\documentclass{amsart}
\usepackage{wrapfig}
\usepackage[dvips]{graphicx}
\usepackage{amsmath,amsthm,amssymb,amscd}
\usepackage{braket}
\theoremstyle{definition}
\newtheorem{thm}{Theorem}[section]
\newtheorem{Def}[thm]{Definition}
\newtheorem{pro}[thm]{Proposition}
\newtheorem{cor}[thm]{Corollary}
\newtheorem{lem}[thm]{Lemma}
\newtheorem{ex}[thm]{Example}
\newtheorem{rem}[thm]{Remark}
\theoremstyle{definition}
\def\c#1{\mathcal {#1}}
\def\m#1{\mathbb {#1}}
\def\r#1{\rm {#1}}

\begin{document}

\title{Fundamental group of $C^*$-algebras with finite dimensional trace space}
\author{Takashi Kawahara}
\address[Takashi Kawahara]{Doctoral student of Mathematics, 
Kyushu University, Ito, 
Fukuoka, 819-0395,  Japan}
\email{t-kawahara@math.kyushu-u.ac.jp}      
\maketitle
\begin{abstract}
We introduce the fundamental group $\r{F}(\c{A})$ of 
a unital $C^*$-algebra $\c{A}$ with finite dimensional trace space.  
The elements of 
fundamental group are restricted by 
K-theoretical obstruction and positivity.
Moreover we show there are uncountably many mutually nonisomorphic simple $C^{*}$-algebras such that $\r{F}(\c{A})=\set{I_{n}}$.  
Our study is due to the results of 
fundamental group by Nawata and Watatani.

\end{abstract}

\section{Introduction}
  We recall some facts of fundamental groups of operator algebras.  
The fundamental group $\r{F}(M)$ of a $\r{II}_1$-factor $\r{M}$ 
with a normalized trace $\tau$ is defined by Murray and von Neumann in \cite{MN}.  
In the paper, the fact that if $M$ is  hyperfinite, then 
$\r{ F}(M) = {\mathbb R_+^{\times}}$ is shown. 
It was proved that $\r{ F}(L(\mathbb{F}_{\infty}))$ of the group factor 
of the free group $\mathbb{F}_{\infty}$ contains the positive rationals by Voiculescu in \cite{Vo} and 
it was shown that 
$\r{ F}(L(\mathbb{F}_{\infty})) = {\mathbb R}_+^{\times}$ by Radulescu in 
\cite{RF}.  The fact that $\r{ F}(L(G))$ is a countable group if $G$ is an ICC group with property (T) is shown by Connes \cite{Co}.  That either countable subgroup of $\mathbb R_+^{\times}$ or any uncountable group belonging to a certain "large" class 
can be realized as the fundamental group of some 
factor of type $\r{ II}_1$ is shown by Popa and Vaes in \cite{Po1} and in \cite{PoVa}.  

\ Nawata and Watatani \cite{NY}, \cite{NY2} introduce the fundamental group of simple $C^*$-algebras 
with unique trace.  Their study is essentially based on the computation  of 
Picard groups by Kodaka \cite{kod1}, \cite{kod2}, \cite{kod3}.  
Nawata defined the fundamental group of non-unital $C^{*}$-algebras \cite{N1} and calculate the Picard group of some projectionless $C^{*}$-algebras with strict comparison by the fundamental groups\cite{N2}.
In this paper, we define the fundamental group of $C^*$-algebras 
with finite dimensional trace space.  
This fundamental group is a "numerical invariant".  
Let $\c{A}$ be a unital $C^*$-algebra 
with finite dimensional bounded trace space.  
We define the fundamental group $\r{ F}(\c{A})$ of $\c{A}$ 
and the determinant fundamental group $\r{ F}_{\r{det}}(\c{A})$ by using self similarity and the extremal points of the bounded trace space of $\c{A}$.  
Then the groups $\r{ F}(\c{A})$ and $\r{ F}_{\r{det}}(\c{A})$ are 
multiplicative subgroups of $GL_{n}(\m{R})$ and ${\m{R}_+^{\times}}$ respectively.  
We shall show that the element of the fundamental group is restricted by K-theorical obstruction and positivity  and we will have that $A=DU(\sigma)$ for some diagonal matrix $D$ and for some permutation unitary $U(\sigma)$ for any $A$ in $\r{ F}(\c{A})$.  
If the unital $C^{*}$-algebras $\c{A}$ and $\c{B}$ with finite dimensional trace space are Morita equivalent, then $\r{ F}(\c{A})=(DU(\sigma))^{-1}\r{ F}(\c{B})(DU(\sigma))$ for some diagonal matrix $D$ and for some permutation unitary $U(\sigma)$.    
Moreover, we compute $\r{ F}(\c{A})$ of 
several $C^*$-algebras $A$. We shall show that given any group $G$ in ${\rm GL}_{2}(\m{R})$ which is isomorphic to $\m{Z}_{2}$ and whose elements have the form $DU(\sigma)$, there exists a simple $AF$-algebra $\c{A}$ such that $\r{ F}(\c{A})=G$.  
Furthermore, we shall show for any $n\in\m{N}$ there exisit uncountably many mutually nonisomorphic simple (non)nuclear unital $C^{*}$-algebras $\c{A}$ with $n$-dimensional trace space such that $\r{ F} (\c{A})=\set{I_{n}}$, where $\r{I}_{n}$ is a unit in $M_{n}(\m{C})$.  
\\
\\
\ We review some of the elementary facts on trace space.  
Let $\c{A}$ be a unital $C^{*}$-algebra.  
A trace $\varphi$ on $\c{A}$ is a linear functional on $\c{A}$ 
which satisfies $\varphi(ab)=\varphi(ba)$ for any $a,b$ in $\c{A}$.  
We denote by $\r{ T}(\c{A})$ the set of bounded traces on $\c{A}$ 
and denote by $\r{ T}(\c{A})^{+}_{1}$ the set of bounded positive traces whose norm is one.    
If $\r{T}(\c{A})\neq \set{0}$, then $\r{ T}(\c{A})^{+}_{1}$ is nonempty compact set of $\c{A}^{*}$ in the weak$^{*}$-topology 
so the set of extreme points of $\r{ T}(\c{A})^{+}_{1}$, denoted by $\partial _{e}\r{T}(\c{A})^{+}_{1}$, is nonempty.  
We suppose $\sharp(\partial_{e}{\rm T}(\c{A})^{+}_{1})=n$ for some $n$ in $\m{N}$ and say $\set{\varphi_{i}}^{n}_{i=1}=\partial_{e}{\rm T}(\c{A})^{+}_{1}$.  
Then $\set{\varphi_{i}}^{n}_{i=1}$ is a basis of $\r{ T}(\c{A})$.  

\section{Hilbert $C^*$-modules, imprimitivity bimodules and Picard groups}\label{sec:picard group} 
We recall some of the standard facts on imprimitivity bimodules.  
(See \cite{Lan}, \cite{MT}, \cite{Rie2}) 
Let ${\mathcal A}$ and ${\mathcal B}$ be unital $C^*$-algebras.  
The $dual\ module$ of an ${\mathcal A}$-${\mathcal B}$ imprimitivity bimodule ${\mathcal E}$, denoted by ${\mathcal E}^{*}$,  
is defined to be a set $\{\xi^*:\xi\in{\mathcal E}\}$ with the operations 
such that $\xi^*+\eta^*=(\xi+\eta)^*$, 
$\lambda\xi^*=(\overline{\lambda}\xi)^*,b\xi^* a
=(a^*\xi b^*)^*,{}_{{\mathcal B}}\langle \xi^*,\eta^* \rangle 
=\langle \xi,\eta \rangle_{{\mathcal B}}$ 
and $\langle \xi^*,\eta^* \rangle_{{\mathcal  A}}
={}_{{\mathcal A}}\langle \xi,\eta \rangle$.  
Then ${\mathcal E}^*$ is a ${\mathcal B}$-${\mathcal A}$ imprimitivity bimodule.  
The $C^{*}$-algebras ${\mathcal A}$ and ${\mathcal B}$ are called $Morita\  equivalent$ 
if there exists an ${\mathcal A}$-${\mathcal B}$ imprimitivity bimodule.  
For ${\mathcal A}$-${\mathcal B}$ imprimitivity bimodules ${\mathcal E}_{1}$ and ${\mathcal E}_{2}$, 
 ${\mathcal E}_{1}$ and ${\mathcal E}_{2}$ are called isomorphic  
if there exists a linear bijective map $\Phi$ from ${\mathcal E}_{1}$ onto ${\mathcal E}_{2}$ 
with the properties such that $\Phi(a\xi b)=a\Phi(\xi)b$, 
${}_{{\mathcal A}}\langle \Phi(\xi),\ \Phi(\eta)\rangle 
={}_{{\mathcal A}}\langle \xi,\eta\rangle$ 
and $\langle \Phi(\xi),\ \Phi(\eta)\rangle_{{\mathcal B}}
=\langle \xi, \eta\rangle_{{\mathcal B}}$ 
for any $a$ in ${\mathcal A}$, 
for any $b$ in ${\mathcal B}$ 
and for any $\xi, \eta$ in ${\mathcal E}_{1}$, 
where ${}_{{\mathcal A}}\langle \cdot ,\cdot \rangle$ and $\langle \cdot , \cdot \rangle_{{\mathcal B}}$ are left and right inner products respectively.
We denote by ${}_{{\mathcal A}}E_{{\mathcal B}}$ 
the set of isomorphic classes $[{\mathcal F}]$ of the ${\mathcal A}$-${\mathcal B}$ imprimitivity bimodule ${\mathcal F}$.\\ 
\ \ We recall some notations on Picard groups of $C^*$-algebras 
introduced by Brown, Green and Rieffel 
in \cite{BGR}.  
The set ${}_{{\mathcal A}}E_{{\mathcal A}}$ forms a group under the product by inner tensor product $\otimes $.  
The group, denoted by $\mathrm{Pic}(\c{A})$, are called $Picard\ group$ of $\c{A}$.  
The identity element of the group $\mathrm{Pic}(\c{A})$ is $[\c{A}]$, where $\c{A}$ is regarded as an ${\mathcal A}$-${\mathcal A}$ imprimitivity bimodule which have obvious left and right actions and the inner products ${}_\c{A}\langle a,b\rangle=ab^{*}$ and $\langle a,b \rangle_{\c{A}}=a^{*}b$.  
The $[\mathcal{E}^{*}]$ is the  
inverse element of $[\mathcal{E}]$ in the Picard group of $\c{A}$. 
Let $\alpha$ be an automorphism on $\c{A}$.  
We denote by $\mathcal{E}_{\alpha}$ the $\c{A}$-$\c{A}$-imprimitivity bimodule which is a set $\c{A}$ with obvious left actions , obvious left $\c{A}$-valued inner product and with following right actions and right $\c{A}$-valued inner product;  
$\xi\cdot a=\xi\alpha(a)$ for 
any $\xi\in\c{A}$, and $a\in \c{A}$,
$\langle\xi ,\eta\rangle_\c{A}=\alpha^{-1} (\xi^*\eta)$ for any $\xi ,\eta\in \c{A}$.  
For $\alpha, \beta\in\mathrm{Aut}(\c{A})$, 
$\mathcal{E}_\alpha$ is isomorphic to $\mathcal{E}_\beta$ if and only if 
there exists a unitary $u \in \c{A}$ such that 
$\alpha = ad \ u \circ \beta $. Moreover, ${\mathcal E}_\alpha \otimes 
{\mathcal E}_\beta$ is 
isomorphic to $\mathcal{E}_{\alpha\circ\beta}$.  We denote by $\rho_{\c{A}}$ the injective homomorphism 
from $\mathrm{Out}(\c{A})$ to $\mathrm{Pic}(\c{A})$ such that $\rho_{\c{A}}(\alpha)=[\c{E}_{\alpha}]$.  
Let $\mathcal{E}$ be an $\c{A}$-$\c{A}$-imprimitivity bimodule.  
If $\c{A}$ is unital, $\mathcal{E}$ has a finite basis $\{\xi_i\}_{i=1}^n$. 
Put $p=(\langle\xi_i,\xi_j\rangle_\c{A})_{ij} \in M_n(\c{A})$. 
Then $p$ is a full projection and $\mathcal{E}$ is isomorphic to 
$p\c{A}^n$ as $\c{A}$-$\c{A}$-imprimitivity bimodule 
with an isomorphism  of $\c{A}$ to  $pM_n(\c{A})p$ as $C^*$-algebra.  
Conversely, we suppose that $p$ is a full projection with an isomorphism $\alpha:\c{A}\rightarrow pM_{k}(\c{A})p$ 
and that the linear span of $\set{a^{*}pb \mid a,b\in {\mathcal A}^{n}}$ is dense in ${\mathcal A}$
(We call such projection $p$ a $self$-$similar\ full$ projection).  
Then $p\c{A}^{n}$ is an $\c{A}$-$\c{A}$-imprimitivity bimodule 
with the operations $a\cdot \xi=\alpha(a)\xi$, $\xi\cdot a=\xi a$.  

\section{Definition of Fundamental group}\label{sec:deffg}
 We define fundamental group for unital $C^*$-algebras 
with finite dimensional bounded trace space.  
The definitions given in the section 3 of \cite{NY} use a self-similariy of $C^*$-algebra.  
In the same way, we define a fundamental group.  
 
\begin{Def}\label{def:s.s}
Let ${\mathcal A}$ be a unital $C^*$-algebra.  
We denote by $(p,\Phi)$ the pair of self-similar full projection $p\in M_{k}(\c{A})$ and isomorphism $\Phi$ from $\c{A}$ onto $pM_{k}(\c{A})p$.  
We call the pair a $self$-$similar\ pair$.  
We abbrevate it as s.s.p.    
\end{Def}

We denote by $\r{Tr}_{k}$ the unnormalized trace on $M_{k}(\m{C})$.  
Let $\c{A}$ be a unital $C^{*}$-algebra, let $\varphi \in \r{ T}(\c{A})$ 
and let $(p,\Phi)$ be a self-similar pair of $\c{A}$.  
Then $(\r{Tr}_{k}\otimes \varphi) \circ \Phi$ is in $\r{ T}(\c{A})$.  
Therefore we can define a bounded linear map $T_{(p,\Phi)}$ on $\r{ T}(\c{A})$ by $T_{(p,\Phi)}(\varphi)=(\r{Tr}_{k}\otimes \varphi) \circ \Phi$.  

\begin{Def}\label{def:ontracesp}
Let $\c{A}$ be a unital $C^{*}$-algebra.  
We define a subset $\r{F}^{tr}({\mathcal A})$ of $\c{L}(\r{ T}(\c{A}))$ as follows;
\[\r{F}^{tr}({\mathcal A}):=\{ T_{(p,\Phi)}\in \c{L}(\r{ T}(\c{A})): (p,\Phi ): \r{ s.s.p}\}\]
\end{Def}

 We denote by $\c{GL}(\r{ T}(\c{A}))$ the set of invetible elements in $\c{L}(\r{ T}(\c{A}))$.  
It forms a group.  
We will show $\r{F}^{tr}({\mathcal A})$ is a subgroup of $\c{GL}(\r{ T}(\c{A}))$ by using a Picard group.  
The following construction generalizes that of the proposition 2.1 of \cite{NY}.  
\begin{pro}\label{pro:def}
Let ${\mathcal A}$ be a unital $C^*$-algebra.  
We define the map $R_{\mathcal A}:\r{ Pic}({\mathcal A})\rightarrow {\mathcal L} (\r{ T}({\mathcal A}))$ 
by $\left(R_{\mathcal A}([{\mathcal E}])(\varphi)\right)(a)=\sum_{i=1}^{k}\varphi(\langle \xi_{i}, a\xi_{i} \rangle_{})$, 
where $\{\xi_{i}\}_{i=1}^{k}$ is a finite basis of ${\mathcal E}$ as a right Hilbert ${\mathcal A}$-module.  
Then $R_{\mathcal A}([{\mathcal E}])$ does not depend on the choice of basis and $R_{\mathcal A}$ is well-defind.  Moreover $R_{\mathcal A}$ is multiplicative map and $R_{\mathcal A}([A])=id_{\r{ T}({\mathcal A})}$.  
\end{pro}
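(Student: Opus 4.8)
The plan is to reduce every assertion to two ingredients: the reconstruction formula for a finite right-module basis $\{\xi_i\}_{i=1}^k$ of $\c{E}$, namely $\eta=\sum_i \xi_i\langle\xi_i,\eta\rangle_{\c{A}}$ for all $\eta\in\c{E}$ (available because $\c{A}$ is unital, as recalled in Section~\ref{sec:picard group}), together with the trace identity $\varphi(xy)=\varphi(yx)$ for $\varphi\in\r{T}(\c{A})$. First I would check that $\psi:=R_{\c{A}}([\c{E}])(\varphi)$ is genuinely a bounded trace, so that it lies in $\r{T}(\c{A})$. Writing $\psi(a)=\sum_i\varphi(\langle\xi_i,a\xi_i\rangle_{\c{A}})$ and expanding $b\xi_i=\sum_j\xi_j\langle\xi_j,b\xi_i\rangle_{\c{A}}$ gives $\psi(ab)=\sum_{i,j}\varphi(\langle\xi_i,a\xi_j\rangle_{\c{A}}\langle\xi_j,b\xi_i\rangle_{\c{A}})$; applying the trace property of $\varphi$ to interchange the two factors and then reassembling via $\sum_i\langle\xi_j,b\xi_i\rangle_{\c{A}}\langle\xi_i,a\xi_j\rangle_{\c{A}}=\langle\xi_j,ba\xi_j\rangle_{\c{A}}$ yields $\psi(ab)=\psi(ba)$. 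Boundedness, and the link to the earlier notation, are immediate from the identification $\psi=(\r{Tr}_k\otimes\varphi)\circ\Phi$ coming from the self-similar full projection $p=(\langle\xi_i,\xi_j\rangle_{\c{A}})_{ij}$ and the isomorphism $\Phi:\c{A}\to pM_k(\c{A})p$ described in Section~\ref{sec:picard group}: under $\c{E}\cong p\c{A}^k$ one computes $\langle\xi_i,a\xi_i\rangle_{\c{A}}=\Phi(a)_{ii}$, so $\psi=T_{(p,\Phi)}(\varphi)$, and $\Phi$ together with $\r{Tr}_k\otimes\varphi$ are bounded.

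Next I would establish independence of the basis. Given a second finite basis $\{\eta_j\}$, I expand $\xi_i=\sum_j\eta_j\langle\eta_j,\xi_i\rangle_{\c{A}}$, obtaining $\sum_i\langle\xi_i,a\xi_i\rangle_{\c{A}}=\sum_{i,j}\langle\xi_i,a\eta_j\rangle_{\c{A}}\langle\eta_j,\xi_i\rangle_{\c{A}}$. Applying $\varphi$, using its trace property to interchange the two factors, and then collapsing $\sum_i\langle\eta_j,\xi_i\rangle_{\c{A}}\langle\xi_i,a\eta_j\rangle_{\c{A}}=\langle\eta_j,a\eta_j\rangle_{\c{A}}$ by reconstruction gives $\sum_i\varphi(\langle\xi_i,a\xi_i\rangle_{\c{A}})=\sum_j\varphi(\langle\eta_j,a\eta_j\rangle_{\c{A}})$, so $R_{\c{A}}([\c{E}])(\varphi)$ is well defined. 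Well-definedness on isomorphism classes is then routine: if $\Psi:\c{E}_1\to\c{E}_2$ is an imprimitivity-bimodule isomorphism it carries a basis to a basis and preserves both the left action and the right inner product, so $\langle\Psi(\xi_i),a\Psi(\xi_i)\rangle_{\c{A}}=\langle\xi_i,a\xi_i\rangle_{\c{A}}$ and the two sums coincide. The identity $R_{\c{A}}([\c{A}])=\r{id}$ follows by taking the one-element basis $\{1\}$ of $\c{A}$, for which $\langle 1,a\cdot 1\rangle_{\c{A}}=a$, whence $R_{\c{A}}([\c{A}])(\varphi)(a)=\varphi(a)$.

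The last and most delicate point is multiplicativity. Here I would use that if $\{\xi_i\}$ and $\{\eta_j\}$ are finite bases of $\c{E}$ and $\c{F}$, then $\{\xi_i\otimes\eta_j\}$ is a finite basis of the interior tensor product $\c{E}\otimes_{\c{A}}\c{F}$, with right inner product $\langle\xi\otimes\eta,\xi'\otimes\eta'\rangle_{\c{A}}=\langle\eta,\langle\xi,\xi'\rangle_{\c{A}}\,\eta'\rangle_{\c{A}}$ and left action $a\cdot(\xi\otimes\eta)=(a\cdot\xi)\otimes\eta$. Substituting these into the defining formula turns $R_{\c{A}}([\c{E}\otimes\c{F}])(\varphi)(a)$ into $\sum_i\big(R_{\c{A}}([\c{F}])(\varphi)\big)(\langle\xi_i,a\xi_i\rangle_{\c{A}})$, and recognising this as $R_{\c{A}}([\c{E}])\big(R_{\c{A}}([\c{F}])(\varphi)\big)(a)$ gives $R_{\c{A}}([\c{E}\otimes\c{F}])=R_{\c{A}}([\c{E}])\circ R_{\c{A}}([\c{F}])$. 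I expect this step to be the main obstacle, for two reasons: one must get the inner-tensor-product inner product and the order of composition exactly right so that $R_{\c{A}}$ comes out multiplicative rather than anti-multiplicative, and the reduction is only legitimate because the intermediate object $R_{\c{A}}([\c{F}])(\varphi)$ is itself a bounded trace on $\c{A}$ — precisely what the first paragraph secures — so that it may be inserted back into $R_{\c{A}}([\c{E}])$. Combined with $R_{\c{A}}([\c{A}])=\r{id}$, this shows $R_{\c{A}}$ is a well-defined multiplicative map sending the identity to the identity.
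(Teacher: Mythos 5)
Your proof is correct and takes essentially the same route as the paper's: basis independence via the reconstruction formula combined with the trace property of $\varphi$, well-definedness because a bimodule isomorphism carries a finite basis to a finite basis while preserving the left action and right inner product, and multiplicativity via the basis $\{\xi_i\otimes\eta_j\}$ of the interior tensor product with the inner product $\langle\xi\otimes\eta,\xi'\otimes\eta'\rangle_{\c{A}}=\langle\eta,\langle\xi,\xi'\rangle_{\c{A}}\eta'\rangle_{\c{A}}$. Your two additions — the explicit check that $R_{\c{A}}([\c{E}])(\varphi)$ is itself a bounded trace (via the identification with $T_{(p,\Phi)}$) and the verification of $R_{\c{A}}([\c{A}])=\mathrm{id}_{\r{T}(\c{A})}$ with the basis $\set{1}$ — are points the paper treats as immediate from the discussion preceding Definition~\ref{def:ontracesp}, so they supplement rather than change the argument.
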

\begin{proof}
Let $\varphi$ be a trace on ${\mathcal A}$, 
$a$ be an element of ${\mathcal A}$, 
${\mathcal E}$ be an ${\mathcal A}-{\mathcal A}$ imprimitivity bimodule and let $\{\xi_{i}\}_{i=1}^{k}$ 
and $\{\eta_{j}\}_{j=1}^{l}$ be finite basis of ${\mathcal E}$.  
Then 
\begin{eqnarray}
\sum_{i=1}^{k}\varphi(\langle\xi_{i}, a\xi_{i} \rangle_{{\mathcal A}})&=&\sum_{i=1}^{k}\varphi(\langle\xi_{i}, \sum_{j=1}^{l}\eta_{j}\langle\eta_{j}, a\xi_{i} \rangle_{{\mathcal A}} \rangle_{{\mathcal A}}) 
= \sum_{i, j=1}^{k, l} \varphi(\langle \xi_{i}, \eta_{j} \rangle_{{\mathcal A}}\langle \eta_{j}, a\xi_{i} \rangle_{{\mathcal A}}) \nonumber \\
&=& \sum_{i, j=1}^{k, l} \varphi(\langle \eta_{j}, a\xi_{i} \rangle_{A}\langle \xi_{i}, \eta_{j} \rangle_{{\mathcal A}}) 
= \sum_{j=1}^{l}\varphi(\langle\eta_{j}, a\eta_{j} \rangle_{{\mathcal A}})\nonumber 
\end{eqnarray}
Therefore $R_{\mathcal A}([{\mathcal E}])$ is independent on the choice of basis.\\  
\ \ Let ${\mathcal E}_{1}$ and ${\mathcal E}_{2}$ be ${\mathcal A}$-${\mathcal A}$ imprimitivity bimodules 
with bases $\{\xi_{i}\}_{i=1}^{k}$ and $\{\zeta_{j}\}_{j=1}^{l}$.  
We suppose that there exists an isomorphism $\Phi$ of ${\mathcal E}_{1}$ onto ${\mathcal E}_{2}$.  
Then $\{\Phi(\xi_{i})\}_{i=1}^{k}$ is also a basis of ${\mathcal E}_{2}$.  
Then 
\begin{eqnarray}
\sum_{i=1}^{k}\varphi(\langle\xi_{i}, a\xi_{i} \rangle_{{\mathcal A}})=\sum_{i=1}^{k}\varphi(\langle\Phi(\xi_{i}), a\Phi(\xi_{i}) \rangle_{{\mathcal A}})
= \sum_{j=1}^{l}\varphi(\langle\zeta_{i}, a\zeta_{i} \rangle_{{\mathcal A}}) \nonumber 
\end{eqnarray}
Therefore $R_{\mathcal A}$ is well-defined.  \\
\ \ We shall show that $R_{\mathcal A}$ is multiplicative.   
Let ${\mathcal E}_{1}$ and ${\mathcal E}_{2}$ be ${\mathcal A}-{\mathcal A}$ imprimitivity  bimodules with bases $\{\xi_{i}\}_{i=1}^{k}$ and $\{\eta_{j}\}_{j=1}^{l}$.  
Then $\{\xi_{i}\otimes\eta_{j}\}_{i, j=1}^{k, l}$ is a basis of ${\mathcal E}_{1}\otimes{\mathcal E}_{2}$ and
\begin{eqnarray}
\left( R_{\mathcal A}([{\mathcal E}_{1}\otimes{\mathcal E}_{2}])\left(\varphi \right) \right) (a)=\sum_{i, j=1}^{k, l}\varphi(\langle \xi_{i}\otimes\eta_{j}, a\xi_{i}\otimes\eta_{j} \rangle)
=\sum_{i, j=1}^{k, l}\varphi(\langle \eta_{j}, \langle\xi_{i}, a\xi_{i} \rangle_{\mathcal A} \eta_{j} \rangle_{{\mathcal A}}) \nonumber
\end{eqnarray}
On the other hand
\begin{eqnarray}
\left(R_{\mathcal A}([{\mathcal E}_{1}])R_{\mathcal A}([{\mathcal E}_{2}])(\varphi)\right)(a) &=& \sum_{i=1}^{k}\left(R_{\mathcal A}([{\mathcal E}_{2}])\left(\varphi \right)\right)(\langle \xi_{i}, a\xi_{i} \rangle_{{\mathcal A}}) \nonumber \\
&=&\sum_{i, j=1}^{k, l}\varphi(\langle \eta_{j}, \langle\xi_{i}, a\xi_{i} \rangle_{{\mathcal A}} \eta_{j} \rangle_{{\mathcal A}}) \nonumber
\end{eqnarray}
Therefore $R_{\mathcal A}$ is multiplicative. 
\end{proof}

\begin{pro}\label{pro:extfundamental group}
Let ${\mathcal A}$ be a unital $C^*$-algebra.  
Then $\r{ F}^{tr}({\mathcal A})=R_{\mathcal  A}(\r{ Pic}({\mathcal A}))$.  
\end{pro}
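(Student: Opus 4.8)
The plan is to prove the two inclusions simultaneously by exhibiting, for each self-similar pair $(p,\Phi)$, an explicit $\c{A}$-$\c{A}$ imprimitivity bimodule $\c{E}_{(p,\Phi)}$ satisfying $R_{\c{A}}([\c{E}_{(p,\Phi)}])=T_{(p,\Phi)}$, and then using the correspondence of Section~\ref{sec:picard group} to see that every class in $\r{Pic}(\c{A})$ arises this way.

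First I would recall from Section~\ref{sec:picard group} the dictionary between self-similar pairs and imprimitivity bimodules. Given a s.s.p.\ $(p,\Phi)$ with $p\in M_k(\c{A})$ and $\Phi\colon\c{A}\to pM_k(\c{A})p$, set $\c{E}_{(p,\Phi)}:=p\c{A}^k$, regarded as a right Hilbert $\c{A}$-module with $\langle\mu,\nu\rangle_{\c{A}}=\sum_i\mu_i^*\nu_i$ and right action $\mu\cdot a=\mu a$, and with left action $a\cdot\mu=\Phi(a)\mu$. By the converse construction recalled there, this is an $\c{A}$-$\c{A}$ imprimitivity bimodule. Conversely, the first construction of Section~\ref{sec:picard group} shows that any imprimitivity bimodule $\c{E}$, via a finite basis $\{\xi_i\}_{i=1}^k$ and the full projection $p=(\langle\xi_i,\xi_j\rangle_{\c{A}})_{ij}$, is isomorphic to some $\c{E}_{(p,\Phi)}$. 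Hence $(p,\Phi)\mapsto[\c{E}_{(p,\Phi)}]$ surjects onto $\r{Pic}(\c{A})$.

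The heart of the argument is then a single computation. Writing $e_1,\dots,e_k$ for the standard basis of $\c{A}^k$, I would check that $\{pe_j\}_{j=1}^k$ is a finite right-module basis of $\c{E}_{(p,\Phi)}$: since $p=p^*=p^2$, one gets $\sum_j(pe_j)\langle pe_j,\mu\rangle_{\c{A}}=p\mu=\mu$ for $\mu\in p\c{A}^k$. Because $R_{\c{A}}([\c{E}])$ is basis-independent by Proposition~\ref{pro:def}, I may use this basis. Then, using $\Phi(a)=p\Phi(a)=\Phi(a)p$,
\begin{eqnarray}
\bigl(R_{\c{A}}([\c{E}_{(p,\Phi)}])(\varphi)\bigr)(a)
&=& \sum_{j=1}^k \varphi\bigl((pe_j)^*\,\Phi(a)\,pe_j\bigr)
= \sum_{j=1}^k \varphi\bigl(e_j^*\,\Phi(a)\,e_j\bigr) \nonumber \\
&=& \sum_{j=1}^k \varphi\bigl(\Phi(a)_{jj}\bigr)
= (\r{Tr}_k\otimes\varphi)(\Phi(a)) = T_{(p,\Phi)}(\varphi)(a), \nonumber
\end{eqnarray}
where $\Phi(a)_{jj}$ denotes the $(j,j)$-entry of $\Phi(a)\in M_k(\c{A})$. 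This identity yields both inclusions at once: every $T_{(p,\Phi)}$ lies in $R_{\c{A}}(\r{Pic}(\c{A}))$, and since $(p,\Phi)\mapsto[\c{E}_{(p,\Phi)}]$ is onto, every $R_{\c{A}}([\c{E}])$ equals some $T_{(p,\Phi)}$ and so lies in $\r{F}^{tr}(\c{A})$.

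The steps requiring care, rather than genuine difficulty, are matching conventions: confirming that the left action $a\cdot\mu=\Phi(a)\mu$ coming from the converse construction of Section~\ref{sec:picard group} is exactly the one producing $(\r{Tr}_k\otimes\varphi)\circ\Phi$, and that the identities $\Phi(a)p=\Phi(a)=p\Phi(a)$ (from $\Phi(a)\in pM_k(\c{A})p$) are applied in the right places. The only conceptual point is the surjectivity of $(p,\Phi)\mapsto[\c{E}_{(p,\Phi)}]$ onto $\r{Pic}(\c{A})$, but this is precisely the content of the two constructions recalled in Section~\ref{sec:picard group}, so it may be invoked directly.
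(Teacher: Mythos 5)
Your proposal is correct and follows essentially the same route as the paper: both directions come from the Section 2 correspondence $\c{E}\leftrightarrow(p,\Phi)$ via $p\c{A}^{k}$, the basis $\{pe_{i}\}_{i=1}^{k}$, and the computation $\sum_{i}\varphi(e_{i}^{*}\Phi(a)e_{i})=(\r{Tr}_{k}\otimes\varphi)(\Phi(a))$. The only cosmetic difference is that you run the computation once and invoke surjectivity of $(p,\Phi)\mapsto[\c{E}_{(p,\Phi)}]$ to get both inclusions, whereas the paper writes out the two (identical) computations separately.
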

\begin{proof}
Let ${\mathcal E}$ be an ${\mathcal A}$-${\mathcal A}$ imprimitivity bimodule 
and let $\{\xi_{i}\}^{k}_{i=1}$ be a basis of ${\mathcal E}$.  
Put $p=(\langle \xi_{i},\xi_{j}\rangle)_{ij}$.  
Then ${\mathcal E}$ is isomorphic to $p{\mathcal A}^{k}$ as an ${\mathcal A}$-${\mathcal A}$ imprimitivity bimodule and there exists an ${}^{*}$-isomomorphism $\alpha:{\mathcal A}\rightarrow pM_{k}({\mathcal A})p$.  
Then
\begin{eqnarray}
(R_{{\mathcal A}}([{\mathcal E}])(\varphi ))(a)&=&\sum^{k}_{i=1}\varphi (\langle \xi_{i},a\xi_{i} \rangle )
= \sum^{k}_{i=1}\varphi (\langle pe_{i}, \alpha (a)pe_{i} \rangle )\nonumber\\
&=& \sum^{k}_{i=1}\varphi ( e^{*}_{i}\alpha (a)e_{i})
= (\r{Tr}_{k}\otimes \varphi)\circ (\alpha)(a)\nonumber
\end{eqnarray}
Therefore $\r{ F}^{tr}({\mathcal A})\supset R_{{\mathcal A}}(\r{ Pic}({\mathcal A}))$.  
Conversely, we suppose that $p$ be a projection with an ${}^{*}$-isomorphism $\alpha:{\mathcal A}\rightarrow pM_{k}({\mathcal A})p$ and that the linear span of $\{a^{*}pb \mid a,b\in {\mathcal A}^{k}\}$ is dense in ${\mathcal A}$.    
Then, $p{\mathcal A}^{k}$ is an ${\mathcal A}$-${\mathcal A}$ imprimitivity bimodule 
with a basis $\{pe_{i}\}^{k}_{i=1}$.  
Then 
\begin{eqnarray}
(\r{Tr}_{k}\otimes \varphi)\circ (\alpha)(a)&=& \sum^{k}_{i=1}\varphi (\langle e^{*}_{i}, \alpha (a)e_{i} \rangle )
= \sum^{k}_{i=1}\varphi (\langle pe_{i}, \alpha (a)pe_{i} \rangle )\nonumber\\
&=&(R_{{\mathcal A}}([p{\mathcal A}^{k}])(\varphi ))(a)\nonumber
\end{eqnarray}
Therefore $\r{F}^{tr}({\mathcal A})\subset$ $R_{{\mathcal A}}(\r{ Pic}({\mathcal A}))$.  
Hence $\r{F}^{tr}({\mathcal A})$ $=$ $R_{{\mathcal A}}(\r{ Pic}({\mathcal A}))$.  
\end{proof}

\begin{pro}
Let $\c{A}$ be a unital $C^{*}$-algebra.  
Then $\r{ F}^{tr}(\c{A})$ is a subgroup of $\c{GL}(\r{ T}(\c{A}))$.  
\end{pro}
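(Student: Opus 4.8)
The plan is to deduce everything from the two preceding propositions, so that the statement becomes an instance of the elementary fact that the image of a group under a multiplicative map is a subgroup, once one checks that this image actually consists of invertible operators. By Proposition~\ref{pro:extfundamental group} we have $\r{F}^{tr}(\c{A})=R_{\c{A}}(\r{Pic}(\c{A}))$, and $\r{Pic}(\c{A})$ is a group; thus it suffices to analyze the image $R_{\c{A}}(\r{Pic}(\c{A}))$ inside the monoid $\c{L}(\r{T}(\c{A}))$.

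First I would confirm that this image lies in $\c{GL}(\r{T}(\c{A}))$. Fix $[\c{E}]\in\r{Pic}(\c{A})$; as recalled in Section~\ref{sec:picard group}, its inverse in the Picard group is $[\c{E}^{*}]$. Using that $R_{\c{A}}$ is multiplicative and that $R_{\c{A}}([\c{A}])=id_{\r{T}(\c{A})}$ (Proposition~\ref{pro:def}), I would compute
\[
R_{\c{A}}([\c{E}])\, R_{\c{A}}([\c{E}^{*}])=R_{\c{A}}([\c{E}]\,[\c{E}^{*}])=R_{\c{A}}([\c{A}])=id_{\r{T}(\c{A})},
\]
and symmetrically $R_{\c{A}}([\c{E}^{*}])\,R_{\c{A}}([\c{E}])=id_{\r{T}(\c{A})}$. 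Hence $R_{\c{A}}([\c{E}])$ is invertible with inverse $R_{\c{A}}([\c{E}^{*}])$, so that $R_{\c{A}}(\r{Pic}(\c{A}))\subset\c{GL}(\r{T}(\c{A}))$.

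It then remains to verify the subgroup axioms for $R_{\c{A}}(\r{Pic}(\c{A}))$ inside the group $\c{GL}(\r{T}(\c{A}))$. The identity $id_{\r{T}(\c{A})}=R_{\c{A}}([\c{A}])$ lies in the image; closure under composition is immediate from multiplicativity, since $R_{\c{A}}([\c{E}_{1}])R_{\c{A}}([\c{E}_{2}])=R_{\c{A}}([\c{E}_{1}\otimes\c{E}_{2}])$; and closure under inverses is exactly the computation above, the inverse of $R_{\c{A}}([\c{E}])$ being $R_{\c{A}}([\c{E}^{*}])\in R_{\c{A}}(\r{Pic}(\c{A}))$. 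Combined with Proposition~\ref{pro:extfundamental group}, this yields the claim.

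The only point requiring care --- and the one I expect to be the real content rather than a routine verification --- is the passage from a multiplicative map into the monoid $\c{L}(\r{T}(\c{A}))$ to a genuine subgroup of $\c{GL}(\r{T}(\c{A}))$: because the codomain is only a monoid, invertibility of the image elements is not automatic and must be produced by hand, which is precisely what the existence of the inverse bimodule $[\c{E}^{*}]$ in $\r{Pic}(\c{A})$ supplies. Everything else reduces to bookkeeping with the multiplicativity and the normalization $R_{\c{A}}([\c{A}])=id_{\r{T}(\c{A})}$ already established in Proposition~\ref{pro:def}.
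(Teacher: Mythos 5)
Your proof is correct and follows essentially the same route as the paper: both identify $\r{F}^{tr}(\c{A})$ with $R_{\c{A}}(\r{Pic}(\c{A}))$ via Proposition~\ref{pro:extfundamental group} and then invoke the multiplicativity of $R_{\c{A}}$ together with $R_{\c{A}}([\c{A}])=id_{\r{T}(\c{A})}$ from Proposition~\ref{pro:def}. The only difference is that you make explicit the step the paper leaves implicit, namely that invertibility of each $R_{\c{A}}([\c{E}])$ in the monoid $\c{L}(\r{T}(\c{A}))$ is supplied by the inverse class $[\c{E}^{*}]$ in the Picard group, which is a worthwhile clarification but not a different argument.
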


\begin{proof}
By \ref{pro:def}, $R_{\c{A}}(\r{ Pic}(\c{A}))$ is a subgroup of $\c{GL}(\r{ T}(\c{A}))$.  
Then, by \ref{pro:extfundamental group}, $\r{ F}^{tr}(\c{A})$ is also.  
\end{proof}

If $\r{ T}(\c{A})$ is finite dimensional, then $\r{ F}^{tr}(\c{A})$ has a matrix representation.   
In the case $\c{A}$ is unital, $T(\c{A})$ has a standard basis $\partial_{e}\r{T}(\c{A})^{+}_{1}$.  
We define the (determinant) fundamental group $\r{F}(\c{A})$ $(\r{F}_{\r{det}}(\c{A}))$ with this basis.   

\begin{Def}\label{def:fund}
Let $\c{A}$ be a unital $C^*$-algebra with finite dimensional bounded trace space.  
We define the fundamental group $\r{F}(\c{A})$ by the matrix representation of $\r{F}^{tr}(\c{A})$ with an indexed set of $\partial_{e}\r{T}(\c{A})^{+}_{1}$.  
The fundamental group is defined up to permutation of the index.  
Moreover, we define the determinant fundamental group $\r{F}_{\r{det}}(\c{A})$ by $|\r{det}|(\r{F}(\c{A}))$ 
where $|\r{det}|(X)=\set{|\r{det}(A)| \mid A\in X}$ for some subset $X$ of $M_{n}(\m{C})$.  
\end{Def} 

This fundamental group measures the elasticized degree of the self-similrity of a $C^{*}$-algebra by its trace space.  
By using the self-similarity, we shall show the following proposition.  

\begin{pro}\label{pro:tensor}
Let $\c{A}$ and $\c{B}$ be $C^{*}$-algebras with finite dimensional bounded trace space 
We suppose $\sharp(\partial_{e}T(\c{A})^{+}_{1})=n$ and that $\sharp(\partial_{e}T(\c{B})^{+}_{1})=m$.  
Say $\set{\varphi_{i}}^{n}_{i=1}=\partial_{e}T(\c{A})^{+}_{1}$ and $\set{\psi_{j}}^{m}_{j=1}=\partial_{e}\r{T}(\c{B})^{+}_{1}$.  \\
Then $\set{\varphi_{1}\otimes \psi_{1},\cdot \cdot \cdot ,\varphi_{n}\otimes \psi_{1},\varphi_{2}\otimes \psi_{1},\cdot \cdot \cdot, \varphi_{n}\otimes \psi_{m}}=\partial_{e}\r{ T}(\c{A}\otimes_{\r{min}}\c{B})^{+}_{1}$.  
Moreover \\ 
$\r{ F}(\c{A}\otimes _{\r{min}}\c{B})\supset \r{ F}(\c{A})\otimes \r{ F}(\c{B})$ 
and $\r{ F}_{\r{det}}(\c{A}\otimes _{\r{min}}\c{B})\supset |\r{det}|(\r{ F}(\c{A})\otimes \r{ F}(\c{B}))$, 
where $A\otimes B=\left[ 
 \begin{array}{ccc}
 Ab_{1,1} & \cdots & Ab_{1,m} \\
 \vdots & \ddots & \vdots \\
 Ab_{m,1} & \cdots & Ab_{m,m} \\
 \end{array} 
 \right]
$ for $A\in M_{n}(\m{C})$ and $(b_{ij})=B\in M_{n}(\m{C})$.  
\end{pro}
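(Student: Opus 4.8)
The plan is to treat the three assertions in order, the first being the substantive one, and to reduce the fundamental-group statements to it through the Picard-group description already available.

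First I would pin down $\partial_e\r{T}(\c{A}\otimes_{\r{min}}\c{B})^{+}_{1}$ by a slicing argument. Given any $\tau\in\r{T}(\c{A}\otimes_{\r{min}}\c{B})$ and a fixed $a\in\c{A}$, the functional $b\mapsto\tau(a\otimes b)$ is a bounded trace on $\c{B}$: boundedness is clear, and the trace identity follows from $a\otimes bb'=(1\otimes b)(a\otimes b')$ together with the trace property of $\tau$. Hence it lies in the linear span of $\set{\psi_j}$. Choosing $b_1,\dots,b_m\in\c{B}$ dual to the independent functionals $\psi_j$, the coefficients $a\mapsto\tau(a\otimes b_j)$ are in turn bounded traces on $\c{A}$ (again by $aa'\otimes b_j=(a\otimes b_j)(a'\otimes 1)$), hence lie in the span of $\set{\varphi_i}$. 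This gives $\tau(a\otimes b)=\sum_{i,j}d_{ij}\varphi_i(a)\psi_j(b)$ for scalars $d_{ij}$, so by density $\tau=\sum_{i,j}d_{ij}\,\varphi_i\otimes\psi_j$. Thus the $nm$ products span $\r{T}(\c{A}\otimes_{\r{min}}\c{B})$, and they are linearly independent because $\set{\varphi_i}$ and $\set{\psi_j}$ are; so $\dim\r{T}(\c{A}\otimes_{\r{min}}\c{B})=nm$.

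Next I would show each product $\varphi_i\otimes\psi_j$ is extreme. Since $\varphi_i$ and $\psi_j$ are extreme, the von Neumann algebras $\pi_{\varphi_i}(\c{A})''$ and $\pi_{\psi_j}(\c{B})''$ are factors; the GNS representation of the product trace on the minimal tensor product is the spatial tensor $\pi_{\varphi_i}\otimes\pi_{\psi_j}$, whose weak closure is $\pi_{\varphi_i}(\c{A})''\,\overline{\otimes}\,\pi_{\psi_j}(\c{B})''$, again a factor. As a tracial state is extreme exactly when its GNS von Neumann algebra is a factor, $\varphi_i\otimes\psi_j$ is extreme. Now the tracial state space is a Choquet simplex of affine dimension $nm-1$ by the previous paragraph, and a finite-dimensional Choquet simplex has exactly $(\dim+1)$ extreme points; having exhibited $nm$ distinct extreme points we conclude $\set{\varphi_i\otimes\psi_j}=\partial_e\r{T}(\c{A}\otimes_{\r{min}}\c{B})^{+}_{1}$, in the stated ordering.

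For the fundamental groups I would pass through Picard groups via Proposition \ref{pro:extfundamental group}. If $\c{E}$ is an $\c{A}$-$\c{A}$ imprimitivity bimodule with finite basis $\set{\xi_k}$ and $\c{F}$ a $\c{B}$-$\c{B}$ imprimitivity bimodule with finite basis $\set{\eta_l}$, then the external tensor product $\c{E}\otimes\c{F}$ over $\c{A}\otimes_{\r{min}}\c{B}$ is an imprimitivity bimodule with basis $\set{\xi_k\otimes\eta_l}$ and inner product $\langle\xi\otimes\eta,\xi'\otimes\eta'\rangle=\langle\xi,\xi'\rangle_{\c{A}}\otimes\langle\eta,\eta'\rangle_{\c{B}}$, so it represents a class in $\r{Pic}(\c{A}\otimes_{\r{min}}\c{B})$. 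A direct computation on a product trace then gives $R_{\c{A}\otimes\c{B}}([\c{E}\otimes\c{F}])(\varphi_i\otimes\psi_j)=R_{\c{A}}([\c{E}])(\varphi_i)\otimes R_{\c{B}}([\c{F}])(\psi_j)$, which in the basis $\set{\varphi_i\otimes\psi_j}$ is precisely the matrix tensor product $A\otimes B$ as defined in the statement, with $A=[R_{\c{A}}([\c{E}])]$ and $B=[R_{\c{B}}([\c{F}])]$. Ranging over all such bimodules yields $\r{F}(\c{A}\otimes_{\r{min}}\c{B})\supset\r{F}(\c{A})\otimes\r{F}(\c{B})$. The determinant assertion is then immediate: applying $|\r{det}|$ to this inclusion and using $\r{F}_{\r{det}}(\cdot)=|\r{det}|(\r{F}(\cdot))$ together with monotonicity of $|\r{det}|$ gives $\r{F}_{\r{det}}(\c{A}\otimes_{\r{min}}\c{B})\supset|\r{det}|(\r{F}(\c{A})\otimes\r{F}(\c{B}))$.

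The main obstacle is the identification of the extreme traces in the first step: one must rule out ``exotic'' extreme traces on the minimal tensor product that are not products. The crucial point is that the slicing argument forces the trace space to be exactly $nm$-dimensional, while the factoriality of the product representations makes the $nm$ products genuinely extreme; the Choquet-simplex dimension count then closes the gap with no room left for other extreme points. By comparison, the matrix computation underlying the fundamental-group inclusion is routine once the basis $\set{\xi_k\otimes\eta_l}$ and the tensor inner product are in place.
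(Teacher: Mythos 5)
Your proposal is correct, and it is strictly more complete than the paper's own proof. For the inclusion $\r{F}(\c{A}\otimes_{\r{min}}\c{B})\supset\r{F}(\c{A})\otimes\r{F}(\c{B})$ your route and the paper's are the same construction in different language: the paper takes self-similar pairs realizing $A$ and $B$, pads the projections to a common matrix size, and uses $p\otimes q$ as a self-similar projection for $\c{A}\otimes_{\r{min}}\c{B}$, while you form the external tensor product $\c{E}\otimes\c{F}$ of the corresponding imprimitivity bimodules and return to $\r{F}$ via $R_{\c{A}\otimes_{\r{min}}\c{B}}$ and Proposition \ref{pro:extfundamental group}; under the correspondence $\c{E}\cong p\c{A}^{k}$ these are literally the same object. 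What you add, and the paper omits, is (i) the verification that the induced map on traces has matrix exactly $A\otimes B$ in the product basis (the paper stops at the isomorphism $(p\otimes q)(\cdots)(p\otimes q)\cong\c{A}\otimes_{\r{min}}\c{B}$ and never computes the action on traces), and (ii) any proof at all of the first assertion, that $\partial_{e}\r{T}(\c{A}\otimes_{\r{min}}\c{B})^{+}_{1}$ consists precisely of the $nm$ product traces, even though that claim is what fixes the basis in which $\r{F}(\c{A}\otimes_{\r{min}}\c{B})$ is represented and so makes the Kronecker-product formula meaningful. Your argument for (ii) — slicing with dual elements $b_j$ to show the products span $\r{T}(\c{A}\otimes_{\r{min}}\c{B})$, factoriality of $\pi_{\varphi_i}(\c{A})''\,\overline{\otimes}\,\pi_{\psi_j}(\c{B})''$ to show each product trace is extreme, and the fact that a finite-dimensional Choquet simplex has exactly $\dim+1$ extreme points to exclude any further extreme traces — is correct and is genuinely needed: dimension counting alone would not do, since a compact convex set of affine dimension $nm-1$ can have more than $nm$ extreme points, so the simplex structure (or the factoriality characterization) is essential. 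This identification of the extreme traces is the substantive content your proof supplies beyond what the paper records.
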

\begin{proof}  
Let $A\otimes B$ be an element of $\r{ F}(\c{A})\otimes \r{ F}(\c{B})$.  
Then there exist a projection $p_{0}$ in $M_{k_{1}}(\c{A})$ and a projection $q_{0}$ in $M_{k_{2}}(\c{B})$ 
such that $p_{0}M_{k_{1}}(\c{A})p_{0}\cong \c{A}$ and that $q_{0}M_{k_{2}}(\c{B})q_{0}\cong \c{B}$.  
Put $k=\r{ max}\set{k_{1},k_{2}}$, $p=\r{diag}(p_{0},0_{k-k_{0}})$ and $q=\r{diag}(q_{1},0_{k-k_{1}})$.  
Then $p\in M_{k}(\c{A})$, $q\in M_{k}(\c{B})$, $pM_{k}(\c{A})p\cong \c{A}$ and $qM_{k}(\c{B})q\cong \c{B}$.  
Therefore $(p\otimes q)(M_{k}(\c{A}\otimes_{\r{\min}} \c{B}))(p\otimes q)\cong \c{A}\otimes_{\r{min}} \c{B}$.  
\end{proof}
\begin{ex}\label{ex:tensor}
Put $\c{A}=M_{2^{\infty}}$ and $\c{B}=\m{C}\oplus \m{C}$.  \\
Then $\r{ F}(\c{A})=\set{2^{n}:n\in \m{Z}}$, $\r{ F}(\c{B})=\Set{
\left[ 
 \begin{array}{cc}
 1 & 0 \\
 0 & 1 \\
 \end{array} 
 \right],\ \left[ 
 \begin{array}{cc}
 0 & 1 \\
 1 & 0 \\
 \end{array} 
 \right]
}$, \\
$\r{ F}(\c{A}\otimes _{\r{min}}\c{B})=\Set{\left[ 
 \begin{array}{cc}
 2^{n} & 0 \\
 0 & 2^{m} \\
 \end{array} 
 \right],\ \left[ 
 \begin{array}{cc}
 0 & 2^{n} \\
 2^{m} & 0 \\
 \end{array} 
 \right]:n,m\in \m{Z}
}$
 , \\
$\r{ F}(\c{A})\otimes \r{ F}(\c{B})=\Set{
\left[ 
 \begin{array}{cc}
 2^{n} & 0 \\
 0 & 2^{n} \\
 \end{array} 
 \right],\ \left[ 
 \begin{array}{cc}
 0 & 2^{n} \\
 2^{n} & 0 \\
 \end{array} 
 \right]:n\in \m{Z}
}$, \\
$\r{F}_{\r{det}}(\c{A}\otimes _{\r{min}}\c{B})=\set{2^n :n\in \m{Z}}$ and 
$|\r{det}|(\r{ F}(\c{A})\otimes \r{ F}(\c{B}))=\set{4^{n}: n\in \m{Z}}$.  \\

Therefore $\r{F}(\c{A}\otimes _{\r{min}}\c{B})\supsetneq \r{ F}(\c{A})\otimes \r{ F}(\c{B})$ and $\r{ F}_{\r{det}}(\c{A}\otimes _{\r{min}}\c{B})\supsetneq |\r{det}|(\r{ F}(\c{A})\otimes \r{ F}(\c{B}))$.  
\end{ex}

 We define the fundamental group and the determinant fundamental group.  
Our definitions agree with 3.2 of \cite{NY}  
if the dimension of the bounded trace space is one.  
Let $\c{A}$ and $\c{B}$ be unital $C^{*}$-algebras with finite dimensional trace space.  
If $\c{A}$ and $\c{B}$ are Morita equivalent, then $\r{F}_{\r{det}}(\c{A})=\r{F}_{\r{det}}(\c{B})$.  
But $\r{ F}(\c{A})$ depends on the permitation of $\partial_{e}\r{T}(\c{A})^{+}_{1}$ if $\r{ dim}\r{ T}(\c{A})>1$.  
Moreover the fundamental groups $\r{ F}(\c{A})$ and $\r{ F}(\c{B})$ of two $C^*$-algebras $\c{A},\c{B}$  
which are Morita equivalent might be different in our definition(see \ref{ex:matrix}).  
In this section, we introduce the concept of being isomorphic and being weightedly isomorphic on the fundamental groups $\r{ F}(\c{A})$
considering the fact.  
We define the canonical unitary representation $U$ from a symmetric group $S_{n}$ into $M_{n}(\mathbb{C})$ 
by $U(\sigma)_{ij}=1$ if $j=\sigma(i)$ and $U(\sigma)_{ij}=0$ if $j\neq \sigma(i)$.  

\begin{Def}\label{def:isomorphism}
Let ${\mathcal A}$ and ${\mathcal B}$ be unital $C^*$-algebras \\
satisfying $\r{ dim}\r{ T}(\c{A})=\r{ dim}\r{ T}(\c{B})=n$.  
We call that $\r{  F}({\mathcal A})$ is isomorphic to $\r{ F}({\mathcal B})$ 
if there exists a permutation $\sigma$ in $S_{n}$ 
such that $\r{ F}({\mathcal B})=(U(\sigma))^{-1}\r{ F}({\mathcal A})(U(\sigma))$.
We call that $\r{ F}({\mathcal A})$ is weightedly isomorphic to $\r{ F}({\mathcal B})$ 
if there exists an invertible positive diagonal matrix $D$ in $M_{n}(\mathbb{C})$ 
and a permutation $\sigma$ in $S_{n}$ 
such that $\r{ F}({\mathcal B})=(DU(\sigma))^{-1}\r{ F}({\mathcal A})(DU(\sigma))$.    
\end{Def}
 We first consider fundamental groups of $C^*$-algebras which are isomorphic.  
\begin{pro}\label{pro:Ciso}
If two $C^*$-algebras, 
which have bounded trace spaces, 
are isomorhic, 
then their fundamental groups are same up to the permutation of basis.   
\end{pro}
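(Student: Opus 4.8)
The plan is to show that a $*$-isomorphism $\theta\colon\c{A}\to\c{B}$ transports the entire self-similar structure of $\c{A}$ to that of $\c{B}$, and that the induced map on trace spaces is exactly a permutation of the distinguished bases. First I would record the map $\theta$ induces on traces: define $\theta_{*}\colon\r{T}(\c{A})\to\r{T}(\c{B})$ by $\theta_{*}(\varphi)=\varphi\circ\theta^{-1}$. Since $\theta$ is an isometric $*$-isomorphism, $\theta_{*}$ is a linear isomorphism preserving positivity and the norm, so it carries $\r{T}(\c{A})^{+}_{1}$ onto $\r{T}(\c{B})^{+}_{1}$ and therefore restricts to a bijection $\partial_{e}\r{T}(\c{A})^{+}_{1}\to\partial_{e}\r{T}(\c{B})^{+}_{1}$. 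Writing $\set{\varphi_{i}}_{i=1}^{n}=\partial_{e}\r{T}(\c{A})^{+}_{1}$ and $\set{\psi_{j}}_{j=1}^{n}=\partial_{e}\r{T}(\c{B})^{+}_{1}$, there is a permutation $\sigma\in S_{n}$ with $\theta_{*}(\varphi_{i})=\psi_{\sigma(i)}$, so in the two distinguished bases $\theta_{*}$ is represented by a permutation matrix $U(\sigma)$, with no positive rescaling because the norm is preserved.

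Next I would transport self-similar pairs. Let $\theta_{k}=\theta\otimes\r{id}$ be the induced $*$-isomorphism $M_{k}(\c{A})\to M_{k}(\c{B})$. Given an s.s.p.\ $(p,\Phi)$ for $\c{A}$, put $q=\theta_{k}(p)$ and $\Psi=\theta_{k}\circ\Phi\circ\theta^{-1}$. Then $\Psi$ is a $*$-isomorphism of $\c{B}$ onto $qM_{k}(\c{B})q$, and applying $\theta$ to the density condition defining self-similarity of $p$ shows $q$ is again self-similar full; hence $(q,\Psi)$ is an s.s.p.\ for $\c{B}$. Using $\theta^{-1}$ gives the inverse assignment, so this is a bijection between the self-similar pairs of $\c{A}$ and those of $\c{B}$.

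The heart of the argument is a short intertwining computation. The entrywise action of $\theta_{k}$ together with the trace identity gives $(\r{Tr}_{k}\otimes(\varphi\circ\theta^{-1}))\circ\theta_{k}=\r{Tr}_{k}\otimes\varphi$, whence
\[
\begin{aligned}
T_{(q,\Psi)}(\theta_{*}(\varphi))
&=(\r{Tr}_{k}\otimes(\varphi\circ\theta^{-1}))\circ\theta_{k}\circ\Phi\circ\theta^{-1}\\
&=(\r{Tr}_{k}\otimes\varphi)\circ\Phi\circ\theta^{-1}
=\theta_{*}(T_{(p,\Phi)}(\varphi)).
\end{aligned}
\]
Thus $T_{(q,\Psi)}=\theta_{*}\,T_{(p,\Phi)}\,\theta_{*}^{-1}$, so conjugation by $\theta_{*}$ carries $\r{F}^{tr}(\c{A})$ onto $\r{F}^{tr}(\c{B})$, bijectively by the symmetric construction for $\theta^{-1}$. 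Passing to matrices in the distinguished bases, conjugation by $\theta_{*}$ becomes conjugation by $U(\sigma)$, giving $\r{F}(\c{B})=U(\sigma)^{-1}\r{F}(\c{A})U(\sigma)$ for a suitable $\sigma\in S_{n}$, which is the desired equality up to permutation of basis. (One may equally phrase Steps two and three through the isomorphism $\r{Pic}(\c{A})\cong\r{Pic}(\c{B})$ induced by $\theta$, combined with $\r{F}^{tr}=R(\r{Pic})$ from \ref{pro:extfundamental group}.)

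The main obstacle, and the only place any care is needed, is Step one: I must ensure that $\theta_{*}$ sends the extreme points of the normalized positive trace space to extreme points \emph{without rescaling}, so that its matrix is a genuine permutation $U(\sigma)$ rather than a weighted permutation $DU(\sigma)$. This is exactly what forces the conclusion ``isomorphic'' rather than merely ``weightedly isomorphic'', and it rests on $\theta_{*}$ preserving the norm on $\r{T}(\c{A})$; everything else (the transport of pairs and the intertwining) is a routine functoriality check.
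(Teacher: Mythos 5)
Your proposal is correct and takes essentially the same route as the paper: the paper's entire proof is your Step one, namely that the isomorphism carries $\partial_{e}\r{T}(\c{A})^{+}_{1}$ bijectively onto $\partial_{e}\r{T}(\c{B})^{+}_{1}$ (so the induced map on trace spaces is a permutation matrix in the distinguished bases), with the transport of self-similar pairs and the intertwining $T_{(q,\Psi)}=\theta_{*}T_{(p,\Phi)}\theta_{*}^{-1}$ left implicit. Your write-up simply spells out the routine functoriality that the paper takes for granted.
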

\begin{proof}
Let ${\mathcal A}$ and ${\mathcal B}$ be unital $C^*$-algebras which have bounded trace spaces.  
We suppose $\sharp(\partial_{e}\r{T}(\c{A})^{+}_{1})=n$.  
Say $\{\varphi_{i}\}^{n}_{i=1}=\partial_{e}\r{T}(\c{A})^{+}_{1}$.     
If ${\mathcal A}$ and ${\mathcal B}$ are isomorphic, 
there exists an isomorphism $\alpha:{\mathcal A}\rightarrow{\mathcal B}$.  
Then $\{\varphi_{i}\circ \alpha \}^{n}_{i=1}=\partial_{e}\r{T}(\c{B})^{+}_{1}$.  
\end{proof}
 We next show that if two unital $C^*$-algebras $\c{A}$ and $\c{B}$ are Morita equivalent, 
then their fundamental groups $\r{ F}(\c{A})$ and $\r{ F}(\c{B})$ are weightedly isomorphic and $\r{ F}_{\r{det}}(\c{A})=\r{ F}_{\r{det}}(\c{B})$.   
Let ${\mathcal A}$ and ${\mathcal B}$ be unital $C^*$-algebras which have bounded trace spaces.  
If ${\mathcal A}$ and ${\mathcal B}$ are Morita equivalent, 
then there exists an ${\mathcal A}$-${\mathcal B}$ imprimitivity bimodule ${\mathcal F}$.  
We define the linear map 
$R_{{\mathcal A}{\mathcal B}}:{}_{\mathcal A}E_{\mathcal B}\rightarrow {\mathcal L}(\r{ T}({\mathcal B}),\r{ T}({\mathcal A}))$ 
as $(R_{{\mathcal A}{\mathcal B}}([{\mathcal F}])(\varphi))(a)
=\sum_{i=1}^{k}\varphi(\langle \xi_{i}, a\xi_{i} \rangle_{{\mathcal B}})$, 
where $\{\xi_{i}\}_{i=1}^{k}$ be the basis of ${\mathcal F}$ as a right Hilbert ${\mathcal B}$-module.    
\begin{lem}\label{lem:hom}
Let ${\mathcal A}$, ${\mathcal B}$ and ${\mathcal C}$ be unital $C^*$-algebras which have bounded trace spaces.  
We suppose ${\mathcal A}$, ${\mathcal B}$ and ${\mathcal C}$ are Morita equivalent.  
Let ${\mathcal E}$ be an imprimitivity ${\mathcal A}$-${\mathcal B}$ bimodule 
and let ${\mathcal F}$ be an imprimitivity ${\mathcal B}$-${\mathcal C}$ bimodule.  
Then $R_{{\mathcal A}{\mathcal B}}([{\mathcal E}])R_{{\mathcal B}{\mathcal C}}([{\mathcal F}])
=R_{{\mathcal A}{\mathcal C}}([{\mathcal E}\otimes{\mathcal F}])$.  
In particular,  
$R_{{\mathcal A}{\mathcal B}}([{\mathcal F}])R_{{\mathcal B}{\mathcal A}}([{\mathcal F}^*])=id_{\r{ T}({\mathcal A})}$, $R_{{\mathcal B}{\mathcal A}}([{\mathcal F}^*])R_{{\mathcal A}{\mathcal B}}([{\mathcal F}])=id_{\r{ T}({\mathcal B})}$ and $\r{ dim}\r{ T}(\c{A})=\r{ dim}\r{ T}(\c{B})$.   
\end{lem}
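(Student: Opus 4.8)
The plan is to derive everything from the first identity $R_{\mathcal{A}\mathcal{B}}([\mathcal{E}])R_{\mathcal{B}\mathcal{C}}([\mathcal{F}]) = R_{\mathcal{A}\mathcal{C}}([\mathcal{E}\otimes\mathcal{F}])$, which is the three-algebra analogue of the multiplicativity computation in Proposition \ref{pro:def}. First I would fix a basis $\{\xi_i\}_{i=1}^{k}$ of $\mathcal{E}$ as a right Hilbert $\mathcal{B}$-module and a basis $\{\eta_j\}_{j=1}^{l}$ of $\mathcal{F}$ as a right Hilbert $\mathcal{C}$-module, and recall the structure of the inner tensor product $\mathcal{E}\otimes_{\mathcal{B}}\mathcal{F}$: the family $\{\xi_i\otimes\eta_j\}_{i,j}$ is a basis as a right Hilbert $\mathcal{C}$-module, the left $\mathcal{A}$-action is $a(\xi\otimes\eta)=(a\xi)\otimes\eta$, and the $\mathcal{C}$-valued inner product is $\langle \xi_1\otimes\eta_1,\ \xi_2\otimes\eta_2\rangle_{\mathcal{C}} = \langle \eta_1,\ \langle \xi_1,\xi_2\rangle_{\mathcal{B}}\,\eta_2\rangle_{\mathcal{C}}$. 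The basis independence of $R_{\mathcal{A}\mathcal{B}}$ is exactly the computation carried out in the first paragraph of the proof of Proposition \ref{pro:def}, now phrased with the $\mathcal{B}$-valued inner product, so each $R$ in sight is unambiguous.

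Next, for $\varphi\in\mathrm{T}(\mathcal{C})$ and $a\in\mathcal{A}$, I would evaluate both sides and compare. On one side, $\bigl(R_{\mathcal{A}\mathcal{C}}([\mathcal{E}\otimes\mathcal{F}])(\varphi)\bigr)(a)=\sum_{i,j}\varphi\bigl(\langle \xi_i\otimes\eta_j,\ a(\xi_i\otimes\eta_j)\rangle_{\mathcal{C}}\bigr)=\sum_{i,j}\varphi\bigl(\langle \eta_j,\ \langle \xi_i,a\xi_i\rangle_{\mathcal{B}}\,\eta_j\rangle_{\mathcal{C}}\bigr)$, using the action and inner-product formulas above. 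On the other side, since $\langle \xi_i,a\xi_i\rangle_{\mathcal{B}}\in\mathcal{B}$, unwinding the composition in the stated order gives $\bigl(R_{\mathcal{A}\mathcal{B}}([\mathcal{E}])\bigl(R_{\mathcal{B}\mathcal{C}}([\mathcal{F}])(\varphi)\bigr)\bigr)(a)=\sum_{i}\bigl(R_{\mathcal{B}\mathcal{C}}([\mathcal{F}])(\varphi)\bigr)\bigl(\langle \xi_i,a\xi_i\rangle_{\mathcal{B}}\bigr)=\sum_{i,j}\varphi\bigl(\langle \eta_j,\ \langle \xi_i,a\xi_i\rangle_{\mathcal{B}}\,\eta_j\rangle_{\mathcal{C}}\bigr)$. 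The two expressions coincide, which proves the first identity.

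For the \emph{in particular} clauses I would specialize. Taking $\mathcal{C}=\mathcal{A}$, the first bimodule to be $\mathcal{F}$ (viewed as an $\mathcal{A}$-$\mathcal{B}$ bimodule) and the second to be $\mathcal{F}^{*}$ (a $\mathcal{B}$-$\mathcal{A}$ bimodule), the identity yields $R_{\mathcal{A}\mathcal{B}}([\mathcal{F}])R_{\mathcal{B}\mathcal{A}}([\mathcal{F}^{*}])=R_{\mathcal{A}\mathcal{A}}([\mathcal{F}\otimes_{\mathcal{B}}\mathcal{F}^{*}])$. Since $[\mathcal{F}^{*}]$ is the inverse of $[\mathcal{F}]$, we have $\mathcal{F}\otimes_{\mathcal{B}}\mathcal{F}^{*}\cong\mathcal{A}$ as an $\mathcal{A}$-$\mathcal{A}$ imprimitivity bimodule, so the right-hand side equals $R_{\mathcal{A}\mathcal{A}}([\mathcal{A}])=\mathrm{id}_{\mathrm{T}(\mathcal{A})}$ by Proposition \ref{pro:def}. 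Symmetrically, $\mathcal{F}^{*}\otimes_{\mathcal{A}}\mathcal{F}\cong\mathcal{B}$ gives $R_{\mathcal{B}\mathcal{A}}([\mathcal{F}^{*}])R_{\mathcal{A}\mathcal{B}}([\mathcal{F}])=\mathrm{id}_{\mathrm{T}(\mathcal{B})}$. Thus $R_{\mathcal{A}\mathcal{B}}([\mathcal{F}])$ is a linear isomorphism $\mathrm{T}(\mathcal{B})\to\mathrm{T}(\mathcal{A})$ with inverse $R_{\mathcal{B}\mathcal{A}}([\mathcal{F}^{*}])$, whence $\dim\mathrm{T}(\mathcal{A})=\dim\mathrm{T}(\mathcal{B})$.

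I do not expect a serious obstacle here: the core computation is the verbatim three-algebra version of Proposition \ref{pro:def}. The only points demanding care are the bookkeeping of which inner product ($\mathcal{B}$-valued versus $\mathcal{C}$-valued) and which left action enter at each stage, and invoking the standard imprimitivity-bimodule isomorphisms $\mathcal{F}\otimes\mathcal{F}^{*}\cong\mathcal{A}$ and $\mathcal{F}^{*}\otimes\mathcal{F}\cong\mathcal{B}$ to close the argument; the dimension equality is then immediate from the existence of mutually inverse linear maps between the trace spaces.
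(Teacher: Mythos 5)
Your proposal is correct and takes essentially the same route as the paper: the paper's own proof is just the remark that, as in Proposition \ref{pro:def}, the maps $R_{\mathcal{A}\mathcal{B}}$ are well defined and the identities follow, and your argument is precisely the fleshed-out three-algebra version of that computation. The derivation of the \emph{in particular} clauses from $\mathcal{F}\otimes_{\mathcal{B}}\mathcal{F}^{*}\cong\mathcal{A}$ and $\mathcal{F}^{*}\otimes_{\mathcal{A}}\mathcal{F}\cong\mathcal{B}$, and of the dimension equality from the resulting mutually inverse linear maps, is exactly what the paper leaves implicit.
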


\begin{proof}
As in the same proof \ref{pro:def}, 
$R_{{\mathcal A}{\mathcal B}}([{\mathcal F}])$ is independent of the choice of the basis of ${\mathcal F}$, 
$R_{{\mathcal A}{\mathcal B}}$ is well-defined, 
and next proposition follows.  
\end{proof}

Especially,  $R_{{\mathcal A}{\mathcal A}}([{\mathcal E}])=R_{\mathcal A}([{\mathcal E}])$ 
where ${\mathcal E}$ is a ${\mathcal A}$-${\mathcal A}$ imprimitivity bimodule.  

\begin{pro}\label{pro:morita}
Let ${\mathcal A}$ and ${\mathcal B}$ be unital $C^*$-algebras with finite dimensional bounded trace space.  
If ${\mathcal A}$ and ${\mathcal B}$ are Morita equivalent,  
then $\r{ F}({\mathcal A})$ is weightedly isomorphic to $\r{ F}({\mathcal B})$ and $\r{ F}_{\r{det}}(\c{A})=\r{ F}_{\r{det}}(\c{B})$.  
\end{pro}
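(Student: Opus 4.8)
\emph{The plan is} to fix an $\mathcal{A}$-$\mathcal{B}$ imprimitivity bimodule $\mathcal{F}$ and use it to conjugate the entire Picard group, then read off what the resulting conjugating operator does to the standard bases of extreme traces. Set $W := R_{\mathcal{A}\mathcal{B}}([\mathcal{F}]) \in \mathcal{L}(\mathrm{T}(\mathcal{B}),\mathrm{T}(\mathcal{A}))$. By Lemma \ref{lem:hom}, $W$ is invertible with $W^{-1} = R_{\mathcal{B}\mathcal{A}}([\mathcal{F}^{*}])$, and $\dim\mathrm{T}(\mathcal{A}) = \dim\mathrm{T}(\mathcal{B}) =: n$.

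First I would establish the conjugation relation on the level of trace spaces. For any $\mathcal{A}$-$\mathcal{A}$ imprimitivity bimodule $\mathcal{E}$, the module $\mathcal{F}^{*}\otimes\mathcal{E}\otimes\mathcal{F}$ is a $\mathcal{B}$-$\mathcal{B}$ imprimitivity bimodule, and two applications of Lemma \ref{lem:hom} (with $R_{\mathcal{B}\mathcal{B}} = R_{\mathcal{B}}$) give
\[
R_{\mathcal{B}}([\mathcal{F}^{*}\otimes\mathcal{E}\otimes\mathcal{F}]) = R_{\mathcal{B}\mathcal{A}}([\mathcal{F}^{*}])\, R_{\mathcal{A}}([\mathcal{E}])\, R_{\mathcal{A}\mathcal{B}}([\mathcal{F}]) = W^{-1} R_{\mathcal{A}}([\mathcal{E}]) W .
\]
Since $[\mathcal{E}]\mapsto[\mathcal{F}^{*}\otimes\mathcal{E}\otimes\mathcal{F}]$ is a bijection of $\mathrm{Pic}(\mathcal{A})$ onto $\mathrm{Pic}(\mathcal{B})$ (its inverse is $[\mathcal{G}]\mapsto[\mathcal{F}\otimes\mathcal{G}\otimes\mathcal{F}^{*}]$), Proposition \ref{pro:extfundamental group} yields $\mathrm{F}^{tr}(\mathcal{B}) = R_{\mathcal{B}}(\mathrm{Pic}(\mathcal{B})) = W^{-1}\,\mathrm{F}^{tr}(\mathcal{A})\,W$.

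The heart of the argument is to identify the matrix of $W$ in the standard bases. The map $W$ is positive: if $\varphi\in\mathrm{T}(\mathcal{B})^{+}$ and $a = b^{*}b \geq 0$ in $\mathcal{A}$, then each $\langle\xi_{i}, a\xi_{i}\rangle_{\mathcal{B}} = \langle b\xi_{i}, b\xi_{i}\rangle_{\mathcal{B}} \geq 0$, so $(W\varphi)(a)\geq 0$; the same holds for $W^{-1} = R_{\mathcal{B}\mathcal{A}}([\mathcal{F}^{*}])$. Hence $W$ is an order isomorphism of the cone $\mathrm{T}(\mathcal{B})^{+}$ onto $\mathrm{T}(\mathcal{A})^{+}$. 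Because $\partial_{e}\mathrm{T}(\mathcal{A})^{+}_{1} = \{\varphi_{i}\}_{i=1}^{n}$ is finite and a basis of $\mathrm{T}(\mathcal{A})$, the cone $\mathrm{T}(\mathcal{A})^{+}$ is the simplicial cone $\{\sum_{i} c_{i}\varphi_{i} : c_{i}\geq 0\}$ whose extreme rays are exactly the $\mathbb{R}_{\geq 0}\varphi_{i}$, and likewise $\mathrm{T}(\mathcal{B})^{+}$ for $\{\psi_{j}\}$. An order isomorphism carries extreme rays bijectively to extreme rays, so there are a permutation $\sigma\in S_{n}$ and scalars $\lambda_{j} > 0$ with $W\psi_{j} = \lambda_{j}\varphi_{\sigma(j)}$. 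Consequently the matrix $M$ of $W$ from the $\{\psi_{j}\}$-basis to the $\{\varphi_{i}\}$-basis has the form $M = DU(\sigma)$ for an invertible positive diagonal $D$ and permutation unitary $U(\sigma)$ (up to the routine reindexing needed to move the diagonal past the permutation).

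Finally I would transport the conjugation to matrices. Writing $\mathrm{F}^{tr}(\mathcal{B}) = W^{-1}\mathrm{F}^{tr}(\mathcal{A})W$ in the standard bases gives $\mathrm{F}(\mathcal{B}) = M^{-1}\mathrm{F}(\mathcal{A})M = (DU(\sigma))^{-1}\mathrm{F}(\mathcal{A})(DU(\sigma))$, which is precisely the weighted isomorphism of Definition \ref{def:isomorphism}. For the determinant group, multiplicativity of $\det$ gives $|\det(M^{-1}SM)| = |\det S|$ for every $S$, so $\mathrm{F}_{\mathrm{det}}(\mathcal{B}) = |\det|(\mathrm{F}(\mathcal{B})) = |\det|(\mathrm{F}(\mathcal{A})) = \mathrm{F}_{\mathrm{det}}(\mathcal{A})$. \emph{The main obstacle} I anticipate is the third step --- justifying that $W$ preserves extreme rays --- which rests on $\mathrm{T}(\mathcal{A})^{+}$ and $\mathrm{T}(\mathcal{B})^{+}$ being simplicial cones spanned by their finitely many extreme traces and on $W$ being a genuine cone isomorphism (positive with positive inverse). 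Everything else is formal from Lemma \ref{lem:hom} and Proposition \ref{pro:extfundamental group}.
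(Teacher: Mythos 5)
Your proposal is correct, and its skeleton coincides with the paper's: fix $\mathcal{F}$, use Lemma \ref{lem:hom} to get $R_{\mathcal{B}}([\mathcal{F}^{*}\otimes\mathcal{E}\otimes\mathcal{F}])=R_{\mathcal{B}\mathcal{A}}([\mathcal{F}^{*}])R_{\mathcal{A}}([\mathcal{E}])R_{\mathcal{A}\mathcal{B}}([\mathcal{F}])$, identify the matrix of $W=R_{\mathcal{A}\mathcal{B}}([\mathcal{F}])$ as $DU(\sigma)$, and conclude by conjugation plus invariance of $|\det|$. Where you genuinely diverge is the key step. The paper deduces that the matrices of $W$ and $W^{-1}$ in the extreme-trace bases have nonnegative entries (because both maps preserve positivity of traces) and then invokes the purely combinatorial Lemma \ref{lem:positivematrix}: an invertible entrywise-nonnegative matrix with entrywise-nonnegative inverse is a positive diagonal times a permutation. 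You instead argue convex-geometrically: $W$ is an order isomorphism between the cones $\mathrm{T}(\mathcal{B})^{+}$ and $\mathrm{T}(\mathcal{A})^{+}$, these cones are simplicial with extreme rays $\mathbb{R}_{\geq 0}\psi_{j}$ and $\mathbb{R}_{\geq 0}\varphi_{i}$, and extreme rays go to extreme rays, forcing $W\psi_{j}=\lambda_{j}\varphi_{\sigma(j)}$. This is precisely the alternative the paper itself sketches in Remark \ref{rem:extreme ray} (there stated for Proposition \ref{pro:Krelation}). The trade-off: the paper's route recycles a lemma it needs anyway and keeps the proof short, but it silently uses the simplicial-cone fact when asserting entrywise nonnegativity of the matrix (a positive trace has nonnegative coordinates in the basis $\partial_{e}\mathrm{T}(\mathcal{A})^{+}_{1}$ exactly because $\mathrm{T}(\mathcal{A})^{+}_{1}$ is the convex hull of its finitely many extreme points); your route makes that convexity input explicit and bypasses the matrix lemma entirely, at the cost of invoking Minkowski/Krein--Milman and the preservation of extreme rays under cone isomorphisms. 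Both are complete and correct.
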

\begin{proof}
Let ${\mathcal F}$ be an ${\mathcal A}$-${\mathcal B}$ imprimitivity bimodule.  
Then ${\mathcal F}$ induces an isomorphism ${\Psi}$ of $\r{ Pic}({\mathcal A})$ to $\r{ Pic}({\mathcal B})$ 
such that ${\Psi}([{\mathcal E}])=[{\mathcal F}^*\otimes{\mathcal E}\otimes{\mathcal F}]$ 
for $[{\mathcal E}]\in \r{ Pic}({\mathcal A})$.  By \ref{lem:hom}, for $[{\mathcal E}] \in \r{ Pic}({\mathcal A})$ 
\begin{eqnarray} 
R_{\mathcal B}([{\mathcal F}^*\otimes{\mathcal E}\otimes{\mathcal F}])&=&R_{{\mathcal B}{\mathcal B}}([{\mathcal F}^*\otimes{\mathcal E}\otimes{\mathcal F}]) \nonumber \\
&=&R_{{\mathcal B}{\mathcal A}}([{\mathcal F}^*])R_{{\mathcal A}{\mathcal A}}([{\mathcal E}])R_{{\mathcal A}{\mathcal B}}([{\mathcal F}]) \nonumber \\
&=&R_{{\mathcal B}{\mathcal A}}([{\mathcal F}^*])R_{\mathcal A}([{\mathcal E}])R_{{\mathcal A}{\mathcal B}}([{\mathcal F}]) \nonumber
\end{eqnarray}
Put $\r{ dim}(\r{ T}({\mathcal A}))=\r{ dim}(\r{ T}({\mathcal B}))=n$.  
We shall consider the representation of the previous formula into $M_{n}(\mathbb{C})$ 
with the basis $\partial_{e}\r{T}(\c{A})^{+}_{1}$ of $\r{ T}({\mathcal A})$ 
and with the basis $\partial_{e}\r{T}(\c{B})^{+}_{1}$ of $\r{ T}({\mathcal B})$.  
Then the representation matrices of $R_{\mathcal A}([{\mathcal E}])$ 
and $R_{\mathcal B}([\Psi({\mathcal E})])$ are $S_{\mathcal A}(R_{\mathcal A}([{\mathcal E}]))$ 
and $S_{\mathcal A}(R_{\mathcal B}([\Psi({\mathcal E})]))$ respectively.  
By definition of $R_{{\mathcal A}{\mathcal B}}({\mathcal F})$ 
and $R_{{\mathcal B}{\mathcal A}}({\mathcal F}^*)$, 
they preserve positiveness of traces, 
so the representation matrix elements of them are positive.  
Moreover each of them are inverse element of the other by \ref{lem:hom}.  
Therefore the representation matrix of $R_{{\mathcal A}{\mathcal B}}({\mathcal F})$ has the form of \ref{lem:positivematrix}.  
Hence there exists a positive invetible diagonal matrix $D$ 
and exists $\sigma$ in $S_{n}$ 
such that $S_{\mathcal B}(R_{\mathcal B}([\Psi({\mathcal E})]))
=(DU(\sigma))^{-1}S_{\mathcal A}(R_{\mathcal A}({\mathcal F}))(DU(\sigma))$.  
Furthermore, since $\r{det}(P^{-1}AP)=\r{det}(A)$ for $P\in GL_{n}(\m{C})$ 
and for any $A\in M_{n}(\m{C})$, $\r{ F}_{\r{det}}(\c{A})=\r{ F}_{\r{det}}(\c{B})$.        
\end{proof}

\section{Forms of fundamental groups and some example}

 We shall show that $\r{F}({\mathcal A})$ is restricted by K-theoretical obstruction and positivity.  
This computation was motivated by proposition 3.7 of \cite{NY}.  
We denote by $\varphi^*$ the map from $K_{0}({\mathcal A})$ into $\mathbb{R}$ 
induced by a bounded trace $\varphi$ on ${\mathcal A}$.  
Furthermore we denote by $H({\mathcal A})$ the set of isomorphic classes $[\chi]$ 
of right Hilbert ${\mathcal A}$-modules $\chi$ with finite basis $\set{\xi_{i}}^{k}_{i=1}$.  
We define a pairing $\braket{\cdot ,\cdot }:H({\mathcal A})\times \r{ T}(\c{A})\rightarrow \m{C}$ 
by $\braket{[\chi],\varphi}=\sum^{k}_{i=1}\varphi(\braket{\xi_{i},\xi_{i}}_{\c{A}})$.  

\begin{pro}
Let ${\mathcal A}$ be a unital $C^*$-algebra with finite dimensional bounded trace space.  
Then 
\begin{eqnarray}
\braket{[\chi\otimes \c{E}],\varphi}=\braket{[\chi], R_{\c{A}}([\c{E}])(\varphi)}\nonumber
\end{eqnarray} 
for any right Hilbert ${\mathcal A}$-modules $\chi$ with finite basis, 
for any ${\mathcal A}$-${\mathcal A}$ imprimitivity bimodule $\c{E}$ and 
for any $\varphi\in \r{ T}(\c{A})$.  
\end{pro}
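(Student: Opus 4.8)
The plan is to establish the identity by a direct computation, expanding both sides over finite bases and matching the resulting double sums. First I would fix a finite basis $\{\xi_{i}\}_{i=1}^{k}$ of $\chi$ and a finite basis $\{\eta_{j}\}_{j=1}^{l}$ of $\c{E}$, each as a right Hilbert $\c{A}$-module. Then $\{\xi_{i}\otimes \eta_{j}\}_{i,j}$ is a finite basis of the interior tensor product $\chi\otimes \c{E}$; this is exactly the fact already exploited in the proof of Proposition \ref{pro:def} when verifying the multiplicativity of $R_{\c{A}}$.

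For the left-hand side I would invoke the defining formula for the inner product of the interior tensor product, namely $\langle \xi\otimes \eta,\ \xi'\otimes \eta'\rangle_{\c{A}}=\langle \eta,\ \langle \xi,\xi'\rangle_{\c{A}}\,\eta'\rangle_{\c{A}}$, where $\langle \xi,\xi'\rangle_{\c{A}}\in\c{A}$ acts on $\eta'$ through the left $\c{A}$-action of $\c{E}$. Taking $\xi=\xi'=\xi_{i}$ and $\eta=\eta'=\eta_{j}$ yields
\begin{eqnarray}
\braket{[\chi\otimes \c{E}],\varphi}=\sum_{i,j}\varphi\left(\langle \xi_{i}\otimes \eta_{j},\ \xi_{i}\otimes \eta_{j}\rangle_{\c{A}}\right)=\sum_{i,j}\varphi\left(\langle \eta_{j},\ \langle \xi_{i},\xi_{i}\rangle_{\c{A}}\,\eta_{j}\rangle_{\c{A}}\right).\nonumber
\end{eqnarray}
For the right-hand side I would unwind the pairing applied to $\chi$ and the trace $R_{\c{A}}([\c{E}])(\varphi)$, and then substitute the definition of $R_{\c{A}}([\c{E}])(\varphi)$ evaluated at $a=\langle \xi_{i},\xi_{i}\rangle_{\c{A}}$:
\begin{eqnarray}
\braket{[\chi],\ R_{\c{A}}([\c{E}])(\varphi)}=\sum_{i}\left(R_{\c{A}}([\c{E}])(\varphi)\right)\left(\langle \xi_{i},\xi_{i}\rangle_{\c{A}}\right)=\sum_{i,j}\varphi\left(\langle \eta_{j},\ \langle \xi_{i},\xi_{i}\rangle_{\c{A}}\,\eta_{j}\rangle_{\c{A}}\right).\nonumber
\end{eqnarray}
The two double sums coincide, which proves the claim.

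Since the computation is routine, the only points requiring care are the well-definedness of each quantity and the correct use of the tensor-product inner product. I would note that the pairing $\braket{[\chi],\cdot}$ is independent of the chosen basis of $\chi$ by the trace identity $\varphi(\langle \xi_{i},\eta_{j}\rangle_{\c{A}}\langle \eta_{j},\xi_{i}\rangle_{\c{A}})=\varphi(\langle \eta_{j},\xi_{i}\rangle_{\c{A}}\langle \xi_{i},\eta_{j}\rangle_{\c{A}})$, exactly as in Proposition \ref{pro:def}, and that $R_{\c{A}}([\c{E}])$ is well-defined by that same proposition, so both sides are unambiguous. Thus no deeper obstacle arises: the substance is the bookkeeping that matches the expansion of $\braket{[\chi\otimes\c{E}],\varphi}$ over the product basis with the expansion of $\braket{[\chi],\ R_{\c{A}}([\c{E}])(\varphi)}$, and the single step that genuinely has to be gotten right is the invocation of $\langle \xi\otimes\eta,\ \xi'\otimes\eta'\rangle_{\c{A}}=\langle \eta,\ \langle\xi,\xi'\rangle_{\c{A}}\,\eta'\rangle_{\c{A}}$ together with the $\c{A}$-linearity of $\varphi$.
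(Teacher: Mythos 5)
Your proof is correct and follows essentially the same route as the paper's: both expand over the product basis $\{\xi_{i}\otimes\eta_{j}\}$ of $\chi\otimes\c{E}$, apply the interior tensor product inner product formula $\langle \xi\otimes\eta,\ \xi'\otimes\eta'\rangle_{\c{A}}=\langle \eta,\ \langle\xi,\xi'\rangle_{\c{A}}\,\eta'\rangle_{\c{A}}$, and observe that both sides of the identity reduce to $\sum_{i,j}\varphi(\langle \eta_{j},\ \langle \xi_{i},\xi_{i}\rangle_{\c{A}}\,\eta_{j}\rangle_{\c{A}})$. The paper compresses this into a single displayed equation, whereas you spell out the intermediate steps and the well-definedness of each side; the content is identical.
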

\begin{proof}
Let $\set{\xi_{i}}^{k}_{i=1}$ be a finite basis of $\chi$ 
and let $\set{\eta_{j}}^{l}_{j=1} $ be a finite basis of $\c{E}$.  
Then $\braket{[\chi\otimes \c{E}],\tau}=\braket{[\chi], R_{\c{A}}([\c{E}])(\tau)}=\sum_{i, j=1}^{k, l}\varphi(\langle \eta_{j}, \langle\xi_{i}, \xi_{i} \rangle_{{\mathcal A}} \eta_{j} \rangle_{{\mathcal A}})$.  
\end{proof}
The following computations are based on the pairing.  
  
\begin{lem}\label{lem:Krelationmap}
Let ${\mathcal A}$ be a unital $C^*$-algebra with finite dimensional bounded trace space.  
We define the map $\hat{R}_{\mathcal A}:H({\mathcal A})\rightarrow \r{ T}({\mathcal A})^*$ 
by $\hat{R}_{\mathcal A}([\chi])(\varphi)=\sum_{i=1}^{k}\varphi(\langle \xi_{i}, \xi_{i}\rangle)$, 
where $\{\xi_{i}\}_{i=1}^{k}$ is a finite basis of ${\mathcal E}$ as a right Hilbert ${\mathcal A}$-module.  
If $[{\mathcal E}]$ is an element of $\r{ Pic}({\mathcal A})$, 
then $\hat{R}_{\mathcal A}([\chi \otimes_{{\mathcal A}} {\mathcal E}])=\hat{R}_{\mathcal A}([\chi])R_{\mathcal A}([{\mathcal E}])$.  
\end{lem}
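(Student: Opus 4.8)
The plan is to recognize that $\hat{R}_{\mathcal A}$ is nothing but the map $[\chi]\mapsto\braket{[\chi],\cdot}$ coming from the pairing, so that the claimed factorization is an immediate consequence of the pairing identity $\braket{[\chi\otimes{\mathcal E}],\varphi}=\braket{[\chi],R_{\mathcal A}([{\mathcal E}])(\varphi)}$ established in the preceding proposition. Thus the whole content is to unwind the two definitions and to check that the right-hand side makes sense.

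First I would record that $\hat{R}_{\mathcal A}$ is well defined on $H({\mathcal A})$. For a fixed $\varphi\in\r{ T}({\mathcal A})$ the quantity $\sum_{i}\varphi(\langle\xi_{i},\xi_{i}\rangle)$ is independent of the chosen finite basis $\{\xi_{i}\}$: if $\{\eta_{j}\}$ is a second finite basis, the reconstruction formula $\xi_{i}=\sum_{j}\eta_{j}\langle\eta_{j},\xi_{i}\rangle$ together with the trace property $\varphi(ab)=\varphi(ba)$ gives
\begin{eqnarray}
\sum_{i}\varphi(\langle\xi_{i},\xi_{i}\rangle)&=&\sum_{i,j}\varphi(\langle\xi_{i},\eta_{j}\rangle\langle\eta_{j},\xi_{i}\rangle) \nonumber \\
&=&\sum_{i,j}\varphi(\langle\eta_{j},\xi_{i}\rangle\langle\xi_{i},\eta_{j}\rangle)=\sum_{j}\varphi(\langle\eta_{j},\eta_{j}\rangle), \nonumber
\end{eqnarray}
exactly as in the proof of \ref{pro:def}; moreover an isomorphism of right Hilbert ${\mathcal A}$-modules sends a basis to a basis and preserves the right inner product, so isomorphic modules yield the same functional. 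Hence $\hat{R}_{\mathcal A}([\chi])=\braket{[\chi],\cdot}$ as an element of $\r{ T}({\mathcal A})^{*}$.

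Next I would evaluate both sides of the asserted identity at an arbitrary $\varphi\in\r{ T}({\mathcal A})$. Since $[{\mathcal E}]\in\r{ Pic}({\mathcal A})$, \ref{pro:def} guarantees that $R_{\mathcal A}([{\mathcal E}])(\varphi)$ is again a bounded trace, so the right-hand side $\big(\hat{R}_{\mathcal A}([\chi])R_{\mathcal A}([{\mathcal E}])\big)(\varphi)=\hat{R}_{\mathcal A}([\chi])\big(R_{\mathcal A}([{\mathcal E}])(\varphi)\big)=\braket{[\chi],R_{\mathcal A}([{\mathcal E}])(\varphi)}$ is a genuine value of the pairing, while the left-hand side is $\hat{R}_{\mathcal A}([\chi\otimes_{\mathcal A}{\mathcal E}])(\varphi)=\braket{[\chi\otimes{\mathcal E}],\varphi}$. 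The preceding proposition asserts precisely that these two agree, which completes the argument.

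I do not expect a real obstacle here, since the lemma is a repackaging of the pairing identity; the only point that needs attention is that $R_{\mathcal A}([{\mathcal E}])(\varphi)$ must be a trace before $\hat{R}_{\mathcal A}([\chi])$ may be applied to it, which is exactly where the hypothesis $[{\mathcal E}]\in\r{ Pic}({\mathcal A})$---rather than ${\mathcal E}$ being merely a one-sided Hilbert module---enters. If one wishes to avoid citing the preceding proposition, I would instead expand both sides using the basis $\{\xi_{i}\otimes\eta_{j}\}$ of $\chi\otimes_{\mathcal A}{\mathcal E}$, for which $\langle\xi_{i}\otimes\eta_{j},\xi_{i}\otimes\eta_{j}\rangle=\langle\eta_{j},\langle\xi_{i},\xi_{i}\rangle\eta_{j}\rangle$, and observe that both sides collapse to $\sum_{i,j}\varphi(\langle\eta_{j},\langle\xi_{i},\xi_{i}\rangle\eta_{j}\rangle)$.
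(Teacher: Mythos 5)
Your proof is correct and follows essentially the same route as the paper: the paper's own proof simply says the claim follows ``as in the proof of \ref{pro:def}'', i.e.\ by the basis-independence computation and the tensor-basis expansion $\langle\xi_{i}\otimes\eta_{j},\xi_{i}\otimes\eta_{j}\rangle=\langle\eta_{j},\langle\xi_{i},\xi_{i}\rangle\eta_{j}\rangle$, which is exactly the calculation you carry out (and which also underlies the pairing proposition you cite, stated in the paper immediately before this lemma). Your version is just a more explicit write-up, with the helpful extra remark that $[{\mathcal E}]\in\r{Pic}({\mathcal A})$ is what guarantees $R_{\mathcal A}([{\mathcal E}])(\varphi)$ is again a trace so that the composition makes sense.
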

\begin{proof}
As in the same proof \ref{pro:def}, 
$\hat{R}_{\mathcal A}([\chi])$ does not depend on the choice of basis, 
$\hat{R}_{\mathcal A}$ is well-defined, 
and $\hat{R}_{\mathcal A}([\chi \otimes_{{\mathcal A}} {\mathcal E}])=\hat{R}_{\mathcal A}([\chi])R_{\mathcal A}([{\mathcal E}])$ for all ${\mathcal A}-{\mathcal A}$ imprimitivity bimodule ${\mathcal E}$.   
\end{proof}

We denote by $P_{k}(\c{A})$ the set of projections in $M_{k}(\m{C})$ 
and put $P_{\infty}(\c{A})=\cup^{\infty}_{k=1}P_{k}(\c{A})$.  
 
\begin{lem}\label{lem:HK}
Let ${\mathcal A}$ be a unital $C^*$-algebra 
and let $\varphi$ be a positive bounded trace on ${\mathcal A}$.  
Then
\begin{eqnarray}
\{\varphi^*([p]_{0}):p\in P_{\infty}({\mathcal A})\}&=&\{\varphi\otimes {\rm Tr}_{k} (p):p\in M_{k}({\mathcal A}), k\in \mathbb{N}\}\nonumber \\
&=&\{\hat{R}_{\mathcal A}([\chi])(\varphi):[\chi] \in H({\mathcal A})\}\nonumber
\end{eqnarray}
\end{lem}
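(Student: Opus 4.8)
The plan is to establish the two claimed set equalities by chasing the definitions and using the standard identification between $K_0$-classes, projections in matrix algebras, and finitely generated projective Hilbert modules. First I would unwind each of the three sets. The left-hand set is the range of the induced map $\varphi^*$ on $K_0(\mathcal{A})$, evaluated on positive $K_0$-classes of the form $[p]_0$ for $p \in P_\infty(\mathcal{A})$. The middle set is obtained by evaluating the (unnormalized) amplified trace $\varphi \otimes \mathrm{Tr}_k$ on projections $p \in M_k(\mathcal{A})$. The right-hand set is the range of the pairing $\hat{R}_{\mathcal{A}}([\chi])(\varphi)$ over all isomorphism classes of right Hilbert $\mathcal{A}$-modules with finite basis.

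For the first equality, I would recall that the positive part of $K_0(\mathcal{A})$ is represented precisely by classes $[p]_0$ of projections $p \in P_\infty(\mathcal{A}) = \bigcup_k P_k(\mathcal{A})$, and that the induced map satisfies $\varphi^*([p]_0) = (\varphi \otimes \mathrm{Tr}_k)(p)$ by the very definition of how a trace descends to $K_0$. Since every such $p$ lies in some $M_k(\mathcal{A})$, ranging over all $p \in P_\infty(\mathcal{A})$ on the left is the same as ranging over all $p \in M_k(\mathcal{A})$ and all $k$ on the right; this gives the first equality essentially by definition, with the only point to verify being that $\varphi^*$ is well-defined on $K_0$ (independence of the representative $p$ within its class), which follows because $\varphi \otimes \mathrm{Tr}_k$ is invariant under Murray--von Neumann equivalence and stabilization.

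For the second equality, I would use the correspondence between projections and Hilbert modules. Given a projection $p \in M_k(\mathcal{A})$, the module $\chi = p\mathcal{A}^k$ is a right Hilbert $\mathcal{A}$-module with the finite basis $\{pe_i\}_{i=1}^k$, and a direct computation gives $\hat{R}_{\mathcal{A}}([\chi])(\varphi) = \sum_{i=1}^k \varphi(\langle pe_i, pe_i\rangle) = \sum_{i=1}^k \varphi(e_i^* p e_i) = (\varphi \otimes \mathrm{Tr}_k)(p)$, matching the middle set. Conversely, every $[\chi] \in H(\mathcal{A})$ has a finite basis $\{\xi_i\}_{i=1}^k$, and setting $p = (\langle \xi_i, \xi_j\rangle)_{ij} \in M_k(\mathcal{A})$ produces a projection (this is exactly the construction recalled in Section \ref{sec:picard group}) with $\chi \cong p\mathcal{A}^k$, so $\hat{R}_{\mathcal{A}}([\chi])(\varphi)$ again equals $(\varphi \otimes \mathrm{Tr}_k)(p)$. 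Since $\hat{R}_{\mathcal{A}}$ is independent of the choice of basis by Lemma \ref{lem:Krelationmap}, this value is well-defined, and the two inclusions together give the second equality.

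The main obstacle, though it is more bookkeeping than conceptual, is checking that the trace values genuinely agree under the two different normalizations and identifications—specifically confirming that $\varphi^*([p]_0)$, the abstract $K_0$-pairing, coincides with the concrete sum $\sum_i \varphi(\langle \xi_i, \xi_i\rangle)$ appearing in $\hat{R}_{\mathcal{A}}$, rather than differing by a factor. I would resolve this by tracking the inner products explicitly through the isomorphism $\chi \cong p\mathcal{A}^k$ and verifying the identity $\langle pe_i, pe_i\rangle_{\mathcal{A}} = e_i^* p e_i$ under the standard $M_k(\mathcal{A})$-valued inner product, so that the sum over $i$ reproduces the fiberwise trace $\mathrm{Tr}_k$ paired with $\varphi$ exactly.
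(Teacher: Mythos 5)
Your proposal is correct and follows essentially the same route as the paper: the paper's proof is exactly the two-way correspondence you describe, passing from a module $\chi$ with basis $\{\xi_i\}$ to the projection $p=(\langle\xi_i,\xi_j\rangle)_{ij}$ and back from a projection $p\in M_k(\mathcal{A})$ to the module $p\mathcal{A}^k$ with basis $\{pe_i\}$, with the first equality treated as definitional via the standard picture of $K_0$. Your additional remarks on well-definedness of $\varphi^*$ and the normalization check are points the paper leaves implicit, but they introduce no divergence in method.
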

\begin{proof}
Let $\chi$ be a right Hilbert ${\mathcal A}$-module with finite basis $\{\xi_{i}\}_{i=1}^{k}$.  
Put $p=(\langle\xi_{i}, 
\xi_{j}\rangle_{{\mathcal A}})_{ij}$ be a projection in $M_{k}({\mathcal A})$.  
Then $\varphi(p)=\hat{R}_{\mathcal A}([\chi])(\varphi)$.  
On the other hand, let $p$ be a projection in $M_{k}({\mathcal A})$.  
Then $p{\mathcal A}^{k}$ is a right Hilbert ${\mathcal A}$-module 
with finite basis $\{pe_{i}\}_{i=1}^{k}$ 
where $\set{e_{i}}^{k}_{i=1}$ is a canonical basis of ${\mathcal A}^{k}$.  
Therefore $\hat{R}_{\mathcal A}([p{\mathcal A}^{k}])(\varphi)=\varphi(p)$.  
\end{proof}
\begin{lem}\label{lem:Krelationmap2}
Let ${\mathcal A}$ be a unital $C^*$-algebra with finite dimensional bounded trace space.  
We suppose $\partial_{e}\r{T}(\c{A})^{+}_{1}=n$.  
Say $\{\varphi_{i}\}_{i=1}^{n}=\partial_{e}\r{T}(\c{A})^{+}_{1}$.  
Put $E=\oplus_{i=1}^{n}\varphi^*_{i}(K_{0}({\mathcal A}))$.  
Then we can consider $E$ 
as an additive subgroup of $M_{1,n}(\mathbb{C})$.  
Then $EB=E$ 
for all $B$ in $\r{F}({\mathcal A})$.  
In particular,  $E$ is a module over $\r{F}({\mathcal A})$.  
\end{lem}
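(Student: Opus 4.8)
The plan is to argue entirely in coordinates relative to the basis $\set{\varphi_i}^n_{i=1}$, exploiting the two compatibilities already isolated in \ref{lem:HK} and \ref{lem:Krelationmap}. First I would reduce the equality $EB=E$ to the single inclusion $EB\subseteq E$. By \ref{pro:extfundamental group} the group $\r{F}(\c{A})$ consists exactly of the matrices $S_{\c{A}}(R_{\c{A}}([\c{E}]))$ with $[\c{E}]\in\r{Pic}(\c{A})$; since $\r{F}^{tr}(\c{A})$ is a group and $[\c{E}]^{-1}=[\c{E}^*]$, the inverse $B^{-1}=S_{\c{A}}(R_{\c{A}}([\c{E}^*]))$ again lies in $\r{F}(\c{A})$. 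Hence applying $EB\subseteq E$ to both $B$ and $B^{-1}$ yields $EB=E$, and the module assertion follows at once.

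Next I would describe $E$ through Hilbert-module row vectors. Reading $E=\oplus_{i=1}^n\varphi^*_i(K_0(\c{A}))$ as the image $\set{(\varphi^*_1(x),\dots,\varphi^*_n(x)):x\in K_0(\c{A})}$ of the map $\oplus_i\varphi^*_i$, I attach to each $[\chi]\in H(\c{A})$ the row vector $w(\chi)\in M_{1,n}(\m{C})$ with entries $w(\chi)_i=\hat{R}_{\c{A}}([\chi])(\varphi_i)$. Taking $p=(\braket{\xi_j,\xi_k}_{\c{A}})_{jk}$ for a finite basis of $\chi$, \ref{lem:HK} gives $w(\chi)_i=\varphi^*_i([p]_0)$, so $w(\chi)=(\varphi^*_1([p]_0),\dots,\varphi^*_n([p]_0))\in E$; conversely every $p\in P_\infty(\c{A})$ occurs via $\chi=p\c{A}^k$. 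As $\c{A}$ is unital every class of $K_0(\c{A})$ is a difference of such $[p]_0$ and each $\varphi^*_i$ is a homomorphism, so the vectors $w(\chi)$ generate precisely $E$.

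I would then transport the Picard action into right multiplication by $B$. With the matrix convention $R_{\c{A}}([\c{E}])(\varphi_j)=\sum_i B_{ij}\varphi_i$, the relation $\hat{R}_{\c{A}}([\chi\otimes_{\c{A}}\c{E}])=\hat{R}_{\c{A}}([\chi])R_{\c{A}}([\c{E}])$ of \ref{lem:Krelationmap} becomes $w(\chi\otimes_{\c{A}}\c{E})=w(\chi)B$. Since $\chi\otimes_{\c{A}}\c{E}$ is again a right Hilbert $\c{A}$-module with finite basis, its vector lies in $E$, so $w(\chi)B\in E$ for every $[\chi]$; as the $w(\chi)$ generate $E$ and right multiplication is additive, $EB\subseteq E$, completing the argument.

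The well-definedness of $\hat{R}_{\c{A}}$ and of the factorization identity are already granted by \ref{lem:HK} and \ref{lem:Krelationmap}, so those are routine. The step that needs genuine care is the coordinate bookkeeping of the third paragraph: matching the left-to-right composition $\hat{R}_{\c{A}}([\chi])R_{\c{A}}([\c{E}])$ with honest right multiplication $w(\chi)B$ under the fixed basis, where an index or transpose slip would most easily occur. The only conceptual point beyond this is the generation remark in the second paragraph, which is where the group (not merely monoid) structure of $K_0(\c{A})$ enters and upgrades the inclusion to the stated equality.
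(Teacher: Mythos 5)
Your proof is correct and takes essentially the same route as the paper's: the paper likewise forms the set of row vectors $\bigl(\hat{R}_{\mathcal A}([\chi])(\varphi_{1}),\dots,\hat{R}_{\mathcal A}([\chi])(\varphi_{n})\bigr)$ arising from finitely-based right Hilbert modules, applies the matrix form of Lemma \ref{lem:Krelationmap} to get the inclusion under right multiplication by $B$, and identifies this set with $E$ via Lemma \ref{lem:HK} and the standard picture of $K_{0}$. If anything, you are more careful at the two points the paper leaves implicit: you invoke $B^{-1}\in\mathrm{F}(\mathcal{A})$ (rather than mere matrix invertibility) to upgrade the inclusion to equality, and you note that the projection vectors only generate $E$ as a group rather than coincide with it.
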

\begin{proof}
Let $B$ be an element in $\r{F}({\mathcal A})$.  
Put $E_{0}=\oplus_{i=1}^{n}\{\hat{R}_{\mathcal A}([{\mathcal E}])(\varphi_{i}):[{\mathcal E}] \in H({\mathcal A})\}$
Considering the representation of the equation in \ref{lem:Krelationmap} 
with a basis of $\partial_{e}\r{T}(\c{A})^{+}_{1}$, 
$E_{0}B
\subset E_{0}$.  
Since $B$ is invertible, $E_{0}B=E_{0}$.  
By \ref{lem:HK} and by the standard picture of $K_{0}$, 
$E_{0}=E$.  
Hence $EB=E$.  
\end{proof}

We denote by $M_{n}(\m{R}^{+})$ the set of matrices all elements of which are  nonnegative.  

\begin{lem}\label{lem:positivematrix}
Let $A$ be an invertible element of $M_{n}(\mathbb{C})$.  
If $A$ and $A^{-1}$ are elements of $M_{n}(\mathbb{R}^{+})$, 
then there exist a permutation $\sigma$ in $S_{n}$ 
and a positive invertible diagonal matrix $D$,  
such that $A=DU(\sigma)$.  
\end{lem}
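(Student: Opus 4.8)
The plan is to exploit the relation $AA^{-1}=I$ entrywise, using nonnegativity of both $A$ and $A^{-1}$ to convert the vanishing of the off-diagonal entries of the identity into a rigid support condition, and then to invoke invertibility of the two matrices to pin down that each row of $A$ carries exactly one nonzero entry.

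First I would write $A=(a_{ij})$ and $A^{-1}=(b_{jk})$ with all $a_{ij},b_{jk}\ge 0$, and read off $\sum_{j}a_{ij}b_{jk}=\delta_{ik}$. For a fixed pair $i\ne k$ the sum equals zero and every summand is nonnegative, so $a_{ij}b_{jk}=0$ for all $j$. The diagonal relation $\sum_{j}a_{ij}b_{ji}=1$ shows in particular that every row of $A$ is nonzero.

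Next, fix a row $i$ and suppose $a_{ij}>0$. Then $a_{ij}b_{jk}=0$ for every $k\ne i$ forces $b_{jk}=0$ for all $k\ne i$, i.e. row $j$ of $A^{-1}$ is supported only in column $i$; it is nonzero (as $A^{-1}$ is invertible), so $b_{ji}>0$. If row $i$ of $A$ had two positive entries $a_{ij}>0$ and $a_{ij'}>0$ with $j\ne j'$, then rows $j$ and $j'$ of $A^{-1}$ would both be nonzero multiples of the $i$-th coordinate vector, hence linearly dependent, contradicting invertibility of $A^{-1}$. Thus each row of $A$ has exactly one positive entry; I call its position $\sigma(i)$ and its value $d_{i}:=a_{i\sigma(i)}>0$.

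Finally I would check that $\sigma$ is a permutation: if $\sigma(i)=\sigma(i')$ for $i\ne i'$, then rows $i$ and $i'$ of $A$ would both be supported only in that single column, hence linearly dependent, contradicting invertibility of $A$; so $\sigma\in S_{n}$. Setting $D=\mathrm{diag}(d_{1},\dots,d_{n})$, which is positive and invertible, one computes $(DU(\sigma))_{ij}=d_{i}\,\delta_{j,\sigma(i)}=a_{ij}$, that is, $A=DU(\sigma)$. The step I expect to require the most care is the uniqueness argument: the passage from ``the off-diagonal products vanish'' to ``exactly one nonzero entry per row'' genuinely uses invertibility (nonnegativity alone is insufficient), and the cleanest way to deploy it is the linear-dependence contradiction applied to $A^{-1}$ for existence and uniqueness within a row, and to $A$ for injectivity of $\sigma$.
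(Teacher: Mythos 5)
Your proof is correct and is essentially the paper's own argument: both proofs turn the off-diagonal equations $\sum_{j}a_{ij}b_{jk}=0$ for $i\neq k$, together with nonnegativity of all entries, into the termwise vanishing $a_{ij}b_{jk}=0$, and then use invertibility to conclude that each row of $A$ carries exactly one positive entry, giving $A=DU(\sigma)$. The only difference is bookkeeping in how invertibility is deployed: the paper uses the second relation $A^{-1}A=E_{n}$ together with determinant (no-zero-row) arguments and then transposes to treat columns, whereas you substitute linear independence of the rows of $A^{-1}$ (for uniqueness within a row) and of $A$ (for injectivity of $\sigma$), which is the same use of invertibility in slightly tidier form.
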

\begin{proof}
This proof is based on the calculation of matrix elements.  
It is sufficient to show that there exists only one positive element in each row and in each column.  
Let $A=\{a_{ij}\}, B=\{b_{ij}\}$ be an element of $M_{n}(\mathbb{R}^{+})$ such that $AB=BA=E_{n}$.  
We regard $i_{0}$ as fixed and we suppose $a_{i_{0}j}=0$ for all $j$.  
Then $\r{det}A=0$, which contradicts the invertibility of $A$.  
Therefore there exists $j_{0}$ such that $a_{i_{0}j_{0}}>0$.  
We shall show that if $j\neq j_{0}$, then $a_{i_{0}j}=0$.  
Since $AB=E_{n}$, if $j\neq i_{0}$, $\sum_{k=1}^{n}a_{i_{0}k}b_{kj}=0$.  
By the positivity of the matrix elements of $A$ and $B$, 
if $j\neq i_{0}$, then $b_{j_{0}j}=0$.  
Otherwise, we suppose $b_{j_{0}i_{0}}=0$, 
then $\r{det}B=0$, which contradicts the invertibility of $B$.  
Therefore $b_{j_{0}i_{0}}>0$.  
Since $BA=E_{n}$, if $j\neq j_{0}$, then $\sum_{k=1}^{n}b_{j_{0}k}a_{kj}=0$.  
Because $b_{j_{0}i_{0}}>0$, if $j\neq j_{0}$, then $a_{i_{0}j}=0$.  
Hence there exists only one positive element in each column.  
By transposing the matrices on the both side of  $AB=BA=E_{n}$, 
the rest of the proof runs as before.   
\end{proof}

By the above lemmas, the form of the elements in $\r{ F}(\c{A})$ are restricted.   

\begin{pro}\label{pro:Krelation}
Let $A$ be a unital $C^*$-algebra with finite dimensional bounded trace space.  
We suppose $\partial_{e}\r{T}(\c{A})^{+}_{1}=n$.  
Say $\set{\varphi_{i}}_{i=1}^{n}=\partial_{e}\r{T}(\c{A})^{+}_{1}$.  
For any $B$ in $\r{F}({\mathcal A})$, 
there exists an invertible diagonal matrix $D$ in $M_n(\mathbb{R^{+}})$ 
and $\sigma \in S_{n}$ 
such that $B=DU(\sigma)$ and 
$D_{ii}\varphi_{i}^*(K_{0}({\mathcal A}))=\varphi_{\sigma(i)}^*(K_{0}({\mathcal A}))$.  
\end{pro}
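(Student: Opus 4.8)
The plan is to combine the positivity of the operators $R_{\c{A}}([\c{E}])$ with the K-theoretic constraint of \ref{lem:Krelationmap2}. First I would recall from \ref{pro:extfundamental group} that every $B\in\r{F}(\c{A})$ is the matrix (in the basis $\partial_{e}\r{T}(\c{A})^{+}_{1}$) of some $R_{\c{A}}([\c{E}])$ with $[\c{E}]\in\r{Pic}(\c{A})$; since $R_{\c{A}}$ is multiplicative with $R_{\c{A}}([\c{A}])=\r{id}$ and $[\c{E}^{*}]$ is inverse to $[\c{E}]$, the matrix $B$ is invertible with $B^{-1}$ the matrix of $R_{\c{A}}([\c{E}^{*}])$. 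The key preliminary observation is that $R_{\c{A}}([\c{E}])$ sends positive traces to positive traces: for a finite basis $\set{\xi_{i}}$ of $\c{E}$ and $a=b^{*}b\geq 0$ one has $\langle \xi_{i},a\xi_{i}\rangle=\langle b\xi_{i},b\xi_{i}\rangle\geq 0$ in $\c{A}$, so $(R_{\c{A}}([\c{E}])(\varphi))(a)=\sum_{i}\varphi(\langle\xi_{i},a\xi_{i}\rangle)\geq 0$ whenever $\varphi$ is positive, and likewise for $R_{\c{A}}([\c{E}^{*}])$.

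Next I would translate positivity into the coordinate picture. Since $\partial_{e}\r{T}(\c{A})^{+}_{1}=\set{\varphi_{i}}_{i=1}^{n}$ and $\r{T}(\c{A})$ is finite dimensional, every positive trace is a nonnegative combination $\sum_{i}c_{i}\varphi_{i}$ with $c_{i}\geq 0$; that is, in the basis $\set{\varphi_{i}}$ the cone of positive traces is exactly the nonnegative orthant. Applying $B$ to the basis vector $\varphi_{j}$, whose coordinate vector is the standard $e_{j}$, produces a positive trace, so the $j$-th column of $B$ has nonnegative entries; hence $B\in M_{n}(\m{R}^{+})$, and the same argument applied to $R_{\c{A}}([\c{E}^{*}])$ gives $B^{-1}\in M_{n}(\m{R}^{+})$. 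By \ref{lem:positivematrix} there are a positive invertible diagonal $D$ and $\sigma\in S_{n}$ with $B=DU(\sigma)$, which is the first assertion.

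Finally I would extract the K-theoretic identity from \ref{lem:Krelationmap2}, which gives $EB=E$ for $E=\oplus_{i=1}^{n}\varphi_{i}^{*}(K_{0}(\c{A}))$ viewed as row vectors, the coordinate factors being subgroups of $\m{R}$. Writing $B=DU(\sigma)$ as the monomial matrix whose row $i$ carries the entry $D_{ii}$ in column $\sigma(i)$, right multiplication sends a row vector $x$ to the vector whose $j$-th coordinate is $D_{\sigma^{-1}(j)\sigma^{-1}(j)}x_{\sigma^{-1}(j)}$. Since $E$ is the product of its coordinate subgroups, comparing the $j$-th coordinates of $EB$ and $E$ yields $D_{\sigma^{-1}(j)\sigma^{-1}(j)}\varphi_{\sigma^{-1}(j)}^{*}(K_{0}(\c{A}))=\varphi_{j}^{*}(K_{0}(\c{A}))$, and substituting $j=\sigma(i)$ gives $D_{ii}\varphi_{i}^{*}(K_{0}(\c{A}))=\varphi_{\sigma(i)}^{*}(K_{0}(\c{A}))$.

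I expect the main obstacle to be the positivity step: one must confirm that the extreme points $\set{\varphi_{i}}$ are precisely the generators of the extreme rays of the positive cone, so that \emph{preserving positive traces} is genuinely equivalent to \emph{having nonnegative matrix entries}, and one must keep the left and right action conventions consistent between the operator action on $\c{L}(\r{T}(\c{A}))$ and the right-module action of \ref{lem:Krelationmap2}, so that the permutation $\sigma$ rather than $\sigma^{-1}$ appears in the final formula.
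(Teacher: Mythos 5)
Your proposal is correct and follows essentially the same route as the paper's proof: positivity of $R_{\c{A}}([\c{E}])$ and $R_{\c{A}}([\c{E}^{*}])$ in the extreme-trace basis gives nonnegative entries for $B$ and $B^{-1}$, Lemma \ref{lem:positivematrix} yields $B=DU(\sigma)$, and Lemma \ref{lem:Krelationmap2} gives the identity $D_{ii}\varphi_{i}^{*}(K_{0}(\c{A}))=\varphi_{\sigma(i)}^{*}(K_{0}(\c{A}))$. The only (harmless) differences are that you spell out the positivity and coordinate arguments the paper leaves implicit, and you read the equality off $EB=E$ coordinatewise where the paper derives one inclusion and then invokes invertibility of $B$ for the reverse one.
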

\begin{proof}
We first prove that for any element $B$ in $\r{F}({\mathcal A})$ 
there exist a $\sigma$ in $S_{n}$ and a positive diagonal matrix $D$, 
which belong to $M_{n}(\mathbb{R}^{+})$, 
such that $B=DU(\sigma)$.  
Let $\varphi$ be an element in $\r{ T}({\mathcal A})^{+}$, 
let ${\mathcal E}$ be an ${\mathcal A}-{\mathcal A}$ imprimitivity bimodule, 
and let $\{\xi_{i}\}_{i=1}^{n}$ be a basis of ${\mathcal E}$.  
Because $\varphi$ is positive, 
$R_{\mathcal A}([{\mathcal E}])(\varphi)$ is positive.  
Therefore considering the representation of $R_{\mathcal A}([{\mathcal E}])$ and $R_{\mathcal A}([{\mathcal E}^*])$ 
with the basis of $\r{ T}({\mathcal A})_{1}^{+}$, 
we see that all elements of the matrices are positive.  
By \ref{lem:positivematrix}, the first step is proved and we can put $B=DU(\sigma)$.  \\
\ \ We next prove $D_{ii}\varphi_{i}^*(K_{0}({\mathcal A}))=\varphi_{\sigma(i)}^*(K_{0}({\mathcal A}))$.  Put $E_{i}=\varphi_{i}^*(K_{0}({\mathcal A}))$.  
By \ref{lem:Krelationmap2} and by the previous result of $B=DU(\sigma)$, 
we can obtain $D_{ii}E_{i}
\subset E_{\sigma(i)}$.  
Considering the invertibility of B in $\r{F}({\mathcal A})$, 
$\cfrac{1}{D_{ii}} E_{\sigma(i)}
\subset E_{i}$.  
Hence $D_{ii} E_{i}^*(K_{0}({\mathcal A}))=E_{\sigma(i)}$.   
\end{proof}

\begin{rem}\label{rem:extreme ray}
Let ${\mathcal A}$ and ${\mathcal B}$ be a unital $C^*$-algebra.  
Suppose $\r{ dim}\r{ T}({\mathcal A})=\r{ dim}\r{ T}({\mathcal B})$.  
We say a positive bounded trace $\varphi$ is an extreme ray 
if for any fixed positive bounded trace $\psi$, 
if $\psi\leq \varphi$, 
then there exists a positive real number $\lambda $ 
such that $\lambda\varphi= \psi$. 
If both an invertible element $T$ of ${\mathcal L}(\r{ T}({\mathcal A}),\r{ T}({\mathcal B}))$ 
and the inverse $T^{-1}$ are positive, 
then $T\varphi$ is an extreme ray of $\r{ T}({\mathcal B})^{+}$ 
for any extreme ray $\varphi$ of $\r{ T}({\mathcal A})^{+}$.  
We can also prove the previous proposition by the fact.  
\end{rem}

\begin{cor}
Let ${\mathcal A}$ be a unital $C^*$-algebra with finite dimensional bounded trace space.  
If $\c{A}$ is separable, then $\r{ F}(\c{A})$ and $\r{F}_{\r{det}}(\c{A})$ are countable group.  
\end{cor}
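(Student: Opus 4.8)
The plan is to exploit the rigid form of the elements of $\r{ F}(\c{A})$ obtained in Proposition \ref{pro:Krelation}. That result tells us that every $B\in\r{ F}(\c{A})$ factors as $B=DU(\sigma)$ with $\sigma\in S_{n}$ a permutation and $D$ a positive invertible diagonal matrix satisfying $D_{ii}\,\varphi_{i}^{*}(K_{0}(\c{A}))=\varphi_{\sigma(i)}^{*}(K_{0}(\c{A}))$. Since $n=\r{ dim}\r{ T}(\c{A})$ is finite there are only $n!$ choices for the permutation factor $U(\sigma)$, so it suffices to show that for each fixed $\sigma$ the set of admissible diagonal matrices $D$ is countable.

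To count the diagonal matrices, write $E_{i}=\varphi_{i}^{*}(K_{0}(\c{A}))$. Because $\c{A}$ is separable its $K_{0}$-group is countable, and hence each $E_{i}$, being the image of $K_{0}(\c{A})$ under the homomorphism $\varphi_{i}^{*}$, is a countable subgroup of $\m{R}$. The decisive observation is that unitality pins $D_{ii}$ into one of these countable sets: since $\c{A}$ is unital and $\varphi_{i}$ is a positive trace of norm one, we have $\varphi_{i}(1_{\c{A}})=\|\varphi_{i}\|=1$, so that $1\in E_{i}$. Feeding $1\in E_{i}$ into the relation $D_{ii}E_{i}=E_{\sigma(i)}$ from Proposition \ref{pro:Krelation} yields $D_{ii}=D_{ii}\cdot 1\in E_{\sigma(i)}$.

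Thus for a fixed $\sigma$ each diagonal entry $D_{ii}$ lies in the countable set $E_{\sigma(i)}$, so the tuple $(D_{11},\dots,D_{nn})$ ranges over a subset of the finite product $E_{\sigma(1)}\times\cdots\times E_{\sigma(n)}$, which is countable. Taking the union over the $n!$ permutations, $\r{ F}(\c{A})$ is a finite union of countable sets, hence a countable group. Finally $\r{ F}_{\r{det}}(\c{A})=|\r{det}|(\r{ F}(\c{A}))$ is the image of the countable set $\r{ F}(\c{A})$ under $|\r{det}|$, so it too is countable.

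The only genuinely new input beyond the structural results already proved is the remark that $1\in E_{i}$; without it a diagonal entry $D_{ii}$ associated to an index with $E_{i}=\{0\}$ could in principle range freely over $\m{R}_{+}^{\times}$, and countability would fail. Unitality together with the positivity of the extremal traces is exactly what rules this out, so I expect verifying $\varphi_{i}(1_{\c{A}})=1$ (and hence $E_{i}\neq\{0\}$) to be the one point deserving care; everything else is bookkeeping with countable sets.
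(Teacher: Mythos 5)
Your proposal is correct and follows essentially the same route as the paper's own proof: both invoke Proposition \ref{pro:Krelation} to write $B=DU(\sigma)$, use separability to make each $\varphi_{i}^{*}(K_{0}(\c{A}))$ countable, and then use $1\in\varphi_{i}^{*}(K_{0}(\c{A}))$ to force $D_{ii}\in\varphi_{\sigma(i)}^{*}(K_{0}(\c{A}))$, giving countability of $\r{F}(\c{A})$ and hence of $\r{F}_{\r{det}}(\c{A})$. Your write-up is in fact somewhat more careful than the paper's, making explicit both the finiteness of the set of permutations and why $1\in E_{i}$ matters.
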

\begin{proof}
We suppose $\partial_{e}\r{T}(\c{A})^{+}_{1}=n$.  
Say $\{\varphi_{i}\}_{i=1}^{n}=\partial_{e}\r{T}(\c{A})^{+}_{1}$.  
Then $\oplus^{n}_{i=1}\varphi^{*}_{i}(K_{0}(\c{A}))$ is a countable additive subgroup of $\m{R}^{n}$.  
Let $B\in \r{ F}(\c{A})$.  
There exists an invertible diagonal matrix $D$ in $M_n(\mathbb{R^{+}})$ 
and $\sigma \in S_{n}$ 
such that $B=DU(\sigma)$
Since $1\in \varphi^{*}_{i}(K_{0}(\c{A}))$ for any $i$, 
$D_{ii}\in \varphi^{*}_{\sigma(i)}(K_{0}(\c{A}))$ for any $k,l$.  
Therefore $\r{ F}(\c{A})$ and $F_{\r{det}}(\c{A})$ are countable.  
\end{proof}

 That proposition \ref{pro:Krelation} enables us to calculate $\r{F}({\mathcal A})$ easily.  
We shall show some examples.  

\begin{cor}\label{cor:dsum}
Let ${\mathcal A}$ be a unital $C^*$-algebra with a unique normalized bounded trace $\tau$, 
let ${\mathcal B}\equiv\oplus_{i=1}^{n}{\mathcal A}_{i}$ 
where ${\mathcal A}_{i}={\mathcal A}$.  
If $\r{F}({\mathcal A})=\tau^*(K_{0}({\mathcal A}))\cap(\tau^*(K_{0}({\mathcal A})))^{-1}\cap \m{R}^{+}$,  
then \[\r{F}({\mathcal B})=\{DU(\sigma):\sigma\in S_{n}, 
D\ \r{ is\ a\ diagonal\ matrix\ in}\ M_{n}(\mathbb{R^{+}})\ s.t. \ D_{ii}\in \r{F}({\mathcal A})\}\] 
\end{cor}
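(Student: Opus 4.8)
The plan is to combine Proposition \ref{pro:Krelation} (which pins down the \emph{form} of every element) with an explicit realization of the two natural families of generators. First I would record the trace data of $\c{B}$. Since each summand $\c{A}_{i}=\c{A}$ carries a unique normalized trace $\tau$, the functional $\tau_{i}(a_{1},\dots ,a_{n})=\tau(a_{i})$ is a positive trace of norm one, these are exactly the extreme points, so $\partial_{e}\r{T}(\c{B})^{+}_{1}=\{\tau_{1},\dots ,\tau_{n}\}$ and $\r{dim}\,\r{T}(\c{B})=n$; I fix this indexed basis to represent $\r{F}(\c{B})$. Using $K_{0}(\c{B})=\bigoplus_{i=1}^{n}K_{0}(\c{A})$ together with $\tau_{i}^{*}(x_{1},\dots ,x_{n})=\tau^{*}(x_{i})$, I would note that $\tau_{i}^{*}(K_{0}(\c{B}))=\tau^{*}(K_{0}(\c{A}))=:G_{0}$ for every $i$.

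For the inclusion $\r{F}(\c{B})\subset G$, where $G$ denotes the right-hand side of the asserted identity, take $B\in\r{F}(\c{B})$. By Proposition \ref{pro:Krelation} there are a positive invertible diagonal $D\in M_{n}(\m{R}^{+})$ and $\sigma\in S_{n}$ with $B=DU(\sigma)$ and $D_{ii}\,\tau_{i}^{*}(K_{0}(\c{B}))=\tau_{\sigma(i)}^{*}(K_{0}(\c{B}))$. By the previous paragraph both sides equal $G_{0}$, so $D_{ii}G_{0}=G_{0}$. Since $\tau$ is normalized, $1=\tau^{*}([1_{\c{A}}])\in G_{0}$, whence $D_{ii}=D_{ii}\cdot 1\in G_{0}$ and $D_{ii}^{-1}=D_{ii}^{-1}\cdot 1\in G_{0}$, i.e.\ $D_{ii}\in G_{0}\cap G_{0}^{-1}\cap\m{R}^{+}$, which equals $\r{F}(\c{A})$ by hypothesis. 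Hence $B\in G$.

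For the reverse inclusion I would use that $\r{F}^{tr}(\c{B})=R_{\c{B}}(\r{Pic}(\c{B}))$ is a group (Proposition \ref{pro:extfundamental group}) and realize the generators of $G$ separately. A permutation $\sigma\in S_{n}$ gives the automorphism $\gamma_{\sigma}$ of $\c{B}$ permuting the summands; the trivial self-similar pair $(1_{\c{B}},\gamma_{\sigma})$ sends $\tau_{i}\mapsto\tau_{i}\circ\gamma_{\sigma}=\tau_{\sigma(i)}$, so its matrix is a permutation matrix, and as $\sigma$ ranges over $S_{n}$ every $U(\sigma)$ is obtained. For a diagonal $D$ with $D_{ii}\in\r{F}(\c{A})$, choose by Definitions \ref{def:ontracesp} and \ref{def:fund} a self-similar pair $(p_{i},\Phi_{i})$ of $\c{A}$ with $p_{i}\in M_{k_{i}}(\c{A})$ and $(\r{Tr}_{k_{i}}\otimes\tau)\circ\Phi_{i}=D_{ii}\tau$. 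Setting $k=\max_{i}k_{i}$, padding to $\tilde p_{i}=\r{diag}(p_{i},0)\in M_{k}(\c{A})$ with the corresponding $\tilde\Phi_{i}$, and forming $P=(\tilde p_{1},\dots ,\tilde p_{n})\in M_{k}(\c{B})=\bigoplus_{i}M_{k}(\c{A}_{i})$ and $\Phi=\bigoplus_{i}\tilde\Phi_{i}$, a short computation gives $T_{(P,\Phi)}(\tau_{j})=(\r{Tr}_{k}\otimes\tau)\circ\tilde\Phi_{j}=D_{jj}\tau_{j}$, so $(P,\Phi)$ represents $\r{diag}(D_{11},\dots ,D_{nn})$. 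Since $\r{F}^{tr}(\c{B})$ is a group, products of these elements realize every $DU(\sigma)\in G$, giving $G\subset\r{F}(\c{B})$.

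The inclusion $\r{F}(\c{B})\subset G$ is routine once Proposition \ref{pro:Krelation} is in hand. The main obstacle is the realization step: I must verify that the padded direct sum $P$ is again a \emph{self-similar full} projection of $M_{k}(\c{B})$ — fullness holds because each $\tilde p_{i}$ remains full in $M_{k}(\c{A}_{i})$ (a full projection of a corner generates the whole matrix algebra, as $1_{\c A}$ does), so $P$ generates $\bigoplus_{i}M_{k}(\c{A}_{i})$, and the density of $\r{span}\{a^{*}Pb\}$ follows componentwise — and that $\Phi$ is an isomorphism of $\c{B}$ onto $PM_{k}(\c{B})P$. I also need to pin the matrix convention so that $\gamma_{\sigma}$ contributes exactly the matrices $U(\sigma)$; since the index is only defined up to permutation and $\sigma$ runs over all of $S_{n}$, this entails no loss.
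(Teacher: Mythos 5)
Your proposal is correct and takes essentially the same route as the paper's proof: the paper also realizes the permutation matrices via the coordinate-permuting automorphisms of $\c{B}$ (through $[\c{E}_{\alpha(\sigma)}]\in\r{Pic}(\c{B})$), asserts that diagonal matrices with entries in $\r{F}(\c{A})$ lie in $\r{F}(\c{B})$, and obtains the reverse inclusion from Proposition \ref{pro:Krelation}. The only difference is expository: you supply details the paper leaves implicit, namely the padded direct-sum self-similar pair realizing the diagonal part, and the identification $\tau_{i}^{*}(K_{0}(\c{B}))=\tau^{*}(K_{0}(\c{A}))$ combined with the hypothesis to pin down $D_{ii}\in\r{F}(\c{A})$.
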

\begin{proof}
Let $\tau_{i}$ be the normalized trace of ${\mathcal A}_{i}$.  
We define $\alpha(\sigma):{\mathcal B}\rightarrow {\mathcal B}$ \\
by $\alpha(\sigma)(a_{1}, a_{2}, \cdot\cdot\cdot, a_{n})
=(a_{\sigma^{-1}(1)}, a_{\sigma^{-1}(2)}, \cdot\cdot\cdot, a_{\sigma^{-1}(n)})$ 
where $\sigma\in S_{n}$.  
Considering $[{\mathcal E}_{\alpha(\sigma)}]$ in $\r{ Pic}({\mathcal B})$, 
$\r{F}({\mathcal B})$ includes permitations of $M_{n}(\mathbb{R})$.  
Moreover $\{D:\r{ diagonal\ matrix\ in}\ M_{n}(\mathbb{R^{+}}):D_{ii}\in \r{F}({\mathcal A})\}$ 
is a subset(subgroup) of $\r{F}({\mathcal B})$.  
Hence (RHS)$\subset \r{F}({\mathcal B})$.  
As a result, by \ref{pro:Krelation}, 
$\r{F}({\mathcal B})\subset$(RHS).  
\end{proof}  
\begin{cor}\label{cor:0}
Let ${\mathcal A}$ be  unital $C^*$-algebras with finite dimensional bounded trace space.  
We suppose $\partial_{e}\r{T}(\c{A})^{+}_{1}=n$.  
Say $\{\varphi_{i}\}_{i=1}^{n}=\partial_{e}\r{T}(\c{A})^{+}_{1}$.  
If there exist $i_{0}$ and $i_{1}$ 
such that $d\varphi_{i_{0}}^*(K_{0}({\mathcal A}))
\neq\varphi_{i_{1}}^*(K_{0}({\mathcal A}))$ 
for any  $d\in\mathbb{R}^{+}$, 
then $B_{i_{0}i_{1}}=B_{i_{1}i_{0}}=0$ 
for all element $B$ in $\r{F}({\mathcal A})$.   
\end{cor}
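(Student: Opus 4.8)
The plan is to derive this directly from Proposition \ref{pro:Krelation}, which already pins down the shape of every element of $\r{F}(\c{A})$. Given an arbitrary $B\in\r{F}(\c{A})$, I first invoke that proposition to produce a permutation $\sigma\in S_{n}$ and an invertible positive diagonal matrix $D\in M_{n}(\m{R}^{+})$ with $B=DU(\sigma)$ and with the compatibility relation $D_{ii}\varphi_{i}^{*}(K_{0}(\c{A}))=\varphi_{\sigma(i)}^{*}(K_{0}(\c{A}))$ for each $i$. The point of writing $B$ in the form $DU(\sigma)$ is that its entries are completely transparent: since $U(\sigma)_{ij}=1$ exactly when $j=\sigma(i)$ and is $0$ otherwise, we have $B_{ij}=D_{ii}$ when $j=\sigma(i)$ and $B_{ij}=0$ otherwise. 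In particular each row and column has a single nonzero entry, and a given off-diagonal entry is nonzero if and only if $\sigma$ sends its row index to its column index.

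The core of the argument is then to rule out $\sigma(i_{0})=i_{1}$ and $\sigma(i_{1})=i_{0}$ using the hypothesis. First I would treat $B_{i_{0}i_{1}}$. Suppose, for contradiction, that $B_{i_{0}i_{1}}\neq 0$; by the entry description this forces $\sigma(i_{0})=i_{1}$. Feeding this into the compatibility relation at $i=i_{0}$ gives
\[
D_{i_{0}i_{0}}\,\varphi_{i_{0}}^{*}(K_{0}(\c{A}))=\varphi_{\sigma(i_{0})}^{*}(K_{0}(\c{A}))=\varphi_{i_{1}}^{*}(K_{0}(\c{A})).
\]
Setting $d=D_{i_{0}i_{0}}\in\m{R}^{+}$, this reads $d\,\varphi_{i_{0}}^{*}(K_{0}(\c{A}))=\varphi_{i_{1}}^{*}(K_{0}(\c{A}))$, directly contradicting the standing assumption that $d\,\varphi_{i_{0}}^{*}(K_{0}(\c{A}))\neq\varphi_{i_{1}}^{*}(K_{0}(\c{A}))$ for every $d\in\m{R}^{+}$. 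Hence $B_{i_{0}i_{1}}=0$.

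For $B_{i_{1}i_{0}}$ the reasoning is symmetric, with a minor bookkeeping twist in the direction of the scalar. If $B_{i_{1}i_{0}}\neq 0$ then $\sigma(i_{1})=i_{0}$, and the relation at $i=i_{1}$ yields $D_{i_{1}i_{1}}\,\varphi_{i_{1}}^{*}(K_{0}(\c{A}))=\varphi_{i_{0}}^{*}(K_{0}(\c{A}))$, i.e. $\varphi_{i_{1}}^{*}(K_{0}(\c{A}))=D_{i_{1}i_{1}}^{-1}\varphi_{i_{0}}^{*}(K_{0}(\c{A}))$. Taking $d=D_{i_{1}i_{1}}^{-1}\in\m{R}^{+}$ again contradicts the hypothesis, so $B_{i_{1}i_{0}}=0$ as well. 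Since $B$ was an arbitrary element of $\r{F}(\c{A})$, both entries vanish identically on $\r{F}(\c{A})$, as claimed.

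I do not expect any genuine obstacle here: the entire content is carried by Proposition \ref{pro:Krelation}, and what remains is the elementary observation that a nonzero off-diagonal entry of $DU(\sigma)$ records a value of $\sigma$, together with the careful use of the $K_{0}$-compatibility relation in the correct direction for each of the two entries. The only point demanding slight attention is making sure the scalar is taken as $D_{i_{1}i_{1}}^{-1}$ (rather than $D_{i_{1}i_{1}}$) in the second case so that it still lies in $\m{R}^{+}$ and the hypothesis applies verbatim.
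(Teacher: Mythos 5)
Your proof is correct and follows essentially the same route as the paper: both deduce from Proposition \ref{pro:Krelation} that $B=DU(\sigma)$ with $D_{ii}\varphi_{i}^{*}(K_{0}(\c{A}))=\varphi_{\sigma(i)}^{*}(K_{0}(\c{A}))$, and then observe that a nonzero entry $B_{i_{0}i_{1}}$ (or $B_{i_{1}i_{0}}$) would force a scalar relation between $\varphi_{i_{0}}^{*}(K_{0}(\c{A}))$ and $\varphi_{i_{1}}^{*}(K_{0}(\c{A}))$, contradicting the hypothesis. Your write-up is in fact more careful than the paper's one-line argument, since you treat both entries explicitly and correctly invert the scalar (taking $d=D_{i_{1}i_{1}}^{-1}$) so the hypothesis applies verbatim, a step the paper leaves implicit because the condition $d\varphi_{i_{0}}^{*}(K_{0}(\c{A}))\neq\varphi_{i_{1}}^{*}(K_{0}(\c{A}))$ for all $d\in\m{R}^{+}$ is symmetric under $d\mapsto d^{-1}$.
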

\begin{proof}
We suppose $B_{i_{0}i_{1}}\neq0$.  
Then $B_{i_{0}i_{1}}\varphi_{i_{1}}^*(K_{0}({\mathcal A}))=\varphi_{i_{0}}^*(K_{0}({\mathcal A}))$.  
This contradicts our assumption.   
\end{proof}
\begin{ex}\label{ex:prime}
Put ${\mathcal A}=M_{2^{\infty}}\oplus M_{2^{\infty}}\oplus M_{3^{\infty}}$, \\
then 
$\r{F}({\mathcal A})=\{\left[ 
 \begin{array}{ccc}
2^{k} &0 & 0\\
0 & 2^{l}& 0\\
0 & 0& 3^{m}\\
 \end{array} 
 \right], 
 \left[ 
 \begin{array}{ccc}
0 &2^{k} & 0\\
2^{l} & 0& 0\\
0 & 0& 3^{m}\\
 \end{array} 
 \right]
: k,l,m \in \mathbb{Z}
\}$\\ 
and $\r{F}_{\r{det}}(\c{A})=\set{2^{n}3^m:n,m\in \m{Z}}$.  
\end{ex}
\begin{ex}\label{ex:simpleaf}
Let $P$ be the set of all prime numbers.  \\
Put $P=\set{p_{i}:i\in \m{N}\ p_{i}<p_{j}(i<j)}$.  
Let ${\mathcal A}_{n}=M_{\Pi^{n}_{k=1}(4p_{k}^{2})}(\m{C})\oplus M_{\Pi^{n}_{k=1}(4p_{k}^{2})}(\m{C})$.  
We define ${}^*$-homomorphisms $\psi_{n} :A_{n}\rightarrow A_{n+1}$\\
by $\psi_{n}((a,b))=(\r{diag}(a,a,\cdots,a,b,b),\r{diag}(b,b,\cdots,b,a,a))$.  
Let ${\mathcal A}$ be the inductive limit of the sequence
\[\begin{CD}
      {\mathcal A}_{0} @>\psi_{0}>> {\mathcal A}_{1} @>\psi_{1}>> {\mathcal A}_{2} @>>> \cdots  \end{CD}. \]
\end{ex} 
Then ${\mathcal A}$ is simple and $\r{ dim}\r{ T}({\mathcal A})=2$.  
Let $\tau^{(n)}_{1}$ and $\tau^{(n)}_{2}$ be the normalized traces on $\c{A}_{n}$ such that 
$\tau^{(n)}_{1}=\cfrac{1}{\Pi^{n}_{k=1}(4p_{k}^{2})}\r{Tr}_{\Pi^{n}_{k=1}(4p_{k}^{2})}\oplus 0$ and
$\tau^{(n)}_{2}=0\oplus \cfrac{1}{\Pi^{n}_{k=1}(4p_{k}^{2})}\r{Tr}_{\Pi^{n}_{k=1}(4p_{k}^{2})}$.  
Let $\varphi_{1},\varphi_{2}$ be different elements of $\partial_{e}\r{T}({\mathcal A})^{+}_{1}$ 
which satisfy 
\begin{eqnarray}
\varphi_{1}|_{\c{A}_{n}}=(\cfrac{1}{2}+ \cfrac{3\Pi^{n}_{k=1} p_{k}^{2}}{(\Pi^{n}_{k=1}(p_{k}^{2}-1))\pi^{2}})\tau^{(n)}_{1}\oplus (\cfrac{1}{2}- \cfrac{3\Pi^{n}_{k=1} p_{k}^{2}}{(\Pi^{n}_{k=1}(p_{k}^{2}-1))\pi^{2}})\tau^{(n)}_{2}\nonumber
\end{eqnarray}
and 
\begin{eqnarray}
\varphi_{2}|_{\c{A}_{n}}=(\cfrac{1}{2}- \cfrac{3\Pi^{n+1}_{k=1} p_{k}^{2}}{(\Pi^{n}_{k=1}(p_{k}^{2}-1))\pi^{2}})\tau^{(n)}_{1}\oplus (\cfrac{1}{2}+ \cfrac{3\Pi^{n}_{k=1} p_{k}^{2}}{(\Pi^{n}_{k=1}(p_{k}^{2}-1))\pi^{2}})\tau^{(n)}_{2}.\nonumber
\end{eqnarray}   
We compute the fundamental group of $\c{A}$ and we shall show \\
$\r{ F}(\c{A})=\Set{\left[ 
 \begin{array}{cc}
 2^{n}& 0 \\
 0& 2^{n} \\
 \end{array} 
 \right], 
 \left[ 
 \begin{array}{cc}
 0& 2^{n} \\
 2^{n}& 0 \\
 \end{array} 
 \right](n\in \m{Z})}$ 
and $F_{\r{det}}(\c{A})=\set{4^{n}:n\in \m{Z}}$.  
\begin{lem}\label{lem:examplee}
Let $\c{A}$ be the inductive limit of the above sequence.  
Let $\varphi_{1},\varphi_{2}$ as above.  
Then both $\varphi^*_{1}(K_{0}({\mathcal A}))$ and $\varphi^*_{2}(K_{0}({\mathcal A}))$ 
are the same additive group $E$.  
If there exists a positive real number $\lambda$ 
such that $\lambda E=E$, 
then $\lambda =2^{n}(n\in \m{Z})$.
\end{lem}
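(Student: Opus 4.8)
The plan is to push everything down to the dimension group $K_0(\c{A})=\varinjlim(\m{Z}^2,M_n)$ and then to an arithmetic problem about rank-one subgroups of $\m{R}$. First I would record the connecting maps: a rank-one projection in the first (resp.\ second) summand of $\c{A}_n$ is sent by $\psi_n$ to a projection of rank $4p_{n+1}^2-2$ in one target summand and of rank $2$ in the other, so $K_0(\psi_n)$ is the symmetric matrix $M_{n+1}=\begin{pmatrix}4p_{n+1}^2-2 & 2\\ 2 & 4p_{n+1}^2-2\end{pmatrix}$. Writing $A_n=4^n\prod_{k=1}^n p_k^2$ and $B_n=4^n\prod_{k=1}^n(p_k^2-1)$ and using $\tau_1^{(n)}=(1/A_n,0)$, $\tau_2^{(n)}=(0,1/A_n)$ on $K_0(\c{A}_n)=\m{Z}^2$, a short computation gives, for a class represented at level $n$ by $(x_1,x_2)$,
\[\varphi_1^*([x_1,x_2])=\frac{x_1+x_2}{2A_n}+\frac{3(x_1-x_2)}{\pi^2 B_n}.\]
(Compatibility $\varphi_1^*\circ M_{n+1}=\varphi_1^*$ reduces to $c_{n+1}(p_{n+1}^2-1)=c_n p_{n+1}^2$ for $c_n=\tfrac{3}{\pi^2}\prod_{k\le n}\tfrac{p_k^2}{p_k^2-1}$; since $c_n\uparrow\tfrac12$, the coefficients stay in $(0,1)$ and $\varphi_1$ is a genuine positive trace.) The equality $\varphi_1^*(K_0(\c{A}))=\varphi_2^*(K_0(\c{A}))=:E$ then needs no computation: the summand-flip $\gamma_n(a,b)=(b,a)$ commutes with $\psi_n$, hence induces an automorphism $\gamma$ of $\c{A}$ with $\varphi_2=\varphi_1\circ\gamma$, i.e.\ $\varphi_2^*([x_1,x_2])=\varphi_1^*([x_2,x_1])$; as $\gamma_*$ is an automorphism of $K_0$, the two images coincide.

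Next I would pin down $E$ as a subgroup of $\m{R}$. Since $\pi^2\notin\m{Q}$, the sum $\m{Q}\cdot 1+\m{Q}\cdot\pi^{-2}$ is direct, so the displayed formula shows $E\subset\m{Q}\oplus\m{Q}\pi^{-2}$, and I may apply the two coordinate projections $\Pi_1$ (rational part) and $\Pi_2$ ($\pi^{-2}$-part), both group homomorphisms. As $(x_1,x_2)$ ranges over $\m{Z}^2$ the integer $x_1+x_2$ ranges over all of $\m{Z}$, and likewise $x_1-x_2$; hence $\Pi_1(E)=\bigcup_n\frac1{2A_n}\m{Z}=:R$ and $\Pi_2(E)=\frac3{\pi^2}\bigcup_n\frac1{B_n}\m{Z}$. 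I would then read off the supernatural numbers of these groups: in $R$ the prime $2$ occurs in $2A_n=2^{2n+3}\prod_{2\le k\le n}p_k^2$ with exponent $2n+3\to\infty$, whereas every odd prime $p_j$ stabilises at exponent $2$; and in $\bigcup_n\frac1{B_n}\m{Z}$ every prime occurs to infinite order, so this group is all of $\m{Q}$. The last point is the one genuine number-theoretic input: for each prime $q$, Dirichlet's theorem gives infinitely many primes $p\equiv 1\ (\mathrm{mod}\ q)$, whence $q\mid p^2-1$ for infinitely many indices $k$ and $v_q(B_n)\to\infty$.

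Now take $\lambda>0$ with $\lambda E=E$. Multiplication by $\lambda$ preserves $\mathrm{span}_{\m{Q}}E=\m{Q}+\m{Q}\pi^{-2}$, so $\lambda\pi^{-2}\in\m{Q}+\m{Q}\pi^{-2}$; if the $\pi^{-2}$-coefficient of $\lambda$ were nonzero this would force $\pi^{-4}\in\m{Q}+\m{Q}\pi^{-2}$, i.e.\ a nontrivial relation $a\pi^4+b\pi^2-1=0$ with $a,b\in\m{Q}$, impossible since $\pi$ is transcendental. Hence $\lambda\in\m{Q}$. A rational $\lambda$ commutes with $\Pi_1$ and $\Pi_2$, so $\lambda E=E$ yields $\lambda R=R$ and $\lambda\,\Pi_2(E)=\Pi_2(E)$; the latter is automatic since $\Pi_2(E)$ is a $\m{Q}$-multiple of $\m{Q}$. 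For the former, writing $\lambda=\prod_p p^{e_p}$ and comparing $p$-adic valuations, $\lambda R=R$ forces $e_p=0$ at every prime of finite height in $R$ and leaves $e_p$ free only at primes of infinite height; by the previous paragraph that is exactly $p=2$. Therefore $\lambda=2^{e_2}$, i.e.\ $\lambda=2^n$ for some $n\in\m{Z}$.

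The main obstacle is isolating the prime $2$: the whole conclusion rests on the arithmetic bookkeeping showing that among the denominators produced by $\varphi_1^*$ only the factor $2$ (coming from $4^n$ together with $p_1=2$) accumulates without bound, while all odd primes saturate at valuation $2$, and simultaneously that the $\pi^{-2}$-part imposes no constraint because its denominators $B_n$ absorb every prime to infinite order. The transcendence of $\pi$ is used only once, and softly, to guarantee $\lambda\in\m{Q}$; the real content is the two valuation computations (for $A_n$ and for $B_n$), the second of which invokes Dirichlet's theorem on primes in arithmetic progressions.
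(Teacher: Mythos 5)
Your proof is correct, and its skeleton is the same as the paper's: both arguments first compute $E$ explicitly from the inductive system, then use transcendence of $\pi$ to force $\lambda\in\m{Q}$, and finish with prime-by-prime valuation bookkeeping showing that $2$ is the only prime that can occur in $\lambda$. The differences are in organization. Where the paper argues by contradiction on specific generators --- writing $\lambda=2^{a}\cdot\frac{l}{m}$ with $l,m$ odd and coprime, and showing that the generator $\frac{1}{2^{2n_{0}+1}\Pi^{n_{0}}_{k=1}p^{2}_{k}}+\frac{3}{4^{n_{0}}(\Pi^{n_{0}}_{k=1}(p_{k}^{2}-1))\pi^{2}}$ cannot lie in $\lambda E$ when $p_{n_{0}}\mid l$ (and symmetrically for $m$) --- you first project $E\subset\m{Q}\oplus\m{Q}\pi^{-2}$ onto its rational coordinate $R=\bigcup_{n}\frac{1}{2A_{n}}\m{Z}$ and run the valuation argument entirely inside $R$; this makes explicit the separation of the two coordinates that the paper uses only tacitly when it matches generators, and it packages the arithmetic as a statement about heights (the supernatural number $2^{\infty}\prod_{p\ \mathrm{odd}}p^{2}$ of $R$). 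Your handling of the first claim is also more explicit than the paper's: the flip automorphism $\gamma$ with $\varphi_{2}=\varphi_{1}\circ\gamma$ gives $\varphi_{2}^{*}(K_{0}(\c{A}))=\varphi_{1}^{*}(K_{0}(\c{A}))$ at once, whereas the paper simply asserts the two images coincide. Two small remarks: your Dirichlet-theorem computation of $\Pi_{2}(E)=\frac{3}{\pi^{2}}\m{Q}$ is superfluous, since your own argument never uses it ($\lambda R=R$ alone yields $\lambda=2^{n}$); and you prove only the forward implication, while the paper's proof also establishes the converse $2^{n}E=E$ --- the statement as quoted only asks for the forward direction, and the containment $\left[\begin{smallmatrix}2&0\\0&2\end{smallmatrix}\right]\in\r{F}(\c{A})$ needed later in Proposition \ref{pro:examplere} is obtained there directly from $\c{A}\cong M_{2}(\c{A})$, so this omission is harmless.
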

\begin{proof}
Then both $\varphi^*_{1}(K_{0}({\mathcal A}))$ and $\varphi^*_{2}(K_{0}({\mathcal A}))$ 
are the same additive group $E$ generated by 
$\cfrac{1}{2^{2n+1}\Pi^{n}_{k=1} p_{k}^{2}}\pm \cfrac{3}{4^{n}(\Pi^{n}_{k=1}(p_{k}^{2}-1))\pi^{2}}(n\in \mathbb{N})$. 
Since $1 \in  E$, 
there exist rational numbers $q_{1},q_{2},q_{3},q_{4}$ 
such that $\lambda=q_{1}+\cfrac{q_{2}}{\pi^{2}}$, 
$(q_{1}+\cfrac{q_{2}}{\pi^{2}})(q_{3}+\cfrac{q_{4}}{\pi^{2}})=1$.  
Since $\pi$ is a transcendental number, $q_{2}=q_{4}=0$.  
Then $\lambda$ is a rational number.  
Put $\lambda=2^{a}\cdot \cfrac{l}{m}$, where $a$ is an integer 
and $l,m$ are non-zero positive odd numbers satisfying $gcd(l,m)=1$.  
We will show $l=m=1$.  
Conversely, suppose $l\neq 1$.  
Then there exist a prime number $p_{n_{0}}$ and an integer $l_{1}$ such that $l=p_{n_{0}}l_{1}$.  
Since $gcd(l,m)=1$, $\cfrac{1}{2^{2n_{0}+1}\Pi^{n_{0}}_{k=1}p^{2}_{k}}+\cfrac{3}{4^{n_{0}}(\Pi^{n_{0}}_{k=1}(p_{k}^{2}-1))\pi^{2}} \not\in \lambda E$.  
This contradicts to the fact $\lambda E=E$.  
Suppose $m\neq 1$. Similarly, we can denote $m=p_{n_{1}}m_{1}$, 
where $p_{n_{1}}$ is a prime number and $m_{1}$ is an integer.   
Then $\cfrac{1}{m}(\cfrac{1}{2^{2n_{1}+1}\Pi^{n_{1}}_{k=1}p^{2}_{k}}+\cfrac{3}{4^{n_{1}}(\Pi^{n_{1}}_{k=1}(p_{k}^{2}-1))\pi^{2}}) \not\in E$.  
We next show $\lambda=2^{m}(m\in \m{Z})$, then $\lambda E=E$.  
It is sufficient to show the case $m>0$.  
Let $\alpha=\cfrac{1}{2^{2n+1}\Pi^{n}_{k=1} p_{k}^{2}}\pm \cfrac{3}{4^{n}(\Pi^{n}_{k=1}(p_{k}^{2}-1))\pi^{2}}$ be a generator of $E$.  
For an integer $n_{0}$ more than $\cfrac{m}{2}+n$, 
$\alpha=2^{m}\cdot (\cfrac{2^{2n_{0}-m-2n}\Pi^{n_{0}}_{k=n+1}p^{2}_{k}}{2^{2n_{0}+1}\Pi^{n_{0}}_{k=1} p_{k}^{2}}\pm \cfrac{3(2^{2n_{0}-m-2n}\Pi^{n_{0}}_{k=n+1}(p^{2}_{k}-1))}{4^{n_{0}}(\Pi^{n_{0}}_{k=1}(p_{k}^{2}-1))\pi^{2}})$
Therefore $\lambda E\supset E$.  
Obviously, $\lambda E\subset E$, then $\lambda E=E$.  
\end{proof}
\begin{pro}\label{pro:examplere}
Let $\c{A}$ be the inductive limit of the sequence in \ref{ex:simpleaf}.\\
Then $\r{ F}(\c{A})=\Set{\left[ 
 \begin{array}{cc}
 2^{n}& 0 \\
 0& 2^{n} \\
 \end{array} 
 \right], 
 \left[ 
 \begin{array}{cc}
 0& 2^{n} \\
 2^{n}& 0 \\
 \end{array} 
 \right](n\in \m{Z})}$ 
and $\r{F}_{\r{det}}(\c{A})=\set{4^{n}:n\in \m{Z}}$.  
\end{pro}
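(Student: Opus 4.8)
The plan is to pin down $\r{F}(\c{A})$ by combining the structural restriction from Proposition \ref{pro:Krelation} with the arithmetic computation of Lemma \ref{lem:examplee}. Proposition \ref{pro:Krelation} tells us that any $B\in\r{F}(\c{A})$ must have the form $B=DU(\sigma)$ with $D=\r{diag}(d_1,d_2)$ positive invertible and $\sigma\in S_2$, subject to the constraint $d_i\,\varphi_i^*(K_0(\c{A}))=\varphi_{\sigma(i)}^*(K_0(\c{A}))$. Since Lemma \ref{lem:examplee} shows that both $\varphi_1^*(K_0(\c{A}))$ and $\varphi_2^*(K_0(\c{A}))$ equal the \emph{same} additive group $E$, this constraint collapses to $d_i E=E$ for each $i$, independently of whether $\sigma$ is the identity or the transposition. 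By the second half of Lemma \ref{lem:examplee}, $d_iE=E$ with $d_i>0$ forces $d_i=2^{n_i}$ for some $n_i\in\m{Z}$. Thus every element of $\r{F}(\c{A})$ lies in the claimed set, giving the inclusion $\r{F}(\c{A})\subset(\r{RHS})$.

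For the reverse inclusion I must realize each candidate matrix as an actual $R_{\c{A}}([\c{E}])$, i.e.\ exhibit self-similar pairs producing it. First I would handle the transposition: the flip automorphism $\gamma$ of $\c{A}$ swapping the two summands (which exists because the connecting maps $\psi_n$ are symmetric under interchanging the two blocks together with conjugation by a permutation unitary) induces, via $[\c{E}_\gamma]\in\r{Pic}(\c{A})$, the matrix $\left[\begin{smallmatrix}0&1\\1&0\end{smallmatrix}\right]$. Next I would produce the scaling $\r{diag}(2,2)$: compressing $\c{A}$ by a projection $p\in M_2(\c{A})$ with $pM_2(\c{A})p\cong\c{A}$ and $(\r{Tr}_2\otimes\varphi_i)\circ\Phi=2\varphi_i$ realizes $2I_2$; such a $p$ exists because $\c{A}$ absorbs $M_{2^\infty}$-type halving at each level of the inductive system (the matrix sizes $\Pi_{k=1}^n(4p_k^2)$ carry a factor of $4^n$, so the requisite half-sized projection is available). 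Taking tensor powers and inverses of these two generators in the Picard group, together with their products, yields every matrix of the form $2^nU(\sigma)$, establishing $(\r{RHS})\subset\r{F}(\c{A})$.

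The determinant statement then follows immediately: applying $|\r{det}|$ to each element of $\r{F}(\c{A})$ gives $|\r{det}(2^nU(\sigma))|=2^{2n}=4^n$, so $\r{F}_{\r{det}}(\c{A})=\set{4^n:n\in\m{Z}}$ by Definition \ref{def:fund}.

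I expect the main obstacle to be the reverse inclusion, specifically the explicit construction of the self-similar pair $(p,\Phi)$ realizing the scaling by $2$. Verifying $pM_2(\c{A})p\cong\c{A}$ at the level of the inductive limit requires checking compatibility of the compression with the connecting maps $\psi_n$ and confirming that the resulting trace transformation is exactly multiplication by $2$ on each $\varphi_i$; this is where the precise choice of matrix sizes $4p_k^2$ in Example \ref{ex:simpleaf} does the work, and the argument must use that the halving projection can be chosen compatibly across all stages. The restriction direction, by contrast, is essentially forced once Lemmas \ref{lem:examplee} and Proposition \ref{pro:Krelation} are in hand.
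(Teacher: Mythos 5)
Your forward inclusion has a genuine gap, and it sits exactly where the paper has to work hardest. Proposition \ref{pro:Krelation} combined with Lemma \ref{lem:examplee} gives only that \emph{each} diagonal entry separately satisfies $d_{i}E=E$, hence $d_{i}=2^{n_{i}}$ with \emph{independent} exponents $n_{1},n_{2}\in\m{Z}$; that is, it yields $\r{F}(\c{A})\subset\set{\r{diag}(2^{n_{1}},2^{n_{2}})U(\sigma): n_{1},n_{2}\in\m{Z},\ \sigma\in S_{2}}$, a group strictly larger than the claimed one (its determinant group would be $\set{2^{m}:m\in\m{Z}}$ rather than $\set{4^{n}:n\in\m{Z}}$). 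Your sentence ``Thus every element of $\r{F}(\c{A})$ lies in the claimed set'' is therefore a non sequitur: nothing in the K-theoretic constraint couples the two exponents, and indeed for $M_{2^{\infty}}\oplus M_{2^{\infty}}$ (Corollary \ref{cor:dsum}) the same constraint is attained by all matrices $\r{diag}(2^{k},2^{l})U(\sigma)$. The paper closes this gap with a separate argument, which is the real content of the proposition: if $\r{diag}(a,b)\in\r{F}(\c{A})$, the associated bimodule gives a projection whose two trace values are $a$ and $b$; this projection can be taken, up to the inductive-limit structure, in a finite stage $M_{n}(\c{A}_{i})$, where it splits as $p^{1}_{0}\oplus p^{2}_{0}$, and then $a=\frac{1}{2}(q_{1}+q_{2})+c_{i}(q_{1}-q_{2})$ and $b=\frac{1}{2}(q_{1}+q_{2})-c_{i}(q_{1}-q_{2})$ with $q_{1},q_{2}$ rational and $c_{i}$ a nonzero rational multiple of $1/\pi^{2}$. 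Since $a$ and $b$ are powers of $2$ by the K-theory step, hence rational, the transcendence of $\pi$ forces $q_{1}=q_{2}$, so $a=b$. This is precisely why the traces in Example \ref{ex:simpleaf} are built with the $\pi^{2}$ terms, and no version of this argument appears in your proposal.

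On the reverse inclusion your flip-automorphism argument agrees with the paper's. For $2I_{2}$, however, ``such a $p$ exists because $\c{A}$ absorbs $M_{2^{\infty}}$-type halving'' is an assertion, not a proof; the paper makes it precise by constructing a second inductive limit $\c{B}$ from the blocks $M_{\Pi^{n}_{k=1}(2p_{k}^{2})}(\m{C})\oplus M_{\Pi^{n}_{k=1}(2p_{k}^{2})}(\m{C})$ and showing $\c{A}\cong M_{2^{\infty}}\otimes\c{B}$, whence $\c{A}\cong M_{2}(\c{A})$ and $2I_{2}\in\r{F}(\c{A})$. That construction is concrete and fills in what you left schematic; but the step your proposal flags as ``the main obstacle'' is not the hard part. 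The hard part is the equality of the two diagonal exponents, which your proof omits entirely, and without it neither $\r{F}(\c{A})$ nor $\r{F}_{\r{det}}(\c{A})$ is determined.
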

\begin{proof}
We first show $\left[ 
 \begin{array}{cc}
 0& 1 \\
 1& 0 \\
 \end{array} 
 \right]\in \r{ F}(\c{A})$.  
Let $\alpha_{n}$ be an automorphism 
from ${\mathcal A}_{n}$ onto ${\mathcal A}_{n}$ 
such that $\alpha_{n}((a,b))=(b,a)$.  
Since $\psi_{n}\circ \alpha_{n}=\alpha_{n+1}\circ \psi_{n}$ for any $n\in \mathbb{N}$, 
there exists an automorphism $\alpha:{\mathcal A}\rightarrow {\mathcal A}$ 
such that $\alpha|_{{\mathcal A}_{n}}=\alpha_{n}$.  
Therefore $\varphi_{2}=\varphi_{1}\circ \alpha$ 
and $\varphi_{1}=\varphi_{2}\circ \alpha$.  
On the other hand, 
$\r{F}({\mathcal A})\subset \Set{\left[ 
 \begin{array}{cc}
 2^{n}& 0 \\
 0& 2^{m} \\
 \end{array} 
 \right], 
 \left[ 
 \begin{array}{cc}
 0& 2^{n} \\
 2^{m}& 0 \\
 \end{array} 
 \right](n,m\in \m{Z})}$ by \ref{pro:Krelation} and \ref{lem:examplee}.  
We next show that if $\left[ 
 \begin{array}{cc}
 a& 0 \\
 0& b \\
 \end{array} 
 \right]\in \r{ F}(\c{A})$, then $a=b$.  
Let $\c{E}$ be an $\c{A}$-$\c{A}$ imprimitivity bimodule 
such that $R_{\c{A}}([\c{E}])\varphi_{1}=a\varphi_{1}$ and $R_{\c{A}}([\c{E}])\varphi_{2}=b\varphi_{2}$.  
Then $\sum^{n}_{i=1}\varphi_{1}(\braket{\xi_{i},\xi_{i}}_{\c{A}})=a$ 
and $\sum^{n}_{i=1}\varphi_{2}(\braket{\xi_{i},\xi_{i}}_{\c{A}})=b$, 
where $\set{\xi_{i}}^{n}_{i=1}$ is a basis of $\c{E}$.  
Put a projection $p=(\braket{\xi_{i},\xi_{j}}_{\c{A}})_{ij}$ on $M_{n}(\c{A})$.  
Then $\r{Tr}_{n}\otimes \varphi_{1}(p)=a$ and $\r{Tr}_{n}\otimes \varphi_{2}(p)=b$.  
Since $M_{n}(\c{A})=\lim_{i\rightarrow \infty}M_{n}(\c{A}_{i})$, 
there exist a projection $p_{0}$ in $M_{n}(\c{A}_{i})$ 
such that $\r{Tr}_{n}\otimes \varphi_{1}|_{\c{A}_{i}}
(p_{0})=a$ and $\r{Tr}_{n}\otimes \varphi_{2}|_{\c{A}_{i}}(p_{0})=b$.  
Put $p_{0}=p^{1}_{0}\oplus p^{2}_{0}$, 
where $p^{1}_{0}$ and $p^{2}_{0}$ are projections of $M_{n\Pi^{i}_{k=1}4p^{2}_{k}}(\m{C})$.  
Then $a=\r{Tr}_{n}\otimes (\cfrac{1}{2}+ \cfrac{3\Pi^{i+1}_{k=1} p_{k}^{2}}{(\Pi^{i+1}_{k=1}(p_{k}^{2}-1))\pi^{2}})\cdot \cfrac{1}{\Pi^{i}_{k=1}(4p_{k}^{2})}\r{Tr}_{\Pi^{i}_{k=1}(4p_{k}^{2})}(p_{1})+\r{Tr}_{n}\otimes (\cfrac{1}{2}- \cfrac{3\Pi^{i+1}_{k=1} p_{k}^{2}}{(\Pi^{i}_{k=1}(p_{k}^{2}-1))\pi^{2}})\cdot \cfrac{1}{\Pi^{i}_{k=1}(4p_{k}^{2})}\r{Tr}_{\Pi^{i}_{k=1}(4p_{k}^{2})}(p_{2})$ and $b=\r{Tr}_{n}\otimes (\cfrac{1}{2}- \cfrac{3\Pi^{i+1}_{k=1} p_{k}^{2}}{(\Pi^{i}_{k=1}(p_{k}^{2}-1))\pi^{2}})\cdot \cfrac{1}{\Pi^{i}_{k=1}(4p_{k}^{2})}\r{Tr}_{\Pi^{i}_{k=1}(4p_{k}^{2})}(p_{1})+\r{Tr}_{n}\otimes (\cfrac{1}{2}+ \cfrac{3\Pi^{i+1}_{k=1} p_{k}^{2}}{(\Pi^{i}_{k=1}(p_{k}^{2}-1))\pi^{2}})\cdot \cfrac{1}{\Pi^{i}_{k=1}(4p_{k}^{2})}\r{Tr}_{\Pi^{i}_{k=1}(4p_{k}^{2})}(p_{2})$.  
Let $q_{1},q_{2}$ be rational numbers.  
Since $a,b$ are rational numbers, if $(\cfrac{1}{2}+ \cfrac{3\Pi^{n}_{k=1} p_{k}^{2}}{(\Pi^{n}_{k=1}(p_{k}^{2}-1))\pi^{2}})q_{1}+(\cfrac{1}{2}- \cfrac{3\Pi^{n}_{k=1} p_{k}^{2}}{(\Pi^{n}_{k=1}(p_{k}^{2}-1))\pi^{2}})q_{2}=a$ 
and $(\cfrac{1}{2}- \cfrac{3\Pi^{n}_{k=1} p_{k}^{2}}{(\Pi^{n}_{k=1}(p_{k}^{2}-1))\pi^{2}})q_{1}+(\cfrac{1}{2}- \cfrac{3\Pi^{n}_{k=1} p_{k}^{2}}{(\Pi^{n}_{k=1}(p_{k}^{2}-1))\pi^{2}})q_{2}=b$, 
then $a=b$.  
Therefore $a=b$.  
Finally, we show $\left[ 
 \begin{array}{cc}
 2& 0 \\
 0& 2 \\
 \end{array} 
 \right]\in \r{ F}(\c{A})$.  
Let ${\mathcal B}_{n}=M_{\Pi^{n}_{k=1}(2p_{k}^{2})}\oplus M_{\Pi^{n}_{k=1}(2p_{k}^{2})}$.  
We define ${}^*$-homomorphisms $\phi_{n} :A_{n}\rightarrow A_{n+1}$ \\
by $\phi_{n}((a,b))=(\r{diag}(a,a,\cdots,a,b),\r{diag}(b,b,\cdots,b,a))$.  
Let ${\mathcal B}$ be the inductive limit of the sequence
\[\begin{CD}
      {\mathcal B}_{1} @>\phi_{1}>> {\mathcal B}_{2} @>\phi_{2}>> {\mathcal B}_{3} @>>> \cdots  \end{CD}. \]
Then $\c{A}$ is isomorphic to $M_{2^{\infty}}\otimes B$. 
Therefore $\c{A}$ is isomorphic to $M_{2}(\c{A})$.  

\end{proof}

\begin{ex}\label{ex:prime2}
Let $p>2$ be a prime number.  
Put ${\mathcal B}_{n}=M_{\Pi^{n}_{k=1}p^{k-1}}(\m{C})\oplus M_{\Pi^{n}_{k=1}p^{k-1}}(\m{C})$.  
We define ${}^*$-homomorphisms $\psi_{n} :B_{n}\rightarrow B_{n+1}$\\
by $\psi_{n}((a,b))=(\r{diag}(a,\cdots,a,b,\cdots,b),\r{diag}(b,\cdots,b,a,\cdots,a))$, 
where the multiplicities of $a$ are $p^{n}-2^{n-1}$ and $2^{n-1}$ respectively and 
the one of $b$ are $2^{n-1}$ and $p^{n}-2^{n-1}$ respectively.  
Let ${\mathcal B}$ be the inductive limit of the sequence
\[\begin{CD}
      {\mathcal B}_{1} @>\psi_{1}>> {\mathcal B}_{2} @>\psi_{2}>> {\mathcal B}_{3} @>>> \cdots  \end{CD}. \]
Put $\c{A}=M_{p^{\infty}}\otimes\c{B}$.  
Then $\c{A}$ is simple and dim$\r{ T}(\c{A})=2$.  
Since $\frac{2}{p}$ is an algebraic number, \\ 
$\Pi^{\infty}_{n=1}(1-(\frac{2}{p})^{n})$ is a transcendental number.  
As in the same proof of the previous example, \\
$\r{F}({\mathcal A})= \Set{\left[ 
 \begin{array}{cc}
 p^{n}& 0 \\
 0& p^{n} \\
 \end{array} 
 \right], 
 \left[ 
 \begin{array}{cc}
 0& p^{n} \\
 p^{n}& 0 \\
 \end{array} 
 \right](n\in \m{Z})}$ and $\r{F}_{\r{det}}(\c{A})=\set{p^{2n}: n\in \m{Z}}$.  
\end{ex}

\begin{ex}\label{ex:matrix}
Let $\{n_{i}\}^{m}_{i=1}$ be a finite subset of $\mathbb{N}$ 
such that $1\leq n_{1}$ and $n_{i}\leq n_{i+1}$ for any $i$.  
Put ${\mathcal B}=\oplus ^{m}_{i=1}M_{n_{i}}(\mathbb{C})$.   
We denote by $\varphi_{i}$ the normalized trace on $M_{n(i)}(\mathbb{C})$.  
We show 
\begin{eqnarray}
\r{ F}({\mathcal B})=\{ DU(\sigma) : D_{ii}=\cfrac{n(i)}{n(\sigma(i))}\ \sigma\in S_{N}  \}
\end{eqnarray}
Put ${\mathcal A}=\mathbb{C}^{m}$. 
The $C^*$-algebras ${\mathcal A}$ and ${\mathcal B}$ are Morita equivalent 
and $\oplus ^{m}_{i=1}M_{1,n_{i}}(\mathbb{C})$, 
we write by ${\mathcal F}$, 
is an ${\mathcal A}$-${\mathcal B}$ imprimitivity bimodule.  
Then the representation matrix $P$ of $R_{{\mathcal A}{\mathcal B}}({\mathcal F})$ is a diagonal matrix 
and $P_{ii}=n(i)$.  
By \ref{pro:morita}, $\r{ F}({\mathcal B})
=R_{{\mathcal B}{\mathcal A}}({\mathcal F}^*)\{U(\sigma): \sigma \in S_{N}\}R_{{\mathcal A}{\mathcal B}}({\mathcal F})$.  
Especially, if $\c{A}=\m{C}^{2}$ and ${\mathcal B}=M_{2}(\mathbb{C})\oplus M_{3}(\mathbb{C})$, \\
then $\r{ F}(\c{A})=\Set{\left[ 
 \begin{array}{cc}
 1& 0 \\
 0& 1 \\
 \end{array} 
 \right], 
 \left[ 
 \begin{array}{cc}
 0& 1 \\
 1& 0 \\
 \end{array} 
 \right]
}$, \\
$\r{ F}({\mathcal B})=\left[ 
 \begin{array}{cc}
 \cfrac{1}{2}& 0 \\
 0&  \cfrac{1}{3} \\
 \end{array} 
 \right]\r{ F}(\c{A})\left[ 
 \begin{array}{cc}
 2& 0 \\
 0& 3 \\
 \end{array} 
 \right]
=\Set{\left[ 
 \begin{array}{cc}
 1& 0 \\
 0& 1 \\
 \end{array} 
 \right], 
 \left[ 
 \begin{array}{cc}
 0& \cfrac{3}{2} \\
 \cfrac{2}{3}& 0 \\
 \end{array} 
 \right]
}$ and \\
$\r{ F}_{\r{det}}(\c{A})=\r{ F}_{\r{det}}(\c{B})=\{1\}$.
\end{ex}

\begin{ex}\label{ex:irr}
Let $\theta$ be an irrational number. 
The irrational rotation algebra ${\mathcal A}_{\theta }$ 
has a unique normalized trace $\varphi_{\theta }$.  
The irrational rotation algebras ${\mathcal A}_{\theta },{\mathcal A}_{\eta }$ 
are Morita equivalent if and only if $\eta=\cfrac{a\theta +b}{c\theta +d}$,
$\left[ 
 \begin{array}{cc}
 a& b \\
 c& d \\
 \end{array} 
 \right] \in$ GL${}_{2}(\mathbb{Z})$ 
where GL${}_{2}(\mathbb{Z})$ is the set of integer-valued $2\times 2$ matrices $A$ 
which satisfy $\r{det}A=1$ or $-1$.  
If $X$ be an ${\mathcal A}_{\eta}$-${\mathcal A}_{\theta }$ imprimitivity bimodule 
with ${\mathcal A}_{\eta}$-valued left inner product ${}_{{\mathcal A}_{\eta}}\langle\cdot ,\cdot \rangle$ 
and with ${\mathcal A}_{\theta }$-valued right inner product $\langle\cdot ,\cdot \rangle_{{\mathcal A}_{\theta }}$, 
we have the following equation; $\varphi_{\theta }(\langle \xi ,\zeta \rangle_{{\mathcal A}_{\theta }})=|c\theta +d|\varphi_{\eta  }({}_{{\mathcal A}_{\eta }}\langle \zeta ,\xi \rangle)$.  
These facts can be found in \cite{Rie1} and \cite{Rie3}.  
Put ${\mathcal B}={\mathcal A}_{\theta }\oplus {\mathcal A}_{\eta }$ and ${\mathcal C}={\mathcal A}_{\theta }\oplus {\mathcal A}_{\theta }$.  
Since ${\mathcal A}_{\theta }\oplus X$ is a ${\mathcal B}$-${\mathcal C}$ imiprimitivity bimodule, $\r{ F}({\mathcal B})=
\left[ 
 \begin{array}{cc}
 1& 0 \\
 0& |c\theta +d| \\
 \end{array} 
 \right]\r{ F}({\mathcal C})
\left[ 
 \begin{array}{cc}
 1& 0 \\
 0& \cfrac{1}{|c\theta +d|} \\
 \end{array} 
 \right] 
$ and $\r{ F}_{\r{det}}(\c{B})=\r{ F}_{\r{det}}(\c{C})$ by the equation.  
From \ref{cor:dsum} and corollary 3.18 of \cite{NY}, we can calculate $\r{ F}({\mathcal B})$ explicitly.  \\
Especially let $\theta =\sqrt[]{\mathstrut 5}$ and $\eta=\cfrac{1}{\sqrt[]{\mathstrut 5}}$, \\
then $\r{ F}({\mathcal B})=\{
\left[ 
 \begin{array}{cc}
 (\sqrt[]{\mathstrut 5}+2)^{n}& 0 \\
 0& (\sqrt[]{\mathstrut 5}+2)^{m} \\
 \end{array} 
 \right], 
\left[ 
 \begin{array}{cc}
 0& \cfrac{1}{\sqrt[]{\mathstrut 5}}(\sqrt[]{\mathstrut 5}+2)^{n} \\
\sqrt[]{\mathstrut 5}(\sqrt[]{\mathstrut 5}+2)^{m} & 0 \\
 \end{array} 
 \right] \\
: n,m \in \mathbb{Z} \}$ and $\r{ F}_{\r{det}}(\c{B})=\set{(\sqrt[]{\mathstrut 5}+2)^{n}:n\in \m{Z}}$.  
Let $\theta $ be a non-quadratic number and $\eta=\cfrac{a\theta +b}{c\theta +d}$, 
then $\r{ F}({\mathcal B})=\{
\left[ 
 \begin{array}{cc}
 1& 0 \\
 0& 1 \\
 \end{array} 
 \right], 
\left[ 
 \begin{array}{cc}
 0& \cfrac{1}{|c\theta +d|} \\
 |c\theta  +d|& 0 \\
 \end{array} 
 \right]\}$ and $\r{ F}_{\r{det}}(\c{B})=\{1\}$ .
\end{ex}

Put $G_{j}=\set{\cfrac{m}{\Pi^{k}_{i=1}p^{j}_{i}}:m\in\m{Z},\ k\in\m{N}}$.  
Then $G_{j}$ is an additive dense subgroup of $\m{R}$.  

\begin{ex}
Let $n$ be a natural number.  \\
Put $G=\Pi^{n}_{l=1}G_{l}$ and $G^{+}=\set{(g_{1},g_{2},\cdots,g_{n}): g_{i}>0 }\cup {0}$, where $0$ is the additive unit of $\m{R}^{n}$.  
Then $(G,G^{+})$ is unperforated and has the Riesz interpolation property.  
We denote by $\c{A}_{unit}$ the unital simple $AF$-algebra 
the triples of which are isomorphic to $(G,G^{+},(1,1,\cdots,1))$.  
Then $\r{ F}(\c{A}_{unit})=\set{I_{n}}$ and $\r{ F}_{\r{det}}(\c{A}_{unit})=\set{1}$, where $\r{I}_{n}$ is a unit of $M_{n}(\m{C})$.  
\end{ex}

\begin{ex}
Let $n$ be a natural number.  \\
Put $G=\Pi^{n}_{l=1}G_{1}$ and $G^{+}=\set{(g_{1},g_{2},\cdots,g_{n}): g_{i}>0 }\cup {0}$, where $0$ is the additive unit of $\m{R}^{n}$.  
Then $(G,G^{+})$ is unperforated and has the Riesz interpolation property.  
We denote by $\c{A}_{S_{n}}$ the unital simple $AF$-algebra 
the triples of which are isomorphic to $(G,G^{+},(1,1,\cdots,1))$.  
Then $\r{ F}(\c{A}_{S_{n}})=U(S_{n})$.  
\end{ex}
\begin{ex}
Let $\theta$ be a non-quadratic number.  
Put $G_{\theta}=(G_{1}+\theta G_{1})\oplus (G_{1}+\theta G_{1})$ and $G^{+}_{\theta}=\set{(g,h):g>0,h>0}\cup {0}$.  
Then $(G_{\theta},G^{+}_{\theta})$ is unperforated and has the Riesz interpolation property.  
Moreover, $(1,1)$ and $(1,\theta)$ are order units of $(G_{\theta},G^{+}_{\theta})$ .  
We denote by $\c{A}_{(1,1)}$ and by $\c{A}_{(1,\theta)}$ the unital simple $AF$-algebras 
the triples of which are isomorphic to $(G_{\theta},G^{+}_{\theta},(1,1))$ and $(G_{\theta},G^{+}_{\theta},(1,\theta))$ respectively.  
Then $\c{A}_{(1,1)}$ and $\c{A}_{(1,\theta)}$ are Morita equivalent.  
Moreover, $\r{ F}(\c{A}_{(1,1)})=\Set{\left[ 
 \begin{array}{cc}
 1& 0 \\
 0& 1 \\
 \end{array} 
 \right], 
 \left[ 
 \begin{array}{cc}
 0& 1 \\
 1& 0 \\
 \end{array} 
 \right]
}$, $\r{ F}(\c{A}_{(1,\theta)})=\{
\left[ 
 \begin{array}{cc}
 1& 0 \\
 0& 1 \\
 \end{array} 
 \right], 
\left[ 
 \begin{array}{cc}
 0& \theta \\
 \cfrac{1}{\theta}& 0 \\
 \end{array} 
 \right]
\}
$ and $\r{ F}_{\r{det}}(\c{A}_{(1,1)})=\r{ F}_{\r{det}}(\c{A}_{(1,\theta)})=\set{1}$.  
\end{ex}

\begin{ex}\label{ex:irratio2}
Let $\theta$ be a quadratic number.  
Put $\bar G_{\theta}=(G_{1}+\theta G_{1}+\pi G_{1})\oplus (G_{1}+\theta G_{1}+\pi G_{1})$ and $\bar G^{+}_{\theta}=\set{(g,h):g>0,h>0}\cup {0}$.  
Then $(\bar G_{\theta},\bar G^{+}_{\theta})$ is unperforated and has the Riesz interpolation property.  
Moreover, $(1,1)$ and $(1,\theta)$ are order units of $(\bar G_{\theta},\bar G^{+}_{\theta})$.  
We denote by $\c{B}_{(1,1)}$ and by $\c{B}_{(1,\theta)}$ the unital simple $AF$-algebras 
the triples of which are isomorphic to $(\bar G_{\theta},\bar G^{+}_{\theta},(1,1))$ and $(\bar G_{\theta},\bar G^{+}_{\theta},(1,\theta))$ respectively.  
Then $\c{B}_{(1,1)}$ and $\c{B}_{(1,\theta)}$ are Morita equivalent.  
Moreover, $\r{ F}(\c{B}_{(1,1)})=\Set{\left[ 
 \begin{array}{cc}
 1& 0 \\
 0& 1 \\
 \end{array} 
 \right], 
 \left[ 
 \begin{array}{cc}
 0& 1 \\
 1& 0 \\
 \end{array} 
 \right]
}$, $\r{ F}(\c{B}_{(1,\theta)})=\{
\left[ 
 \begin{array}{cc}
 1& 0 \\
 0& 1 \\
 \end{array} 
 \right], 
\left[ 
 \begin{array}{cc}
 0& \theta \\
 \cfrac{1}{\theta}& 0 \\
 \end{array} 
 \right]
\}
$ and $\r{ F}_{\r{det}}(\c{A}_{(1,1)})=\r{ F}_{\r{det}}(\c{A}_{(1,\theta)})=\set{1}$.  
\end{ex} 

Hence next proposition follows.

\begin{thm}\label{thm:Z2}
Let $G$ be a subgroup of $GL_{2}(\m{R})$ whose elements are represented as $DU(\sigma)$, 
where $D$ is a positive diagonal matrix and $\sigma\in S_{n}$.  
If $G$ is isomorphic to $\m{Z}_{2}$ as a group, 
then there exists a simple $AF$-algebra $\c{A}$ such that $\r{ F}(\c{A})=G$.  
\end{thm}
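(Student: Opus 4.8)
The plan is first to determine the shape of $G$ and then to realize it through a dimension group. Since $G\subset GL_2(\m{R})$ we have $n=2$, so the nontrivial element of $G$ is $A=DU(\sigma)$ with $D=\r{diag}(d_1,d_2)$, $d_1,d_2>0$, and $\sigma\in S_2$. If $\sigma=\r{id}$ then $A=D$ and $A^2=I_2$ forces $d_1=d_2=1$, i.e. $A=I_2$, which is impossible; hence $\sigma$ is the transposition and $A=\left[\begin{smallmatrix}0&d_1\\ d_2&0\end{smallmatrix}\right]$. The relation $A^2=I_2$ now gives $d_1d_2=1$, so putting $d=d_1>0$ we obtain $G=\set{I_2,\ \left[\begin{smallmatrix}0&d\\ 1/d&0\end{smallmatrix}\right]}$. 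It therefore suffices to construct, for each $d>0$, a simple $AF$-algebra $\c{A}$ with $\r{dim}\r{T}(\c{A})=2$ and $\r{F}(\c{A})=G$.

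I would build $\c{A}$ from its dimension group by Elliott's theorem, exactly in the manner of Examples \ref{ex:irr}--\ref{ex:irratio2}. Fix a dense subgroup $H\subset\m{R}$ with $1\in H$ (chosen below) and let $\c{A}_u$ be the unital simple $AF$-algebra with dimension group $K_0=H\oplus H$, positive cone $\set{(g,h):g>0,h>0}\cup\set{0}$, and order unit $u$. The two extremal traces of $\c{A}_u$ are the normalized coordinate traces, so for $u=(1,d)$ one has $\varphi_1^*(K_0)=H$ and $\varphi_2^*(K_0)=dH$, and $\c{A}_{(1,1)}$ is Morita equivalent to $\c{A}_{(1,d)}$. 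The coordinate flip $(a,b)\mapsto(b,a)$ is an order automorphism of $(K_0,K_0^+)$ fixing the unit $(1,1)$, whence $\left[\begin{smallmatrix}0&1\\ 1&0\end{smallmatrix}\right]\in\r{F}(\c{A}_{(1,1)})$; transporting this element through the Morita equivalence by \ref{pro:morita} (conjugation by $\r{diag}(1,d)$) gives $\left[\begin{smallmatrix}0&d\\ 1/d&0\end{smallmatrix}\right]\in\r{F}(\c{A}_{(1,d)})$, so $G\subset\r{F}(\c{A}_{(1,d)})$.

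For the reverse inclusion I would apply Proposition \ref{pro:Krelation}: every $B\in\r{F}(\c{A}_{(1,d)})$ equals $D'U(\tau)$ with $D'_{ii}\varphi_i^*(K_0)=\varphi_{\tau(i)}^*(K_0)$. With $\varphi_1^*(K_0)=H$ and $\varphi_2^*(K_0)=dH$, the case $\tau=\r{id}$ requires $D'_{11}\in\Lambda(H)$ and $D'_{22}\in\Lambda(H)$, and the case $\tau=(12)$ requires $D'_{11}\in d\,\Lambda(H)$, where $\Lambda(H)=\set{\lambda>0:\lambda H=H}$ is the multiplier group of $H$. Hence, as soon as $\Lambda(H)=\set{1}$, the identity case forces $B=I_2$ and the transposition case forces $B=\left[\begin{smallmatrix}0&d\\ 1/d&0\end{smallmatrix}\right]$, giving $\r{F}(\c{A}_{(1,d)})\subset G$ and so equality.

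The one real obstacle is to choose $H$ so that $\Lambda(H)=\set{1}$ for every $d>0$; this is precisely the difficulty flagged in Example \ref{ex:irr}, where a careless choice manufactures an infinite multiplier group (the units $(\sqrt5+2)^n$). I would split on whether $d^2\in\m{Q}+\m{Q}d$. If $d$ is neither rational nor a quadratic irrational, then $d^2\notin\m{Q}+\m{Q}d$ and I would take $H=G_1+dG_1$: any $\lambda\in\Lambda(H)$ lies in $H$ with $\lambda^{-1}\in H$, and writing $\lambda=p+qd$ the membership $\lambda d\in H$ forces $q=0$ because there is no quadratic relation for $d$, reducing $\lambda$ to a unit of $G_1$, which is trivial. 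If instead $d$ is rational or quadratic, then $G_1+dG_1$ carries extra multipliers, so I would adjoin a transcendental line and take $H=G_1+dG_1+\pi G_1$ as in Example \ref{ex:irratio2}; since $d$ is then algebraic, $\pi$ is transcendental over the number field $\m{Q}(d)$, and a transcendence argument in the style of Lemma \ref{lem:examplee} --- comparing coefficients of a putative multiplier along $1,d,\pi$ --- again forces $\Lambda(H)=\set{1}$. These two cases exhaust all $d>0$ and complete the proof.
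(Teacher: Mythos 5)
Your proposal is correct and takes essentially the same route as the paper: the paper's proof of Theorem \ref{thm:Z2} is simply an appeal to the preceding examples, which are exactly your construction --- the dimension groups $(G_{1}+dG_{1})\oplus(G_{1}+dG_{1})$, resp.\ $(G_{1}+dG_{1}+\pi G_{1})\oplus(G_{1}+dG_{1}+\pi G_{1})$ for algebraic $d$, with order units $(1,1)$ and $(1,d)$, the coordinate flip plus Morita transport (Proposition \ref{pro:morita}) for the lower bound, and Proposition \ref{pro:Krelation} together with triviality of the multiplier group of $H$ for the upper bound. If anything, your treatment is slightly more complete than the paper's, since you explicitly dispose of rational $d\neq 1$ by adjoining $\pi G_{1}$, a case the paper's examples cover only under a generous reading of ``non-quadratic number,'' and the remaining discrepancy (your $\varphi_{2}^{*}(K_{0})=dH$ versus $\tfrac{1}{d}H$, hence $d$ versus $1/d$ in the off-diagonal entries) is an immaterial convention choice, as the fundamental group is defined only up to permutation of the index.
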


\begin{ex}\label{ex:zid}
Let $n$ be a natural number.  
Let $\theta_{1},\cdots,\theta_{n}$ be non-quadratic numbers which satisfy $\theta_{i}\neq\cfrac{a+b\theta_{j}}{c+d\theta_{j}}$ for any $a,b,c,d$ in $\m{Z}$ and for any $i,j$.  
Put $G_{\set{\theta_{1},\cdots,\theta_{n}}}=\oplus^{n}_{i=1}(\m{Z}+\theta_{i}\m{Z})$ and 
$G^{+}_{\set{\theta_{1},\cdots,\theta_{n}}}=\set{(g_{1},\cdots,g_{n}):g_{i}>0}\cup \set{0}$.  \\
Then $(G_{\set{\theta_{1},\cdots,\theta_{n}}},G^{+}_{\set{\theta_{1},\cdots,\theta_{n}}})$ is unperforated and has the Riesz interpolation property.  
Moreover, $(1,\cdots,1)$ is an order unit of $(G_{\set{\theta_{1},\cdots,\theta_{n}}},G^{+}_{\set{\theta_{1},\cdots,\theta_{n}}})$.  
We denote by $\c{A}_{\set{\theta_{1},\cdots,\theta_{n}}}$ the unital simple $AF$-algebra 
the triple of which is isomorphic to $(G_{\set{\theta_{1},\cdots,\theta_{n}}},G^{+}_{\set{\theta_{1},\cdots,\theta_{n}}},(1,\cdots,1))$.  
Then $\r{ F}(\c{A}_{\set{\theta_{1},\cdots,\theta_{n}}})=\set{\r{I}_{n}}$.  
\end{ex}

By using \ref{ex:zid}, we show the followimg proposition.  

\begin{thm}
For any natural number $n$, 
there exist uncountably many mutually nonisomorphic simple (non)nuclear unital $C^{*}$-algebras $\c{A}$ with $n$-dimensional trace space such that $\r{ F}(\c{A})=\set{I_{n}}$.  
\end{thm}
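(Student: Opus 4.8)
The plan is to exhibit two uncountable families---one nuclear, one nonnuclear---each consisting of pairwise nonisomorphic algebras of the required type, and then to note that the two families are mutually nonisomorphic for the trivial reason of nuclearity, so their union is again uncountable and pairwise nonisomorphic. The nuclear family is supplied directly by Example \ref{ex:zid}; the nonnuclear family is obtained by tensoring those $AF$-algebras with a fixed simple nonnuclear block.

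First I would manufacture an uncountable supply of admissible parameter tuples. Fix non-quadratic reals $\theta_{2},\dots,\theta_{n}$ lying in pairwise distinct orbits under $\theta\mapsto\pm\theta+k$ ($k\in\m{Z}$) and satisfying the mutual independence hypotheses of Example \ref{ex:zid}. For the first coordinate, the conditions to be avoided---$\theta_{1}$ quadratic, or $\theta_{1}=\frac{a+b\theta_{j}}{c+d\theta_{j}}$, or $\theta_{j}=\frac{a+b\theta_{1}}{c+d\theta_{1}}$ for some $j\geq 2$ and integers $a,b,c,d$---exclude only countably many values, so uncountably many admissible $\theta_{1}$ remain, and since each orbit under $\theta_{1}\mapsto\pm\theta_{1}+k$ is countable, these fall into uncountably many orbits. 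Letting $\theta_{1}$ run through one representative per orbit produces an uncountable family of tuples whose orbit-multisets $\{[\theta_{1}],\dots,[\theta_{n}]\}$ are pairwise distinct. Example \ref{ex:zid} then gives simple unital $AF$-algebras $\c{A}_{\set{\theta_{i}}}$ with $n$-dimensional trace space and $\r{F}(\c{A}_{\set{\theta_{i}}})=\set{I_{n}}$. To see that distinct tuples give nonisomorphic algebras, I would use that a $*$-isomorphism carries $\partial_{e}\r{T}(\cdot)^{+}_{1}$ bijectively onto $\partial_{e}\r{T}(\cdot)^{+}_{1}$ \emph{without} rescaling and fixes the class of the unit, hence intertwines, up to a permutation $\pi$, the groups $\varphi_{i}^{*}(K_{0})=\m{Z}+\theta_{i}\m{Z}$; equality of such dense subgroups of $\m{R}$ forces $\eta_{\pi(i)}=\pm\theta_{i}+k$, so the orbit-multiset is an isomorphism invariant.

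For the nonnuclear family I would fix a simple, separable, unital, exact, nonnuclear $C^{*}$-algebra $\c{B}$ with a unique normalized trace $\tau$, with $K_{0}(\c{B})=\m{Z}[1_{\c{B}}]$ and $\tau^{*}(K_{0}(\c{B}))=\m{Z}$; the reduced free group algebra $C^{*}_{r}(\m{F}_{2})$ is such a $\c{B}$. Set $\c{C}_{\set{\theta_{i}}}=\c{A}_{\set{\theta_{i}}}\otimes_{\r{min}}\c{B}$. As $\c{A}_{\set{\theta_{i}}}$ is nuclear and $\c{B}$ is simple and exact, $\c{C}_{\set{\theta_{i}}}$ is simple, and it is nonnuclear because $\c{B}$ is. By Proposition \ref{pro:tensor} and uniqueness of $\tau$, its extreme normalized traces are exactly $\set{\varphi_{i}\otimes\tau}_{i=1}^{n}$, so the trace space stays $n$-dimensional. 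Since $\c{A}_{\set{\theta_{i}}}$ is $AF$ we have $K_{1}=0$, so the Künneth theorem gives $K_{0}(\c{C}_{\set{\theta_{i}}})\cong K_{0}(\c{A}_{\set{\theta_{i}}})$, and because $\tau^{*}(K_{0}(\c{B}))=\m{Z}$ the range $(\varphi_{i}\otimes\tau)^{*}(K_{0})$ equals $\varphi_{i}^{*}(K_{0}(\c{A}_{\set{\theta_{i}}}))=\m{Z}+\theta_{i}\m{Z}$. Thus the $K$-theoretic input to Proposition \ref{pro:Krelation} is unchanged, and the non-quadraticity and independence argument of Example \ref{ex:zid} applies verbatim to force $\r{F}(\c{C}_{\set{\theta_{i}}})=\set{I_{n}}$. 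The same trace/$K_{0}$ invariant distinguishes the $\c{C}_{\set{\theta_{i}}}$ for distinct orbit-multisets, so this family too is uncountable and pairwise nonisomorphic.

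The routine parts are the simplicity, nonnuclearity, and Künneth computations for the tensor product. The genuine obstacle is organizing the number theory so that uncountably many tuples satisfy \emph{all} the independence relations of Example \ref{ex:zid} simultaneously while still realizing pairwise distinct orbit-multisets; the one-parameter sweep above is the cleanest way I see to secure both at once. Verifying that the orbit-multiset is a true isomorphism invariant---via the no-rescaling property of trace-space isomorphisms combined with the rigidity of $\m{Z}+\theta\m{Z}$ under positive scaling for non-quadratic $\theta$---is the step that does the real work.
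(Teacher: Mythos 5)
Your proposal is correct and follows essentially the same route as the paper: take the algebras $\c{A}_{\set{\theta_{1},\dots,\theta_{n}}}$ of Example \ref{ex:zid}, show that an isomorphism forces the parameter sets to agree up to the (countable) orbits $\pm\theta+\m{Z}$ so that uncountably many orbit choices yield pairwise nonisomorphic algebras, and tensor with $C^{*}_{r}(\m{F}_{2})$ to get the nonnuclear family. The only difference is completeness: the paper treats just $n=2$ and asserts the orbit-rigidity and tensor steps without justification, whereas you supply the trace/$K_{0}$ intertwining argument, the orbit counting, and the simplicity/trace-space/K\"unneth verifications for the tensor product.
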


\begin{proof}
We show the case $n=2$.  
If $\c{A}_{\set{\theta_{1},\theta_{2}}}$ is isomorphic to $\c{A}_{\set{\theta^{\prime}_{1},\theta^{\prime}_{2}}}$, then there exist $m_{1},m_{2}\in\m{Z}$ such that $\set{\theta^{\prime}_{1},\theta^{\prime}_{2}}=\set{\theta_{1}+m_{1},\theta_{2}+m_{2}}$.  
Since there exist uncountably many sets $\set{\theta_{1},\theta_{2}}$ each pair of which do not have that relation, there exist uncountably many mutually nonisomorphic simple unital $AF$-algebras.  
In the case of nonnuclear, all one have to do is to consider $\c{A}_{\set{\theta_{1},\theta_{2}}}\otimes C^{*}_{r}(\m{F}_{2})$.    
\end{proof}

\begin{pro}\label{pro:det}
Let $G$ be a countable subgroup of $\m{R}^{\times }_{+}$ and let $n$ be a natural number.  
Then there exist uncountably many mutually nonisomorphic separable simple nonnuclear unital $C^{*}$-algebras $\c{A}$ with $n$-dimensional trace space such that $\r{ F}_{\r{det}}(\c{A})\supset G$.  
\end{pro}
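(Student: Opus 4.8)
The plan is to reduce everything to the one-dimensional (unique trace) case treated by Nawata and Watatani and then to tensor with the uncountable family of Example~\ref{ex:zid}. Fix a countable generating set $\set{g_{j}}_{j}$ of $G$ and let $H=\langle\, g_{j}^{1/n}:j\,\rangle$ be the countable subgroup of $\m{R}^{\times}_{+}$ generated by the positive $n$-th roots of these generators. Then the subgroup $\set{h^{n}:h\in H}$ of $n$-th powers contains every $g_{j}$, hence contains $G$. Passing to $H$ is necessary because, as the computation below shows, tensoring a unique-trace algebra (whose fundamental group is a subgroup of $GL_{1}(\m{R})$) into an $n$-dimensional trace space raises determinants to the $n$-th power.

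First I would produce a single nonnuclear building block $\c{D}$. By the realization results of Nawata and Watatani \cite{NY}, there is a simple separable unital \emph{nuclear} $C^{*}$-algebra $\c{D}_{0}$ with unique trace and $\r{F}_{\r{det}}(\c{D}_{0})=H$ (only the inclusion $\supset H$ will actually be used). Put $\c{D}=\c{D}_{0}\otimes_{\r{min}}C^{*}_{r}(\m{F}_{2})$. Since $\c{D}_{0}$ is nuclear and $C^{*}_{r}(\m{F}_{2})$ is simple with unique trace, $\c{D}$ is simple, separable, unital, has a unique trace, and is nonnuclear. Because $K_{0}(C^{*}_{r}(\m{F}_{2}))=\m{Z}$ with trace image $\m{Z}$, Proposition~\ref{pro:Krelation} forces $\r{F}(C^{*}_{r}(\m{F}_{2}))=\set{1}$, and the K\"unneth theorem gives $\tau_{\c{D}}^{*}(K_{0}(\c{D}))=\tau_{\c{D}_{0}}^{*}(K_{0}(\c{D}_{0}))=:R$; hence the $1$-dimensional case of Proposition~\ref{pro:tensor} together with the $K$-theoretic restriction of Proposition~\ref{pro:Krelation} yields $\r{F}_{\r{det}}(\c{D})=H$.

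Next, let $\set{\c{C}_{t}}_{t}$ be the uncountable family of simple unital $AF$-algebras with $n$-dimensional trace space and $\r{F}(\c{C}_{t})=\set{I_{n}}$ from Example~\ref{ex:zid}, with extremal traces $\set{\varphi_{i}^{(t)}}_{i=1}^{n}$ satisfying $(\varphi_{i}^{(t)})^{*}(K_{0}(\c{C}_{t}))=\m{Z}+\theta_{i}^{(t)}\m{Z}$. Set $\c{A}_{t}=\c{D}\otimes_{\r{min}}\c{C}_{t}$. Then $\c{A}_{t}$ is simple (tensor of a simple algebra with the nuclear $\c{C}_{t}$), separable, unital, and nonnuclear since $\c{D}$ is; by Proposition~\ref{pro:tensor} it has $n$-dimensional trace space. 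Applying the determinant inclusion of Proposition~\ref{pro:tensor}, and noting that an element $[h]\otimes I_{n}$ of $\r{F}(\c{D})\otimes\r{F}(\c{C}_{t})$ equals $hI_{n}$ with $|\r{det}(hI_{n})|=h^{n}$, we obtain $\r{F}_{\r{det}}(\c{A}_{t})\supset\set{h^{n}:h\in H}\supset G$.

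It remains to show that uncountably many of the $\c{A}_{t}$ are mutually nonisomorphic, and this is the \emph{main obstacle}. The idea is that the multiset of subgroups $\set{\varphi^{*}(K_{0}(\c{A}_{t})):\varphi\in\partial_{e}\r{T}(\c{A}_{t})^{+}_{1}}$ of $\m{R}$ is an isomorphism invariant: an isomorphism carries extremal traces to extremal traces and induces an isomorphism on $K_{0}$, so $(\psi\circ\alpha)^{*}(K_{0})=\psi^{*}(K_{0})$. By the K\"unneth factorization the extremal traces of $\c{A}_{t}$ are the $\tau_{\c{D}}\otimes\varphi_{i}^{(t)}$, and the associated subgroups are $R+\theta_{i}^{(t)}R$. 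The problem thus reduces to a number-theoretic one: to arrange, for uncountably many parameters $t$, that the multisets $\set{R+\theta_{i}^{(t)}R}_{i=1}^{n}$ are pairwise distinct. Since $R$ is a fixed countable additive subgroup of $\m{R}$, choosing the $\theta_{i}^{(t)}$ algebraically independent over the field $\m{Q}(R)$ makes $R+\theta R=R+\theta'R$ impossible unless $\theta,\theta'$ are affinely related over the fraction field of $R$, and a counting argument then produces uncountably many tuples with distinct invariants. Checking that multiplication by $R$ does not collapse the distinctions already present in Example~\ref{ex:zid} is the technical heart; the simplicity, separability, nonnuclearity, dimension and determinant claims all follow directly from the constructions above.
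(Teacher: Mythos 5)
The step you yourself flag as ``the technical heart'' is a genuine gap, and it is not a peripheral one: beyond producing a single algebra with $\r{F}_{\r{det}}(\c{A})\supset G$, the entire content of the proposition is the mutual nonisomorphism of uncountably many members of the family, and your proposal does not prove it. The difficulty is built into your choice of construction. Because you fix a single unique-trace block $\c{D}$ and vary only the AF factor $\c{C}_{t}$, every $\c{A}_{t}=\c{D}\otimes_{\r{min}}\c{C}_{t}$ carries the same visible determinant data, so the parameter $t$ cannot be read off from $\r{F}_{\r{det}}$, and you are forced to manufacture a finer invariant (the multiset of groups $\varphi^{*}(K_{0}(\c{A}_{t}))=R+\theta^{(t)}_{i}R$ over extremal traces) via the K\"unneth theorem together with the factorization of extremal traces of a minimal tensor product --- which, incidentally, is Proposition \ref{pro:tensor} of the paper, not a consequence of K\"unneth. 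Having set up this invariant, you reduce to a number-theoretic claim and then stop, asserting that algebraic independence over $\m{Q}(R)$ plus an unspecified counting argument should finish it. As written, the proposal proves only the inclusion $\r{F}_{\r{det}}(\c{A}_{t})\supset G$ for the whole family.

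The paper's proof avoids this issue entirely by turning the construction around: the varying parameter is placed in the unique-trace factor, where the determinant invariant can see it. For each $r\in\m{R}^{\times}_{+}\setminus G$ one takes (by the Nawata--Watatani realization theorem, which produces non-nuclear algebras) a separable simple unital $\c{B}_{r}$ with unique trace and $\r{F}(\c{B}_{r})=\{g^{1/n}:g\in G_{r}\}$, where $G_{r}$ is generated by $G$ and $r$, and sets $\c{A}_{r}=\c{B}_{r}\otimes\c{A}_{S_{n}}$ with the \emph{fixed} AF algebra $\c{A}_{S_{n}}$. Then $\r{F}_{\r{det}}(\c{A}_{r})\supset G_{r}\ni r$, and since $\r{F}_{\r{det}}(\c{A}_{r})$ is a countable group (by separability) and an isomorphism invariant, each isomorphism class can absorb only countably many values of $r$; as uncountably many $r$ are available, there are uncountably many classes --- no K-theory of tensor products, no number theory. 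If you wish to keep your route, your gap can in fact be closed by the same counting trick, and more cheaply than you suggest: since $1\in R=\tau^{*}_{\c{D}}(K_{0}(\c{D}))$, the equality $R+\theta R=R+\theta'R$ forces $\theta'\in R+\theta R$, a countable set; so, granting your invariance claim, each isomorphism class contains at most countably many tuples $(\theta^{(t)}_{1},\dots,\theta^{(t)}_{n})$, and uncountably many admissible tuples exist --- no algebraic independence over $\m{Q}(R)$ is needed. Separately, note that realizing a prescribed countable group as the \emph{exact} fundamental group of a nuclear unique-trace algebra is not available in \cite{NY}; since you only use $\r{F}(\c{D}_{0})\supset H$, you should instead take for $\c{D}_{0}$ the simple unital AF algebra whose dimension group is the subring of $\m{R}$ generated by $H$ with order unit $1$.
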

\begin{proof}
Let $r \not\in G$ be a real number of $\m{R}^{\times }_{+}$.  
We denote $G_{r}$ the subgroup of $\m{R}^{\times }_{+}$ generated by $r$ and $G$.   
Then there exist a separable non-nuclear unital $C^{*}$-algebra $\c{B}$ with unique trace 
such that $\r{ F}(\c{B}_{r})=\set{g^{\frac{1}{n}}:g\in G_{r}}$.  
Then $\c{B}_{r}\otimes \c{A}_{S_{n}}$ is a separable non-nuclear unital $C^{*}$-algebra with  $n$-dimensional trace space such that $\r{ F}_{\r{det}}(\c{B}\otimes \c{A}_{S_{n}})\supseteq G_{r}\supsetneq G$.  
Since $r$ is arbitrary and $\r{ F}_{\r{det}}(\c{B}\otimes \c{A}_{S_{n}})$ is countable,  
there exist uncountably many mutually nonisomorphic separable simple nonnuclear unital $C^{*}$-algebras.  
\end{proof}
\begin{cor}
For any natural number $n$, there exist uncountably many mutually nonisomorphic separable simple nonnuclear unital $C^{*}$-algebras $\c{A}$ with $n$-dimensional trace space each of which fundamental groups $\r{ F}(\c{A})$ are different.  
\end{cor}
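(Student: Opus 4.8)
The plan is to reduce the statement to a pigeonhole argument built on Proposition \ref{pro:det} together with two structural facts: that $\r{F}_{\r{det}}(\c{A})=|\r{det}|(\r{F}(\c{A}))$ is a function of $\r{F}(\c{A})$, and that it is an isomorphism invariant. First I would fix the trivial subgroup $G=\set{1}$ of $\m{R}^{\times}_{+}$ and set $R=\m{R}^{\times}_{+}\setminus\set{1}$, which is uncountable. For each $r\in R$ the construction in the proof of Proposition \ref{pro:det}, applied to this $G$ and this $r$, produces a separable simple nonnuclear unital $C^{*}$-algebra $\c{A}_{r}$ with $n$-dimensional trace space for which $r\in G_{r}\subseteq\r{F}_{\r{det}}(\c{A}_{r})$, where $G_{r}=\set{r^{k}:k\in\m{Z}}$. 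Since $\c{A}_{r}$ is separable, the countability corollary for separable algebras guarantees that $\r{F}_{\r{det}}(\c{A}_{r})$ is a countable subgroup of $\m{R}^{\times}_{+}$.

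Next I would run the counting step. Suppose, for contradiction, that the family $\set{\r{F}_{\r{det}}(\c{A}_{r}):r\in R}$ consisted of only countably many distinct groups $H_{1},H_{2},\dots$. Each $H_{i}$ is a countable subset of $\m{R}^{\times}_{+}$, so $\bigcup_{i}H_{i}$ is countable. But every $r\in R$ satisfies $r\in\r{F}_{\r{det}}(\c{A}_{r})\subseteq\bigcup_{i}H_{i}$, forcing $R\subseteq\bigcup_{i}H_{i}$ and contradicting the uncountability of $R$. Hence $\set{\r{F}_{\r{det}}(\c{A}_{r}):r\in R}$ is uncountable, and by choosing one index $r$ for each value attained I obtain an uncountable subfamily of algebras with pairwise distinct determinant fundamental groups.

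Finally I would upgrade this to the stated conclusion. By Definition \ref{def:fund} we have $\r{F}_{\r{det}}(\c{A})=|\r{det}|(\r{F}(\c{A}))$, so distinct values of $\r{F}_{\r{det}}$ force distinct values of $\r{F}$; thus the selected algebras have pairwise different fundamental groups. They are also mutually nonisomorphic: conjugation by a permutation unitary leaves every determinant unchanged, so $\r{F}_{\r{det}}$ is invariant under the relation of Definition \ref{def:isomorphism}, while by Proposition \ref{pro:Ciso} isomorphic algebras have fundamental groups agreeing up to such a permutation and therefore share the same $\r{F}_{\r{det}}$. Consequently algebras with pairwise distinct $\r{F}_{\r{det}}$ cannot be isomorphic. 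The separability, simplicity, nonnuclearity, and $n$-dimensionality of the trace space are all inherited directly from the building blocks furnished by Proposition \ref{pro:det}.

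I expect the only genuine subtlety to be that Proposition \ref{pro:det} supplies merely the containment $r\in\r{F}_{\r{det}}(\c{A}_{r})$ rather than an exact computation of the group. The counting argument is tailored precisely so that this one-sided lower bound, combined with the countability of each $\r{F}_{\r{det}}(\c{A}_{r})$, already suffices; no exact determination of the determinant fundamental groups is needed, which is what makes the argument go through cleanly.
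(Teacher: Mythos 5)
Your proposal is correct and takes essentially the same route as the paper: the pigeonhole step you spell out (each $\r{F}_{\r{det}}(\c{A}_{r})$ is countable by the corollary to \ref{pro:Krelation}, yet every $r$ lies in some such group, so uncountably many distinct groups must occur) is exactly the counting argument the paper uses inside the proof of Proposition \ref{pro:det}, of which this corollary is stated as an immediate consequence. Your concluding observations---that $\r{F}_{\r{det}}(\c{A})=|\r{det}|(\r{F}(\c{A}))$ turns distinct determinant fundamental groups into distinct fundamental groups, and that invariance of $|\r{det}|$ under permutation conjugation together with Proposition \ref{pro:Ciso} yields nonisomorphism---are precisely the implicit steps the paper relies on.
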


\section{Exact sequence of Picard group and fundamental group}\label{sec:exact}
 In this section, we shall show the diagram with respect to fundamental groups $\r{ F}(\c{A})$
and Picard groups of $C^*$-algebra $\c{A}$.  This construction generalizes the proposition 3.26 of \cite{NY}.  
\begin{Def}\label{def;tracepreserve}
Let ${\mathcal A}$ be a unital $C^*$-algebra.  
We denote by $\mathrm{Aut}_{\r{ T}({\mathcal A})}({\mathcal A})$ the set of automorphisms which are trace invariant.  
Then $\mathrm{Int}({\mathcal A})$ is a normal subgroup of $\mathrm{Aut}_{\r{ T}({\mathcal A})}({\mathcal A})$.  
We denote by $\mathrm{Out}_{\r{ T}({\mathcal A})}({\mathcal A})$ the quotient group $\mathrm{Aut}_{\r{ T}({\mathcal A})}({\mathcal A})/\mathrm{Int}({\mathcal A})$.
\end{Def}
Because $\mathrm{Aut}_{\r{ T}({\mathcal A})}({\mathcal A})$ is normal subgroup of $\mathrm{Aut}({\mathcal A})$, 
$\mathrm{Out}_{\r{ T}({\mathcal A})}({\mathcal A})$ is a normal sugroup of $\mathrm{Out}({\mathcal A})$.  
We denote by $\rho_{{\mathcal A}}|$ the restriction of $\rho_{{\mathcal A}}$ 
to $\mathrm{Aut}_{\r{ T}({\mathcal A})}({\mathcal A})$ 
and denote by $S_{\c{A}}$ the representation of $\r{F}^{tr}(\c{A})$ into $\c{L}(\r(T)(\c{A}))$.  
We say that $\r{ T}({\mathcal A})$ separates equivalence classes of projections 
if for any fixed natural number $n$ and for any fixed projecition $p, q\in M_{n}({\mathcal A})$, 
$\r{Tr}^{(n)}\otimes\varphi (p)=\r{Tr}^{(n)}\otimes\varphi(q)$ 
for all $\varphi$ in $\r{ T}({\mathcal A})$, 
then $p$ and $q$ are von Neumann equivalent.
\begin{thm}\label{thm:exacts}
Let $A$ be a unital $C^*$-algebra with finite dimensional bounded trace space.  
If $\r{ T}({\mathcal A})$ separates equivalence classes of projections, 
then we have the following commutative diagram whose horizontal lines are exact.  
\[\begin{CD}
{1} @>>> \mathrm{Out}_{\r{ T}({\mathcal A})}({\mathcal A}) @>\rho_{\mathcal A}|>> \mathrm{Pic}({\mathcal A}) @>S_{{\mathcal A}}R_{{\mathcal A}}>> \r{ F}({\mathcal A})
 @>>> {1}\\
@. @AidAA @A\rho_{\mathcal A}AA @Ai_{F}AA\\
{1} @>>> \mathrm{Out}_{\r{ T}({\mathcal A})}({\mathcal A}) @>i_{out}>> \mathrm{Out}({\mathcal A}) @>\rho_{\mathcal A}S_{{\mathcal A}}R_{{\mathcal A}}>> U(S_{n})\cap \r{ F}({\mathcal A})
 @>>> {1} \end{CD}.   \]  
In this diagram, $i_{out}$ and $i_{F}$ are the inclusion maps from $\mathrm{Out}_{\r{ T}({\mathcal A})}({\mathcal A})$ into $\mathrm{Out}({\mathcal A})$ and from $U(S_{n})\cap \r{ F}({\mathcal A})$ into $\r{ F}(\c{A})$ respectively.  
\end{thm}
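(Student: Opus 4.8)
The plan is to route the whole diagram through the single map $R_{\mathcal{A}}$ and its behaviour on the distinguished bimodules $\mathcal{E}_\alpha$. First I would record the basic computation: taking $\{1\}$ as a right basis of $\mathcal{E}_\alpha$, Proposition \ref{pro:def} gives $(R_{\mathcal{A}}([\mathcal{E}_\alpha])(\varphi))(a)=\varphi(\langle 1,a\cdot 1\rangle_{\mathcal{A}})=\varphi(\alpha^{-1}(a))$, so that $R_{\mathcal{A}}([\mathcal{E}_\alpha])(\varphi)=\varphi\circ\alpha^{-1}$. Two consequences are immediate. Since an automorphism carries $\r{T}(\mathcal{A})^{+}_{1}$ onto itself and permutes its extreme points (as in Proposition \ref{pro:Ciso}), the matrix $S_{\mathcal{A}}R_{\mathcal{A}}\rho_{\mathcal{A}}([\alpha])$ is always a permutation matrix, which is why the bottom horizontal map lands in $U(S_n)\cap\r{F}(\mathcal{A})$; and $R_{\mathcal{A}}([\mathcal{E}_\alpha])=\mathrm{id}$ holds exactly when $\varphi\circ\alpha^{-1}=\varphi$ for every trace, i.e.\ exactly when $\alpha\in\mathrm{Aut}_{\r{T}(\mathcal{A})}(\mathcal{A})$.

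The crux is a reduction lemma: if $[\mathcal{E}]\in\mathrm{Pic}(\mathcal{A})$ satisfies $S_{\mathcal{A}}R_{\mathcal{A}}([\mathcal{E}])\in U(S_n)$ (in particular if it equals $I_n$), then $[\mathcal{E}]$ lies in the image of $\rho_{\mathcal{A}}$. I would write $\mathcal{E}\cong p\mathcal{A}^k$ with an isomorphism $\alpha:\mathcal{A}\to pM_k(\mathcal{A})p$, so that by Proposition \ref{pro:extfundamental group} one has $R_{\mathcal{A}}([\mathcal{E}])(\varphi)=(\r{Tr}_k\otimes\varphi)\circ\alpha$. Evaluating at the unit and using that the representing matrix is a permutation gives $(\r{Tr}_k\otimes\varphi)(p)=(\r{Tr}_k\otimes\varphi)(e_{11})$ for every $\varphi\in\r{T}(\mathcal{A})$, where $e_{11}=\mathrm{diag}(1,0,\dots,0)$. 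This is precisely the point at which the hypothesis that $\r{T}(\mathcal{A})$ separates equivalence classes of projections is invoked: it yields a partial isometry $v\in M_k(\mathcal{A})$ with $v^{*}v=e_{11}$ and $vv^{*}=p$. Conjugating $\alpha$ by $v$ produces an honest automorphism $\beta=\mathrm{Ad}(v^{*})\circ\alpha$ of $\mathcal{A}$ (after the identification $e_{11}M_k(\mathcal{A})e_{11}\cong\mathcal{A}$), and transporting the module structure along $\xi\mapsto v\xi$ identifies $p\mathcal{A}^k$ with $\mathcal{E}_\beta$, whence $[\mathcal{E}]=\rho_{\mathcal{A}}([\beta])$. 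I expect this reduction to be the main obstacle, since it is the only step that is not formal: the verification that the transported structure is genuinely $\mathcal{E}_\beta$, and that $\beta$ really maps into the correct corner, requires careful bookkeeping with the left action and the relations $pv=v$, $v^{*}p=v^{*}$.

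With these two ingredients the diagram assembles formally. For the top row, surjectivity of $S_{\mathcal{A}}R_{\mathcal{A}}$ is Proposition \ref{pro:extfundamental group} together with the definition of $\r{F}(\mathcal{A})$ as $S_{\mathcal{A}}(\r{F}^{tr}(\mathcal{A}))$, and injectivity of $\rho_{\mathcal{A}}|$ is inherited from that of $\rho_{\mathcal{A}}$; exactness in the middle reads $\ker(S_{\mathcal{A}}R_{\mathcal{A}})=\rho_{\mathcal{A}}(\mathrm{Out}_{\r{T}(\mathcal{A})}(\mathcal{A}))$, where $\supseteq$ is the computation of the first paragraph and $\subseteq$ is the reduction lemma followed by the observation that $R_{\mathcal{A}}([\mathcal{E}_\beta])=\mathrm{id}$ forces $\beta$ trace-invariant. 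The bottom row is handled identically: its middle map is the composite $S_{\mathcal{A}}R_{\mathcal{A}}\rho_{\mathcal{A}}$, whose kernel is $\{[\alpha]:\alpha\in\mathrm{Aut}_{\r{T}(\mathcal{A})}(\mathcal{A})\}=\mathrm{Out}_{\r{T}(\mathcal{A})}(\mathcal{A})$ by the first paragraph, and whose surjectivity onto $U(S_n)\cap\r{F}(\mathcal{A})$ is the reduction lemma applied to a preimage of a given permutation. Finally both squares commute by inspection: the left one because $\rho_{\mathcal{A}}|$ is literally the restriction of $\rho_{\mathcal{A}}$, and the right one because $i_F$ is an inclusion, so $i_F\circ(S_{\mathcal{A}}R_{\mathcal{A}}\rho_{\mathcal{A}})=(S_{\mathcal{A}}R_{\mathcal{A}})\circ\rho_{\mathcal{A}}$ as maps into $\r{F}(\mathcal{A})$.
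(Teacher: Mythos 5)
Your proposal is correct and follows essentially the same route as the paper: both hinge on representing a class in $\mathrm{Ker}(S_{\mathcal A}R_{\mathcal A})$ (or a preimage of a permutation) as $p\mathcal{A}^{k}$, evaluating the trace identity at the unit to get $(\r{Tr}_{k}\otimes\varphi)(p)=(\r{Tr}_{k}\otimes\varphi)(e_{11})$ for all $\varphi$, invoking the separation hypothesis to produce the partial isometry, and conjugating to obtain an automorphism $\beta$ with $[\mathcal{E}]=[\mathcal{E}_{\beta}]$, after which exactness of both rows is formal. Your version is in fact slightly more explicit than the paper's (the formula $R_{\mathcal A}([\mathcal{E}_{\alpha}])(\varphi)=\varphi\circ\alpha^{-1}$ and the reason the bottom map lands in $U(S_{n})\cap\r{F}(\mathcal{A})$ are only implicit there), but the substance is the same.
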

\begin{proof}
It is sufficient to show horizontal lines are exact.  
We show the first line is exact.  
The map $\rho_{\mathcal A}$ is one-to-one and  $\mathrm{Im}\rho_{\mathcal A}\subset \mathrm{Ker}(S_{{\mathcal A}}R_{{\mathcal A}})$ by the chapter 2 and  
$S_{A}R_{A}$ is onto by definition.  
We shall show that $\mathrm{Ker}(S_{A}R_{A})\subset \mathrm{Im}\rho_{\mathcal A}$.   
Let $[{\mathcal E}]$ be in $\mathrm{Ker}(S_{{\mathcal A}}R_{{\mathcal A}})$, 
let $\{\xi_{i}\}_{i=1}^{k}$ be the basis of ${\mathcal E}$, 
let $p=(\langle \xi_{i}, \xi_{j} \rangle_{\mathcal A})_{ij}$ in $M_{k}({\mathcal A})$, 
and let $\Phi$ be the isomorphism from ${\mathcal A}$ to $pM_{n}({\mathcal A})p$.  
Because $S_{{\mathcal A}}R_{{\mathcal A}}([{\mathcal E}])=id_{\r{ T}({\mathcal A})}$, 
$\sum_{i=1}^{k}\varphi_{j}(\langle \xi_{i}, 
a\xi_{i} \rangle_{{\mathcal A}})=\varphi_{j}(a)$ 
for all $\varphi_{j}$ in $\partial_{e}\r{T}(\c{A})^{+}_{1}$ 
and for all $a$ in $A$.  
Substituting $a=1_{{\mathcal A}}$ into the previous formula, 
we can obtain the formula 
that $\r{Tr}^{(k)}\otimes\varphi(p)=\r{Tr}^{(k)}\otimes\varphi(1_{\c{A}}\otimes e_{11})=1$.  
By assumption, there exists a partial isometry $w$ 
such that $p=w^*w$ and $1\otimes e_{11}=ww^*$ in $M_{k}({\mathcal A})$.  
Then there exists an automorphism $\alpha$ of ${\mathcal A}$ 
such that $w\Phi(a)w^*=\alpha(a)\otimes e_{11}$.  
Hence $[{\mathcal E}]=[{\mathcal E}_{\alpha}]$.  
Since $[{\mathcal E}]$ is in $\mathrm{Ker}(S_{{\mathcal A}}R_{{\mathcal A}})$, 
$\alpha$ is an element of $\mathrm{Out}_{\r{ T}({\mathcal A})}({\mathcal A})$.  
We next show that $\rho_{\mathcal A}S_{{\mathcal A}}R_{{\mathcal A}}$ is onto.  
Since $S_{{\mathcal A}}R_{{\mathcal A}}$ is onto, 
for all $U(\sigma)\in U(S_{n})\cap \r{ F}({\mathcal A})$ 
there exists a imprimitivity bimodule ${\mathcal E}$ 
such that $S_{\mathcal A}R_{\mathcal A}([{\mathcal E}])=U(\sigma)$.  
Let $\{\xi_{i}\}_{i=1}^{l}$ be a basis of ${\mathcal E}$.  
Then $\sum_{i=1}^{l}\varphi_{j}(\langle \xi_{i}, 
a\xi_{i} \rangle_{{\mathcal A}})=\varphi_{\sigma(j)}(a)$.  
As in the same proof in \ref{thm:exacts}, 
there exists an automorphism $\alpha$ such that $[{\mathcal E}]=[{\mathcal E}_{\alpha}]$.
\end{proof}

By using the above diagram, we have the following corollary.    

\begin{cor}
Let $A$ be a unital $C^*$-algebra with finite dimensional bounded trace space.
Put $\r{ dim}\r{ T}({\mathcal A})=n$.    
Under the same assumption in \ref{thm:exacts}, 
if $\r{ F}(\c{A})\subset U(S_{n})$, then $\mathrm{Out}(\c{A})$ is isomorphic to $\r{ Pic}(\c{A})$.  
\end{cor}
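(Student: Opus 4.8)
The plan is to read the conclusion off the commutative diagram of Theorem \ref{thm:exacts} by means of the short five lemma, after noting that the hypothesis collapses the rightmost column. First I would observe that the assumption $\r{ F}(\c{A})\subset U(S_{n})$ forces $U(S_{n})\cap \r{ F}(\c{A})=\r{ F}(\c{A})$, so that the right-hand vertical inclusion $i_{F}\colon U(S_{n})\cap \r{ F}(\c{A})\to \r{ F}(\c{A})$ is the identity map, hence an isomorphism. The left-hand vertical map is the identity on $\mathrm{Out}_{\r{ T}({\mathcal A})}({\mathcal A})$, so it too is an isomorphism.

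Since both horizontal rows of the diagram in Theorem \ref{thm:exacts} are exact and the diagram commutes, the short five lemma then applies to the middle vertical map $\rho_{\mathcal A}\colon \mathrm{Out}({\mathcal A})\to \mathrm{Pic}({\mathcal A})$ and shows that it is an isomorphism. This yields $\mathrm{Out}({\mathcal A})\cong \mathrm{Pic}({\mathcal A})$, as desired.

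If I wished to avoid quoting the five lemma, the same conclusion follows from a direct diagram chase, and this is where what little content there is resides. Recall from Section \ref{sec:picard group} that $\rho_{\mathcal A}$ is already injective, so only surjectivity is at issue. Given $[{\mathcal E}]\in \mathrm{Pic}({\mathcal A})$, its image $S_{\mathcal A}R_{\mathcal A}([{\mathcal E}])$ lies in $\r{ F}({\mathcal A})=U(S_{n})\cap \r{ F}({\mathcal A})$; using surjectivity of the bottom row I would produce $\alpha\in \mathrm{Out}({\mathcal A})$ with $S_{\mathcal A}R_{\mathcal A}(\rho_{\mathcal A}(\alpha))=S_{\mathcal A}R_{\mathcal A}([{\mathcal E}])$. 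Then $[{\mathcal E}]\rho_{\mathcal A}(\alpha)^{-1}$ lies in $\mathrm{Ker}(S_{\mathcal A}R_{\mathcal A})=\rho_{\mathcal A}(\mathrm{Out}_{\r{ T}({\mathcal A})}({\mathcal A}))$ by exactness of the top row, and since $\mathrm{Out}_{\r{ T}({\mathcal A})}({\mathcal A})\subset \mathrm{Out}({\mathcal A})$ this exhibits $[{\mathcal E}]$ as an element of $\mathrm{Im}\,\rho_{\mathcal A}$. The only step requiring care is the bookkeeping in combining the two liftings; there is no genuine obstacle, since the hypothesis has already turned the right-hand term of the diagram into $\r{ F}(\c{A})$ itself, which is exactly what makes the five lemma directly applicable.
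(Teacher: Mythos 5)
Your proposal is correct and is essentially the paper's own argument: the paper offers no explicit proof beyond the remark that the corollary follows ``by using the above diagram,'' and your reduction (the hypothesis $\r{F}(\c{A})\subset U(S_{n})$ makes $i_{F}$ the identity, so the short five lemma for groups, or equivalently your explicit chase using injectivity of $\rho_{\c{A}}$ and exactness of both rows, gives $\mathrm{Out}(\c{A})\cong\mathrm{Pic}(\c{A})$) is precisely the intended reading of that diagram.
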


We will show the relation between the scaling group and the fundamental group.  

\begin{Def}\label{def:db}
Let ${\mathcal A}$ be a unital $C^*$-algebra with finite dimensional bounded trace space
and let $\set{\varphi_{i}}^{n}_{i=1}$ be a basis of ${\rm T}(\c{A})$.  
A dual system of $\set{\varphi_{i}}^{n}_{i=1}$ in $\c{A}$ is the subset $\set{u_{i}}^{n}_{i=1}$ of $\c{A}$ satisfying $\varphi_{i}(u_{j})=\delta _{ij}$, 
where $\delta_{ij}$ is Kronecker's delta.  
\end{Def}
\begin{lem}\label{lem:dbexist}
Let ${\mathcal A}$ be a unital $C^*$-algebra with finite dimensional bounded trace space.  
There exists a dual system $\set{u_{i}}^{n}_{i=1}$ for any basis $\set{\varphi_{i}}^{n}_{i=1}$ of ${\rm T}(\c{A})$.  
\end{lem}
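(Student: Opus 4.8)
The plan is to reduce the statement to the standard existence of a dual basis for a finite family of linearly independent functionals. Since $\r{ dim}\,\r{ T}(\c{A})=n$ and $\set{\varphi_{i}}^{n}_{i=1}$ is a basis, the $\varphi_{i}$ are linearly independent bounded linear functionals on the Banach space $\c{A}$. I would consider the bounded linear map $\Phi\colon \c{A}\rightarrow \m{C}^{n}$ defined by $\Phi(a)=(\varphi_{1}(a),\ldots,\varphi_{n}(a))$, and observe that the whole lemma amounts to showing $\Phi$ is surjective: if $\Phi$ is onto, then choosing for each $j$ an element $u_{j}\in\c{A}$ with $\Phi(u_{j})=e_{j}$ (the $j$-th standard basis vector of $\m{C}^{n}$) gives exactly $\varphi_{i}(u_{j})=\delta_{ij}$, so that $\set{u_{j}}^{n}_{j=1}$ is the desired dual system.

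First I would note that $\Phi$ is well-defined and linear, which is immediate since each $\varphi_{i}$ is a bounded trace, hence a bounded linear functional. The key step is then surjectivity, which I would establish by contradiction: if $\r{Im}\,\Phi$ were a proper subspace of $\m{C}^{n}$, there would exist a nonzero vector $(\lambda_{1},\ldots,\lambda_{n})\in\m{C}^{n}$ annihilating it, that is, $\sum_{i=1}^{n}\lambda_{i}\varphi_{i}(a)=0$ for every $a\in\c{A}$. But this asserts $\sum_{i=1}^{n}\lambda_{i}\varphi_{i}=0$ as an element of $\c{A}^{*}$, contradicting the linear independence of $\set{\varphi_{i}}^{n}_{i=1}$. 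Hence $\r{Im}\,\Phi=\m{C}^{n}$, and the preimages $u_{j}$ exist.

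The main (and really only) obstacle is this surjectivity, and it is resolved entirely by the hypothesis that the $\varphi_{i}$ form a basis, so that no nontrivial linear combination vanishes identically on $\c{A}$. I emphasize that no positivity or self-adjointness of the $u_{i}$ is demanded by Definition \ref{def:db}, so the preimages produced above---which automatically lie in $\c{A}$, since $\Phi$ has domain $\c{A}$---complete the argument without any further adjustment. Should self-adjoint representatives ever be wanted, one could additionally use that the extremal positive traces are real on self-adjoint elements and pass to $(u_{j}+u_{j}^{*})/2$, but this refinement is not needed for the statement as given.
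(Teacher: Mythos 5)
Your proposal is correct and takes essentially the same approach as the paper: both define the evaluation map $\Phi(a)=(\varphi_{1}(a),\ldots,\varphi_{n}(a))$ into $\m{C}^{n}$ and reduce the lemma to surjectivity of $\Phi$. The only difference is in the final linear-algebra step — you rule out a proper image via a nonzero annihilator, which would force a vanishing nontrivial combination $\sum_{i}\lambda_{i}\varphi_{i}=0$ contradicting linear independence, whereas the paper counts dimensions by injecting $\r{T}(\c{A})$ into $(\c{A}/\mathrm{Ker}(\Phi))^{*}$; these are two standard phrasings of the same finite-dimensional duality fact.
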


\begin{proof}
We define a linear map $\Phi:\c{A}\rightarrow \m{C}^{n}$ 
by $\Phi(a)=(\varphi_{1}(a),\varphi_{2}(a),\cdot \cdot \cdot ,\varphi_{n}(a))$.  
It is sufficient to show ${\rm dim}\mathrm{Im}(\Phi)=n$.  
Obviously,  ${\rm dim}\mathrm{Im}(\Phi)\leq n$.  
We will show ${\rm dim}\mathrm{Im}(\Phi)={\rm dim}(\c{A}/\mathrm{Ker}(\Phi))\geq n$.  
The linear map $\iota :{\rm T}(\c{A})\rightarrow (\c{A}/\mathrm{Ker}(\Phi))^{*}$ 
given by $\iota (\varphi)([a])=\varphi(a)$ is welldefined and injective, 
where $[a]$ is an equivalent class of $a$ in $\c{A}$.   
Therefore ${\rm dim}(\c{A}/\mathrm{Ker}(\Phi))\geq n$.  
\end{proof}

\begin{rem}
Let $\c{A}$ be a simple $C^{*}$-algebra 
and let $\tau$ be a non-zero bounded trace on $\c{A}$.  
Since $\tau$ is a trace, $I=\set{a\in\c{A}:\tau(a^{*}a)=0}$ is an closed two-sided ideal.  
Therefore $I=\set{0}$ because $\c{A}$ is simple.  
We suppose ${\rm dim}{\rm T}(\c{A})\geq 2$.  
Then no elements of dual system of ${\rm T}(\c{A})$ is positive 
because $\tau(a)>0$ for any $\tau$ in ${\rm T}(\c{A})^{+}$ and for any positive element $a$ in $\c{A}$.  
\end{rem}  

By this dual basis, we can see 
\[\r{F}(\c{A})=\set{((tr_{k}\otimes \varphi_{i})\circ \Phi (u_{j}))_{ij}\mid (p,\Phi ): {\rm s.s.p},\ p\in M_{k}(\c{A})}\].   

Let $\c{A}$ be a unital $C^{*}$-algebra with finite dimensional tracial space and with no unbounded trace.  
We suppose $\sharp(\partial_{e}(\r{T}(\c{A}))=n$.  
Say $\set{\varphi_{i}}^{n}_{i=1}=\partial_{e}(\r{T}(\c{A}))$.  
Let $\set{u_{j}}^{n}_{j=1}$ be a dual basis of $\set{\varphi_{i}}^{n}_{i=1}$.  
If $\alpha\in \r{Aut}(\c{A}\otimes\m{K})$, then $(\tau\otimes \r{Tr})\circ \alpha$ is a densely defined, lower semicontinuous trace 
for any $\tau\in \r{T}(\c{A})$.  
Since $\c{A}$ has no unbounded trace,  we can write $(\tau\otimes \r{Tr})\circ \alpha$ as a linear combination of $\set{\varphi_{i}\otimes \r{Tr}}^{n}_{i=1}$.  
Put
\[\c{S}(\c{A})=\set{((\varphi_{i}\otimes \r{Tr})\circ \alpha(u_{j}\otimes e_{11}))_{ij} \mid \alpha \in \r{Aut}(\c{A}\otimes\m{K}) }.\]

Even if $\c{A}\otimes \m{K}$ is isomorphic to $\c{B}\otimes \m{K}$, $S(\c{A})$ may not be $S(\c{B})$ because $S(\c{A})$ depends on $\partial_{e}(\r{T}(\c{A}))$ (See \ref{ex:scale}).     
We shall show that this set is equal to the fundamental group of $\c{A}$.  

\begin{pro}
Let $\c{A}$ be a unital $C^{*}$-algebra with finite dimensional tracial space and with no unbounded trace.   
Then $\c{S}(\c{A})=\r{F}(\c{A})$.  
\end{pro}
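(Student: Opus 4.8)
The plan is to prove that $\c{S}(\c{A})$ and $\r{F}(\c{A})$ are the matrices of one and the same subgroup of $\c{GL}(\r{T}(\c{A}))$. First I reduce $\c{S}(\c{A})$ to an action on the trace space. For $\alpha\in\r{Aut}(\c{A}\otimes\m{K})$ and $\varphi\in\r{T}(\c{A})$, the functional $(\varphi\otimes\r{Tr})\circ\alpha$ is a densely defined lower semicontinuous trace on $\c{A}\otimes\m{K}$; because $\c{A}$ has \emph{no} unbounded trace, it is of the form $S_{\alpha}(\varphi)\otimes\r{Tr}$ for a unique bounded trace $S_{\alpha}(\varphi)\in\r{T}(\c{A})$. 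This is exactly where the hypothesis is used, and it is what makes $S_{\alpha}$ a genuine operator on the finite dimensional space $\r{T}(\c{A})$. The assignment $\alpha\mapsto S_{\alpha}$ satisfies $S_{\alpha\beta}=S_{\beta}\circ S_{\alpha}$, so each $S_{\alpha}$ lies in $\c{GL}(\r{T}(\c{A}))$ (with inverse $S_{\alpha^{-1}}$) and the image is a subgroup. Evaluating against a dual system $\set{u_{j}}$ and using $\varphi_{l}(u_{j})=\delta_{lj}$, the number $(\varphi_{i}\otimes\r{Tr})(\alpha(u_{j}\otimes e_{11}))=S_{\alpha}(\varphi_{i})(u_{j})$ is precisely the $(i,j)$ matrix element of $S_{\alpha}$ in the basis $\set{\varphi_{i}}$. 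Hence $\c{S}(\c{A})$ is the set of matrices of $\set{S_{\alpha}:\alpha\in\r{Aut}(\c{A}\otimes\m{K})}$, and it suffices to prove $\set{S_{\alpha}}=\r{F}^{tr}(\c{A})=R_{\c{A}}(\r{Pic}(\c{A}))$, the last equality being \ref{pro:extfundamental group}.

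For the inclusion $R_{\c{A}}(\r{Pic}(\c{A}))\subseteq\set{S_{\alpha}}$, fix a self-similar pair $(p,\Phi)$ with $p\in M_{k}(\c{A})$ and $\Phi:\c{A}\to pM_{k}(\c{A})p$. Embed $M_{k}(\c{A})$ as the corner $(1\otimes q_{k})(\c{A}\otimes\m{K})(1\otimes q_{k})$, $q_{k}=\sum_{l\le k}e_{ll}$, via a $*$-isomorphism $\iota$ under which the global trace $\varphi\otimes\r{Tr}$ restricts to $\r{Tr}_{k}\otimes\varphi$. Then $P:=\iota(p)$ is a full projection of $B:=\c{A}\otimes\m{K}$, and $e:=1\otimes e_{11}$ is a full projection with $eBe\cong\c{A}$. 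The map $a\otimes e_{11}\mapsto\iota(\Phi(a))$ is an isomorphism of the full corner $eBe$ onto the full corner $PBP$. Here I invoke the stable-isomorphism machinery of Brown--Green--Rieffel and L.\,G.~Brown: over the $\sigma$-unital stable algebra $B$ an isomorphism between two full corners is implemented by an automorphism, yielding $\alpha\in\r{Aut}(B)$ with $\alpha(a\otimes e_{11})=\iota(\Phi(a))$ for all $a\in\c{A}$. A densely defined lower semicontinuous trace on $B$ is determined by its restriction to the full corner $eBe$; since $(\varphi\otimes\r{Tr})\circ\alpha$ and $\bigl((\r{Tr}_{k}\otimes\varphi)\circ\Phi\bigr)\otimes\r{Tr}$ both send $a\otimes e_{11}$ to $(\r{Tr}_{k}\otimes\varphi)(\Phi(a))$, they coincide. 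Thus $S_{\alpha}(\varphi)=(\r{Tr}_{k}\otimes\varphi)\circ\Phi=R_{\c{A}}([p\c{A}^{k}])(\varphi)$, so $\r{F}(\c{A})\subseteq\c{S}(\c{A})$.

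Conversely, take $\alpha\in\r{Aut}(B)$. The projection $\alpha(e)$ is full in $B=\c{A}\otimes\m{K}=\varinjlim M_{k}(\c{A})$, hence Murray--von Neumann equivalent in $B$ to $\iota(p)$ for some projection $p\in M_{k}(\c{A})$; being equivalent to a full projection, $p$ is full, and $pM_{k}(\c{A})p\cong\alpha(e)B\alpha(e)=\alpha(eBe)\cong\c{A}$, producing a self-similar pair $(p,\Phi)$. Choosing a partial isometry $v\in B$ with $v^{*}v=\alpha(e)$ and $vv^{*}=\iota(p)$, the automorphism $\r{Ad}(v)\circ\alpha$ carries $eBe$ onto $PBP$ in the manner of the previous paragraph, and since $\r{Ad}(v)$ preserves every densely defined trace the same restriction argument gives $S_{\alpha}=R_{\c{A}}([p\c{A}^{k}])\in R_{\c{A}}(\r{Pic}(\c{A}))$. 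Hence $\c{S}(\c{A})\subseteq\r{F}(\c{A})$, and together with the previous inclusion $\c{S}(\c{A})=\r{F}(\c{A})$.

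The only non-formal ingredients are the extension result used above --- that over a $\sigma$-unital stable $C^{*}$-algebra an isomorphism of full corners is implemented by an automorphism --- and the uniqueness of the extension of a trace from a full corner. Both are standard consequences of Brown--Green--Rieffel theory, and the main care in writing the proof will be to cite them in exactly the form needed and to keep the trace normalizations consistent (the corner embedding $\iota$ must send $\varphi\otimes\r{Tr}$ to $\r{Tr}_{k}\otimes\varphi$ with no spurious scalar), so that the computed operator is literally $R_{\c{A}}([p\c{A}^{k}])$ rather than a rescaling of it.
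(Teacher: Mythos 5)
Your proof is correct, and at the level of architecture it is the paper's proof: both inclusions run through the correspondence between self-similar pairs $(p,\Phi)$ and automorphisms of $\c{A}\otimes\m{K}$, and your converse direction (replace $\alpha(1\otimes e_{11})$ by a Murray--von Neumann equivalent projection $p\in M_{k}(\c{A})$, observe that fullness survives equivalence, transport the corner isomorphism by $\r{Ad}(v)$, then read off traces) coincides with the paper's almost line by line. The genuine divergence is in the forward inclusion, where you use a different key lemma. The paper never extends the corner isomorphism: it manufactures an automorphism explicitly, $\alpha=(\psi_{k}\otimes\r{id}_{\c{A}})\circ\r{Ad}(v)\circ(\Phi\otimes\r{id}_{\m{K}})$, where $v$ is a multiplier partial isometry with $v^{*}v=p\otimes I$ and $vv^{*}=1$ supplied by Lemma 2.5 of \cite{BR}, and it verifies $(\tau\otimes\r{Tr})\circ\alpha(a\otimes e_{11})=(\r{Tr}_{k}\otimes\tau)(\Phi(a))$ by pushing the trace through each factor of the composition; this $\alpha$ does \emph{not} satisfy $\alpha(a\otimes e_{11})=\iota(\Phi(a))$, and the paper never needs it to. You instead invoke the stronger statement that an isomorphism between full corners of a $\sigma$-unital stable $C^{*}$-algebra extends to an honest automorphism, and then finish with uniqueness of the extension of a densely defined lower semicontinuous trace from a full corner. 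That extension statement is true, but it is not literally a theorem of \cite{BGR} or \cite{BR}; it needs either an absorption argument (Kasparov stabilization plus stability of $B$, showing $Be\cong \ell^{2}\otimes eBe$ as right Hilbert $eBe$-modules compatibly with the inclusion $eBe\subseteq Be$) or a citation such as \cite{kod1}, whose subject is exactly this correspondence. So your route buys a cleaner trace verification (no normalization bookkeeping through $\psi_{k}$ and $\r{Ad}(v)$) at the price of a heavier black box, while the paper's route rests only on Brown's lemma but pays with an explicit construction. Note also that your proof degrades gracefully: the weaker, trace-level identity that the paper's construction provides is all that your uniqueness-of-extension argument actually consumes, so even a skeptic of the strong extension result could splice the paper's construction into your second paragraph without changing anything else.
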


\begin{proof}
Let $p$ be a self-similar full projection and let $\Phi$ be an isomorphism from $\c{A}\rightarrow pM_{n}(\c{A})p$.  
Then $\Phi\otimes \r{id}_{\m{K}}:\c{A}\otimes \m{K}\rightarrow pM_{n}(\c{A})p\otimes \m{K}=(p\otimes I)M_{n}(\c{A})\otimes \m{K}(p\otimes I)$ is an isomorphism.  
Since $p$ is full, there exists a partial isometry $v\in \r{M}(M_{n}(\c{A})\otimes \m{K})$ such that $v^{*}v=p\otimes I$ and that $vv^{*}=1$ by lemma 2.5 on \cite{BR}.  
Put $\beta_{v}(x)=vxv^{*}$ for $x\in \r{M}(M_{n}(\c{A})\otimes \m{K})$ and $\alpha=(\psi_{n}\otimes \r{id}_{\c{A}})\circ\beta_{v}\circ(\Phi\otimes \r{id}_{\m{K}})$,  
where $\psi_{n}$ is an isomorphism from $M_{n}(\m{C})\otimes \m{K}$ onto $\m{K}$ and $\r{id}_{\c{A}}$ is an identity map on $\c{A}$.  
Then $\alpha$ is an automorphism on $\c{A}\otimes \m{K}$ and  $(\tau\otimes \r{Tr})\circ \alpha(a)=\tau\otimes \r{Tr}_{n}\circ \Phi(a)$ for any $a$ in $\c{A}$.  
Therefore  $\c{S}(\c{A})\supset \r{F}(\c{A})$.  
Conversely, let $\alpha$ be an automorphism on $\c{A}\otimes \m{K}$.  
Put $p=\alpha(1\otimes e_{11})$.  
Then there exists a projection $q$ in $M_{n}(\c{A})$ such that $p$ and $q$ are von Neumann equivalent.  
We define an isomorphism $\Psi:p(\c{A}\otimes \m{K})p\rightarrow q(M_{n}(\c{A}))q$ 
by $\Psi(a)=vav^{*}$, where $v$ is a partial isometry satisfying $v^{*}v=p$ and $vv^{*}=q$.  
Since $\Psi\circ \alpha$ is the isomorphism of the map from $\c{A}$ onto $q(M_{n}(\c{A}))q$, 
if $\Phi$ is the induced map by this,  $(\tau\otimes \r{Tr})\circ \alpha(a)=\tau\otimes \r{Tr}_{n}\circ \Phi(a)$ for any $a$ in $\c{A}$.  
Because $1\otimes e_{11}$ is full, $q$ is a full projection of $M_{n}(\c{A})$.   
Therefore  $\c{S}(\c{A})\subset \r{F}(\c{A})$.  
Hence $\c{S}(\c{A})=\r{F}(\c{A})$
\end{proof}

\begin{ex}\label{ex:scale}
Put $\c{A}=M_{2}(\m{C})\oplus M_{3}(\m{C})$ and $\c{B}=\m{C}\oplus \m{C}$. \\ 
Then $S(\c{A})=\r{F}(\c{A})=\Set{\left[ 
 \begin{array}{cc}
 1& 0 \\
 0& 1 \\
 \end{array} 
 \right], 
 \left[ 
 \begin{array}{cc}
 0& \cfrac{3}{2} \\
 \cfrac{2}{3}& 0 \\
 \end{array} 
 \right]}
$ and \\
$S(\c{B})=\r{F}(\c{B})=\Set{\left[ 
 \begin{array}{cc}
 1& 0 \\
 0& 1 \\
 \end{array} 
 \right], 
 \left[ 
 \begin{array}{cc}
 0& 1 \\
 1& 0 \\
 \end{array} 
 \right]}$ by \ref{ex:matrix}.  
 Although $\c{A}$ and $\c{B}$ are Morita equivalent, $\r{F}(\c{A})\neq \r{F}(\c{B})$.  
\end{ex}

\end{document}